\documentclass{article}[12pt]

\usepackage{amssymb, amsmath, amsthm, graphicx}

      \theoremstyle{plain}
      \newtheorem{theorem}{Theorem}[section]
      \newtheorem{lemma}[theorem]{Lemma}
      \newtheorem{corollary}[theorem]{Corollary}
      \newtheorem{proposition}[theorem]{Proposition}
      
      \theoremstyle{definition}
      \newtheorem{definition}[theorem]{Definition}
      
      \theoremstyle{remark}
      \newtheorem{remark}[theorem]{Remark}

\begin{document}

\title{A non-Markovian model of rill erosion}

\author{Michael Damron\thanks{Courant Institute of Mathematical Sciences, 251 Mercer St., New York, NY 10012, USA. Email: damron@cims.nyu.edu; Research supported in part by NSF grant number OISE-0730136.}
\and
C.L. Winter\thanks{National Center for Atmospheric Research, 1850 Table Mesa Dr., Boulder, CO 80305, USA.  Email: lwinter@ucar.edu}
}
\date{December 2008}
\maketitle

\footnotetext{MSC2000: Primary: 60K35, 82D99 Secondary: 60G09}
\footnotetext{** See arXiv:0704.2706 and arXiv:math/0702542}
\footnotetext{Keywords: erosion; rill erosion; P\'{o}lya urn; exchangeability; dynamical discrete web}

\begin{abstract}
We introduce a new model for rill erosion.
We start with a network similar to that in the Dynamical Discrete Web** and
instantiate a dynamics which makes the process highly non-Markovian.
The behavior of nodes in the streams is similar to the behavior of
Polya urns with time-dependent input.  In this paper we use a combination of rigorous arguments and simulation results to show that the model exhibits many properties of rill erosion; in particular, nodes which are deeper in the network tend to switch less quickly.
\end{abstract}


\section{Introduction}
\subsection{Reinforcement and Rill Erosion}

Stochastic processes with reinforcement are inherently non-Markovian and therefore may model some real phenomena more accurately than can their Markovian counterparts.  Reinforcement is a mechanism that provides a bias to a system, making it more likely to occupy states the more often those states are visited. Some well-studied examples include variations on the urn of P\'{o}lya (the original introduced in \cite{polya} and this and subsequent models studied, for example, in \cite{athreya} and \cite{friedman}) and reinforced random walks (\cite{diaconis, pemantle2}). The infinite memory exhibited in these examples can force a system to spend most (or almost all) of its time in a small subset of its state space.  Many natural phenomena exhibit similar behavior; for instance, the overall pattern of erosion on a hillslope is relatively stable once it is established, although small details of the pattern may change frequently and catastrophes that permanently alter it may occasionally occur.

We investigate a discrete time, infinite-memory random process defined on the nodes and edges of an oriented diagonal lattice (Figure \ref{nodenetworkfig}) that we propose as a simple model of hillslope erosion.  The lattice starts out smooth in the sense that it has no edges initially, but it sprouts edges everywhere the instant the process starts, much as rain can start soil erosion everywhere on a hillslope at once.  Edges may connect an interior node to two, one, or neither of the two nodes directly above it.  Exactly one edge descends from each interior node, and it points either left or right.  At every node and at every time step a simple two parameter reinforcing law, based on the entire history of the network above a given interior node, randomly determines the direction of the nodeÕs descending edge and then is updated.  Obvious modifications of these statements apply to nodes at the top or bottom (if one exists) of the lattice. 

The current pattern of connections among nodes represents the present state of the process, and the patternÕs stability -- measured by the tendency of the same state, or one similar to it, to occur on subsequent iterations of the process -- represents the patternÕs strength as a memory.  The degree of reinforcement is set by tuning two parameters, $r$ and $\alpha$.  At any given moment the current pattern is a collection of dendritic networks that appears similar to drainage networks found in nature; indeed, lattice models have often been used to investigate the morphology of natural drainage networks (e.g. \cite{RRI}).  We focus on the surficial dynamics of rill networks \cite{glock}, rather than their morphology.  Put in terms of erosion, we are more interested in the process of erosion than we are in the result. 

The analogy between our model and erosion, specifically rill erosion, is straightforward: $r$ can be interpreted as a rainfall rate (or equivalently, as the rate of sediment generation) and $\alpha^{-1}$ as the resistance of soil to erosion, while the reinforcement dynamics correspond to the overland flow of water and sediment down a hill.  Rills are small, ephemeral channels that transport sediment down hillslopes when it rains \cite{toy}. They form when rainfall and runoff dislodge particles from the soil surface and transport them along flow paths governed by variations in the surface roughness of soils and the soil's ability to resist erosion.  Flow depths in rills are typically on the order of a few centimeters or less, while the longest channels in rill networks can be several meters long.  Processes affecting rill erosion take place over timescales ranging from milliseconds to hours.

The topology of rill networks is relatively unstable when compared to larger scale natural drainage systems (of which rills may be a part) like gulley systems and river basins.  Rill networks are most unstable at their tops where boundaries between rills and inter-rill areas are not well defined and shift often, but connectivity can change downhill as well, usually at a slower rate than uphill.  Some rills grow throughout a rainfall event, others are filled by sediment and disappear, still others alternate.  A detailed description of rill erosion 1) must account for complicated interactions among rainfall, soil properties, and topography \cite{WBR}, and 2) often depends on obtaining a set of physical parameters that are difficult to measure.

Despite the high degree of complexity of rill erosion at small scales, at macroscopic scales it is principally determined by particle detachment, overland flow, and sediment transport (\cite{quinton}).  In turn, each of flow, detachment, and transport depends critically on the rate of rainfall and the soilÕs resistance to erosion.  It is not completely surprising that our simple two parameter model exhibits some important elements of the macroscopic behavior of rills formation.  In fact, similar to rill erosion, each node in the model network switches direction infinitely many times but the switching rate depends on position up or down hill.  Furthermore, floods that carry unusually large amounts of water and catastrophes that significantly alter the flow pattern occur occasionally in the model, as they do in nature.

\subsection{Definition of the Model}

Consider the vertices in the even sub-lattice of ${\mathbb Z}^2$ which have second coordinate non-positive.  That is, the set

\[ {\mathbb Z}_{even}^2 = \{(x,y) \in {\mathbb Z}^2 : x+y \textrm{ is even and } y \leq 0 \} \] 
and edges

\[ {\mathbb E}_{even}^2 = \{ <(x,y),(x+1,y-1)> : (x,y) \in {\mathbb Z}_{even}^2 \} \]

\[ \cup \{ <(x,y),(x-1, y-1)> : (x,y) \in {\mathbb Z}_{even}^2 \} \]
Let $v = (x,y)$ be a node with left parent $w_1=(x-1, y+1)$, right parent $w_2=(x+1, y+1)$ (for those nodes with second coordinate 0, parents will not exist), and with left child $(x-1,y-1)$, right child $(x+1,y-1)$.  We will use the term \textit{depth k} to refer to those nodes with $y$ coordinate equal to $1-k$.  Conversely, for any node $v$ the term $\textrm{depth}(v)$ will denote the numerical value of the depth of $v$.

First we describe the algorithm for the behavior of $v$ heuristically.  At the end of the $0^{th}$ second, $v$ receives $I_v(0) = r$ units of rain (but does nothing else).  During the $n^{th}$ second, for $n \geq 1$, the following sequence occurs:

\begin{enumerate}

\item $v$ flips a coin, heads-biased with probability $P_v^L(n)$, which reflects $T_v^L(n)$, the total "sediment" load $v$ has sent to its left child by time $n$.

\item If this coin shows heads (tails), $v$ sends its current input of sediment $I_v(n-1)$ to its left (right) child.  $v$ adds this number to the total sediment, $T_v^L(n)$ ($T_v^R(n)$), it has sent to the left (right) for all time.  

\item $v$ receives sediment load from 0,1, or 2 parents and receives $r$ units of rain.  Call the sum of these two $I_v(n)$.  Increment time and return to step 1.

\end{enumerate}

The evolution of the node's behavior depends on two parameters: the rainfall rate $r > 0$ and a term $\alpha^{-1} > 0$ that resists change.  To make this precise, we make several definitions.  We start by initializing variables.  For each $v \in {\mathbb Z}_{even}^2$, let 

\[ T_v^L(0) = T_v^R(0) = T_v(0) = 0 \]
and

\[ I_v(0) = r \textrm{ , } P_v^L(0) = P_v^R(0) = 1/2 \]
For each $n \geq 1$, $v \in {\mathbb Z}_{even}^2$, we define a Bernoulli variable, $ D_v^L(n)$, the biased coin, that is conditionally independent from vertex to vertex given the variables $\{D_v^L(i): v \in {\mathbb Z}_{even}^2, i < n \}$, with parameter $P_v^L(n-1)$.  Next, let

\[ T_v^L(n) = I_v(n-1)D_v^L(n) + T_v^L(n-1) \]

\[ T_v(n) = I_v(n-1) +T_v(n-1) \textrm{ , } T_v^R(n) = T_v(n) - T_v^L(n) \]
We create the bias for the next coin:

\begin{equation}
\label{leftprob}
P_v^L(n) = \frac{T_v^L(n) + \alpha}{T_v(n) + 2\alpha} = \frac{\frac{T_v^L(n)}{r} + \eta}{\frac{T_v(n)}{r} + 2\eta} 
\end{equation}
where $\eta = \alpha/r$ compares the effect of the rain to the system's inherent resistance to change.  In this paper, we shall always take $r=1$ so that $\eta = \alpha$.  Last we define the input

\[ I_v(n) = 
\begin{cases}
\hspace*{1.5in} r & \textrm{depth}(v) = 1 \\
I_{w_1}(n-1)(1-D_{w_1}^L(n)) + I_{w_2}(n-1)D_{w_2}^L(n) + r& \textrm{otherwise} \\
\end{cases}
\]
and the filtration 

\[ {\cal F}_n = \sigma(\{ D_v^L(k) : v \in {\mathbb Z}_{even}^2, k = 1,...,n \}) \]
See Figure \ref{nodenetworkfig} for an illustration of the process at node $v$.

\begin{figure*}
\begin{center}
\begin{tabular}{ c c }
\scalebox{.21}{\includegraphics*[viewport = 0in 1.4in 10in 9.75in]{./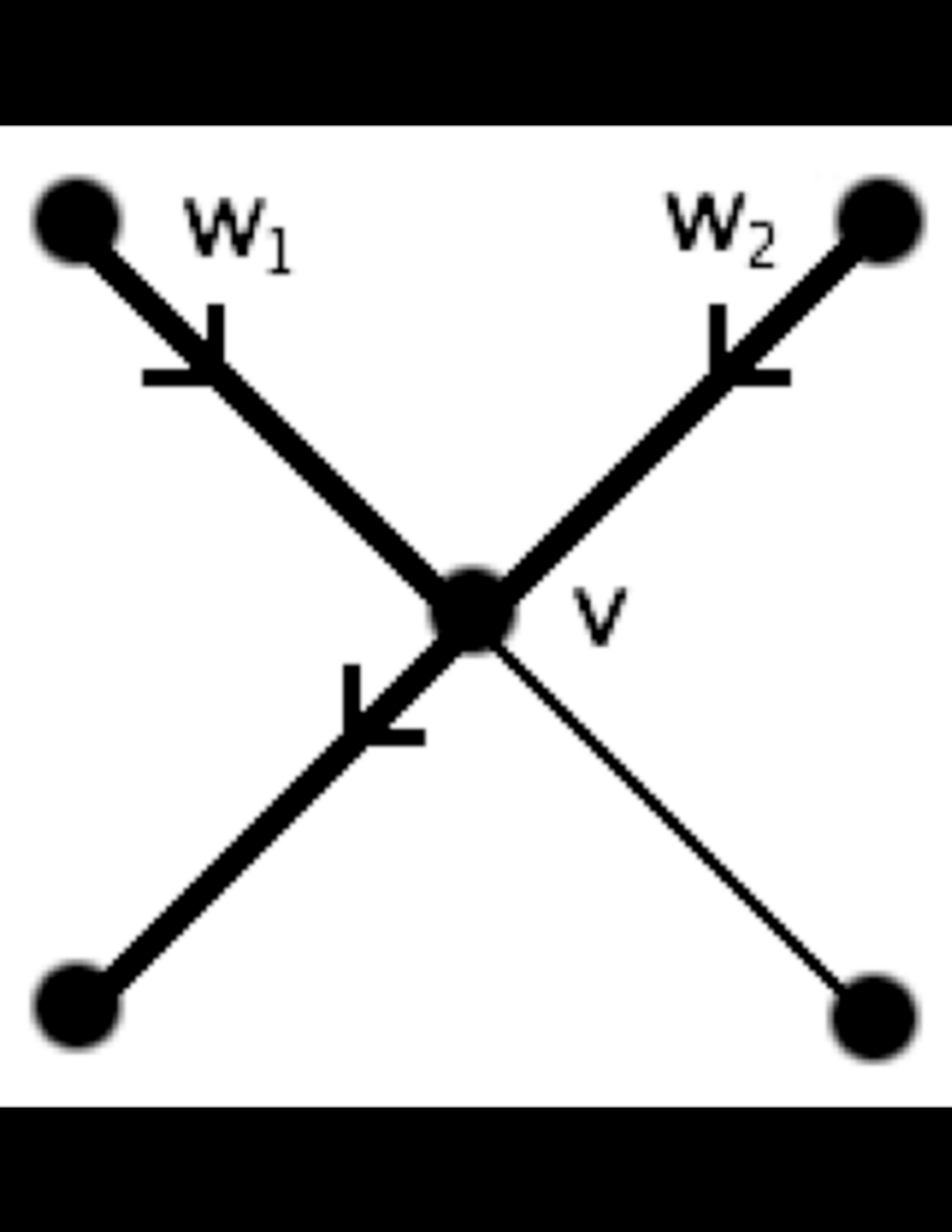}}
\scalebox{.35}{\includegraphics*[viewport = 0in 3in 5.5in 8in]{./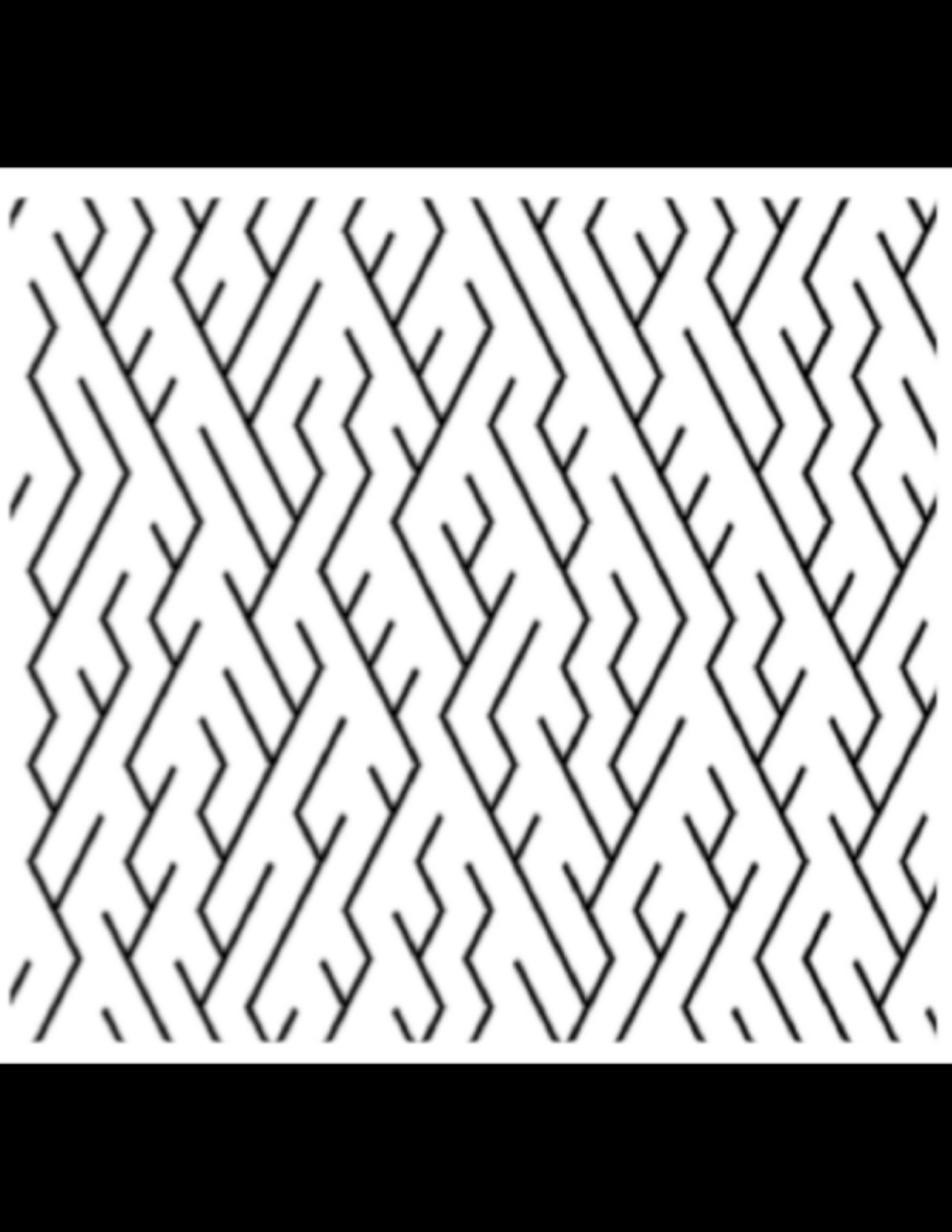}} &
\end{tabular}
\end{center}
\caption{Left: Input and output behavior at node $v$.  The darkened line segments indicate paths of sediment flow.  The light line represents a potential flow path.  Right: A small piece of a visualization of $
\omega_n$ for some $n$.  Only edges $e$ with $\omega_n(e) = 1$ are solid.}
\label{nodenetworkfig}
\end{figure*}

Denote by $d_v = (d_v(1), d_v(2), ...)$ the sequence of directions that node $v$ chooses (for example (L,R,L,...)).  At the end of time $t$, after all nodes have sent loads to their children, we may update certain edge variables.  Define a sequence of edge configurations $\{ \omega_n \}_{n \geq 0}$, where for each $n$, $\omega_n$ is a map from ${\mathbb E}_{even}^2 \to \{0,1\}$, using the following rule.  If the node $v=(x,y)$ has $d_v(n) = L$ then let 

\[ \omega_n(<x-1,y-1>) = 1 \textrm{ , } \omega_n(<x+1,y-1>) = 0 \]  
If, on the other hand, $d_v(n) = R$ then let 

\[ \omega_n(<x-1,y-1>) = 0 \textrm{ , } \omega_n(<x+1,y-1>) = 1 \]
See Figure \ref{nodenetworkfig} for a realization of $\omega_n$.

We say that nodes $v$ and $w$ are connected at time $n$ if there exists a path of distinct adjacent edges $e_1, .., e_m$ with $\omega_n(e_i)=1$ for all $i$ so that $e_1$ connects $v$ to one of its children and $e_m$ connects $w$ to one of its parents.  Denote by $C_{v,n}$ the set of vertices which are connected to $v$ at time $n$ and define the backward (uphill) component of $v=(x,y)$ at time $n$ by

\[ C_{v,n}^+ := C_{v,n} \cap \{(x',y'): y' \geq y \} \]
Finally, let $\omega_n^{-1} = \{ e: \omega_n(e) = 1 \}$.

\subsection{Regimes for $\eta$}

The parameter $\eta$ plays an important role in the behavior of the model.  For a fixed node $v$ (at depth $k$) we have that for all $n \geq 1$

\[ \lim_{\eta \to 0} {\mathbb P}(d_v(n) = L | d_v(0) = R) = 0 \]
This indicates that when $\eta$ is small the node $v$ chooses a direction at time 0 and has a high probability of sticking to this direction for most values of $n \geq 1$.  Since this is true for each node $v$, the evolution of $\{\omega_n\}$ is somewhat simple.  In the limit as $\eta \to 0$, each node picks a direction and stays with that direction for all time.  That is, for each $n \geq 1$, and for each finite subset $E \subset {\mathbb E}_{even}^2$,

\begin{equation}
\label{etazeroeq}
\lim_{\eta \to 0} {\mathbb P}(\omega_0^{-1}(1) \cap E = \omega_n^{-1}(1) \cap E) = 1 
\end{equation}
and the dynamics has no effect on the configuration in any finite subset of ${\mathbb Z}_{even}^2$.  The configurations at any moment are the same as those in the discrete web (\cite{arratia}, \cite{FNRS}). 

In the other direction, as $\eta \to \infty$ each node "forgets" its history.  That is, for each node $v$, the conditional probability given ${\cal F}_n$ that it chooses left at time $n+1$ is given in (\ref{leftprob}), and the limit of this quantity is $1/2$.  By symmetry,

\[ {\mathbb P}(d_v(n+1) = L) = 1/2 = {\mathbb P}(d_v(n+1) = R) \]
and so 

\[ \lim_{\eta \to \infty} \left[ {\mathbb P}(d_v(n+1) = L) - {\mathbb P}(d_v(n+1) = L | {\cal F}_n) \right] = 0. \] Therefore the variables in any finite subset of $\{d_v(n): n \geq 0\}$ converge in distribution to i.i.d. Bernoulli(1/2) variables.  Furthermore, the variables in any finite subset of $\{ d_v(n) : n \geq 0, v \in {\mathbb Z}_{even}^2\}$ converge in distribution to i.i.d. Bernoulli(1/2) variables.  Intuitively this holds because distinct nodes only interact with each other through their input and output loads, and both of these are eventually dominated by large $\eta$.  These statements indicate that when $\eta$ is large, the dynamics of our erosion model are similar to those in a network in which each node flips a fair coin at each time $n$, independently from site to site and from time to time, to determine in which direction to send sediment.  Thus the configurations $\{\omega_n\}$ resemble those taken from the dynamical discrete web (\cite{howittwarren}, \cite{FNRS}).

Given the relation both these extreme cases have to the variables $\{ \omega_n \}$, it is natural to view the present model (with $0 < \eta < \infty$) as an interpolation between the discrete web and the dynamical discrete web.  Indeed, for each fixed $n$, the distribution of $\omega_n$ is the same as that in the case of $\eta = 0$ at time $n=0$ or that in the case of $\eta \to \infty$ at any time $n$.  (In both cases, all directions are chosen by independent fair coin flips).  

As we shall see in section 3.2.2, the model with $0 < \eta < \infty$ can be likened to the case $\eta \to \infty$ in the following way.  Each level $k$ is associated to a measure $\theta_k$ (defined in eq. (\ref{definettidef})).  Each node at level $k$ samples (non-independently) from this measure a value $p_v$.  For any sequence $n_1, n_2, ..., n_m$ of times and for any sequence $x_1,...,x_m$ of elements from the set $\{L,R\}$,

\[ \lim_{T \to \infty} {\mathbb P}(d_v(n_1+T) = x_1, ..., d_v(n_m+T) = x_m) = p_v^{N_L} (1-p_v)^{N_R} \]
where $N_L$ ($N_R$) is the number of $i$ for which $d_i = L$.  Because of this fact, we may view the model (for large time) as one in which each node fixes a Bernoulli parameter $p_v$ and flips a $p_v$-biased coin independently each second $n$ (but not independently from site to site) to determine the direction in which to move sediment.

\subsection{Outline of the Paper}

In section 2 we discuss the (relatively simple) behavior of nodes at depth 1.  Since these nodes receive constant input load over time, we can use the well known model of P\'{o}lya's Urn to analyze their output.  In section 3 we discuss the more complicated behavior of nodes at depth at least 2.  Here we make use of results of Pemantle \cite{pemantle} for the time-dependent P\'{o}lya Urn.  We look more closely at properties of the input load, of the output load, and of the dynamics of these lower-depth nodes.

\section{Top Level}

Since our top level nodes are equivalent to the model of P\'{o}lya's urn, we recall basic facts of P\'{o}lya's model  Start with an urn containing $R_0$ red balls and $B_0$ black balls and draw one ball from the urn.  Return this ball to the urn, along with another ball of the same color.  After this round there are $R_1$ red balls in the urn and $B_1$ black balls in the urn, with either $R_0=R_1$ or $B_0=B_1$.  Repeat this process infinitely many times, creating sequences $\{R_n\}_{n \geq 0}$ and $\{B_n\}_{n \geq 0}$ so that for each $n$,

\[ {\mathbb P}(R_{n+1} - R_n = 1|R_{n}, B_{n}) = \frac{R_n}{R_n+B_n} \]

It is well known that the fraction $F_n^R = \frac{R_n}{R_n+B_n}$ has an almost sure limit and that this limit is distributed as $\beta(R_0,B_0)$ (see e.g. \cite{freedman}).

Let $v$ be a node at depth 1.  At the beginning of each second, $v$ receives an amount of sediment equal to 1 and this input load amount does not change with time.  The node sends this load either to the right or left, depending on the bias rule in (\ref{leftprob}).  We are interested in the fraction of total load the node sends left (right) up to time $n$.  To this end, define the load fraction

\[ LF_v^L(n) = \frac{T_v^L(n)}{T_v(n)} \textrm{ , } LF_v^R(n) = \frac{T_v^R(n)}{T_v(n)} \textrm{ , } n \geq 1 \]

\begin{theorem}
\label{rowonethm}
The quantities $LF_v^L(n)$ and $LF_v^R(n)$ have limits as $n \to \infty$.  These limits are random: they are distributed as $\beta (\eta, \eta)$.
\end{theorem}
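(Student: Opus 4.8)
The plan is to reduce the depth-$1$ dynamics \emph{exactly} to a (generalized) P\'{o}lya urn and then invoke almost-sure convergence of the urn fraction together with its de Finetti / beta description. The first observation is that for a node $v$ at depth $1$ the input is frozen: since $\mathrm{depth}(v)=1$ we have $I_v(n)=r=1$ for every $n\geq 0$. Consequently the recursions collapse to $T_v(n)=n$ and $T_v^L(n)=\sum_{i=1}^n D_v^L(i)$, while the bias rule (\ref{leftprob}) becomes
\[ P_v^L(n-1)=\frac{T_v^L(n-1)+\eta}{(n-1)+2\eta}. \]
Reading $T_v^L(n-1)+\eta$ as a count of ``left'' balls and $T_v^R(n-1)+\eta=(n-1)-T_v^L(n-1)+\eta$ as a count of ``right'' balls, this is precisely a P\'{o}lya urn started from $\eta$ balls of each colour, with one ball of the drawn colour added at each step. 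The only departure from the integer-initialized urn recalled above is that the initial composition $(\eta,\eta)$ need not be integral.

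Second, I would establish exchangeability of the draw sequence $(D_v^L(1),D_v^L(2),\dots)$ directly. Telescoping the conditional probabilities, the probability of any fixed left/right word of length $n$ containing exactly $k$ lefts equals
\[ \frac{\eta^{(k)}\,\eta^{(n-k)}}{(2\eta)^{(n)}}, \]
where $x^{(m)}=x(x+1)\cdots(x+m-1)$ denotes the rising factorial; crucially this depends only on $n$ and $k$, not on the order of the letters. Hence the sequence is exchangeable, and de Finetti's theorem supplies a random variable $\Theta$ such that, conditionally on $\Theta$, the $D_v^L(i)$ are i.i.d.\ Bernoulli$(\Theta)$. Applying the strong law of large numbers conditionally gives $LF_v^L(n)=T_v^L(n)/n\to\Theta$ almost surely, which yields the existence of the limit.

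Third, I would identify the law of $\Theta$. The displayed word-probability shows that the number of lefts in $n$ draws follows the Beta-Binomial$(n;\eta,\eta)$ law, whose de Finetti mixing measure is exactly $\beta(\eta,\eta)$; equivalently, one matches the moments $\mathbb{E}[\Theta^m]=\eta^{(m)}/(2\eta)^{(m)}$ against those of a $\beta(\eta,\eta)$ variable. Finally, $LF_v^R(n)=1-LF_v^L(n)\to 1-\Theta$, and since $\beta(\eta,\eta)$ is symmetric about $1/2$, the limit of $LF_v^R(n)$ is again $\beta(\eta,\eta)$-distributed.

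The main obstacle is not the algebra but the non-integer initialization: the textbook statement recalled above is phrased for integer $R_0,B_0$, so the cleanest rigorous route is the exchangeability/de Finetti argument, which is insensitive to integrality, rather than a direct appeal to the integer urn. One should also verify the elementary fact that replacing $T_v^L(n)/n$ by the urn fraction $(T_v^L(n)+\eta)/(n+2\eta)$ perturbs the quantity by $O(1/n)$, so the two share the same almost-sure limit; this is routine once convergence of the urn fraction is in hand.
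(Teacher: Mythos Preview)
Your argument is correct, but it takes a different route from the paper. The paper's proof is essentially two lines: it observes that $P_v^L(n)$ is a bounded martingale with respect to $\{\mathcal{F}_n\}$, invokes the martingale convergence theorem to get an almost-sure limit, and then defers the identification of the limiting law to the classical P\'{o}lya-urn literature (citing Feller); the passage from $P_v^L(n)$ to $LF_v^L(n)$ is the same $O(1/n)$ perturbation you note at the end. You instead bypass martingales entirely and go through exchangeability: you compute the word probabilities explicitly as $\eta^{(k)}\eta^{(n-k)}/(2\eta)^{(n)}$, read off exchangeability, apply de~Finetti to produce $\Theta$, and use the conditional strong law for convergence and the Beta--Binomial/moment identity for the law. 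Each approach has something to recommend it. The martingale route is shorter and reusable at deeper levels (the paper later shows $P_v^L(n)$ is a martingale for \emph{every} depth, where exchangeability fails outright), but it leaves the $\beta(\eta,\eta)$ identification as a black-box citation. Your route is self-contained and, as you rightly emphasize, is insensitive to whether $\eta$ is an integer, so it actually proves the beta limit rather than appealing to a result stated for integer initial compositions; on the other hand it is specific to the constant-input case and does not generalize to $k\geq 2$.
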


\begin{proof} We will indicate the proof only for the case $LF_v^L(n)$.  An easy calculation shows that $P_v^L(n)$ is a martingale w.r.t. $\{{\cal F}_n\}$ and, since it is bounded for all $n$, it has an almost sure limit.  Solving for the limiting distribution is similar to solving for the related quantity in the standard P\'{o}lya urn model.  See, for instance, \cite{feller}.  This gives

\[ \lim_{n \to \infty} P_v^L(n) = \lim_{n \to \infty} \frac{T_v^L(n) + \eta}{T_v(n) + 2\eta} = \lim_{n \to \infty} \frac{\frac{T_v^L(n)}{T_v(n)} + \frac{\eta}{T_v(n)}}{1 + \frac{2\eta}{T_v(n)}} \]
\begin{equation}
\label{pequalsl}
= \lim_{n \to \infty} \frac{T_v^L(n)}{T_v(n)} = \lim_{n \to \infty} LF_v^L(n) 
\end{equation}
because $\eta$ is constant w.r.t. $n$ and $T_v(n) \to \infty$.
\end{proof}

Note that the limiting distribution in Theorem \ref{rowonethm} is supported on [0,1] and has no atoms.  This implies that with probability 1, the node $v$ switches states (L,R) infinitely often and that neither of these states is transient.  This is quite unlike the "sticking" associated to the dynamics in the $\eta \to 0$ limit (refer to (\ref{etazeroeq})).  The distribution from the above theorem for different values of $\eta$ is pictured in Figure \ref{row1fig}.  For $0 < \eta < 1$ the limiting load fraction has a bimodal distribution, and for $\eta > 1$ the distribution is unimodal, symmetric about $\frac{1}{2}$.  This means that when $\eta$ is small, each node is likely to have a relatively strong preference for one direction and that when $\eta$ is large, each node is likely to favor L and R somewhat equally.  The case $\eta = 1$ gives a uniform distribution.  Here $v$ is equally likely to have a strong directional preference as it is not to.

\begin{figure*}
\begin{center}
\begin{tabular}{c c c}
$\eta = \frac{1}{2}$ & $\eta = 1$ & $\eta = 2$ \\
\scalebox{0.17}{\includegraphics*[viewport = 0in 2.65in 8.5in 8.35in]{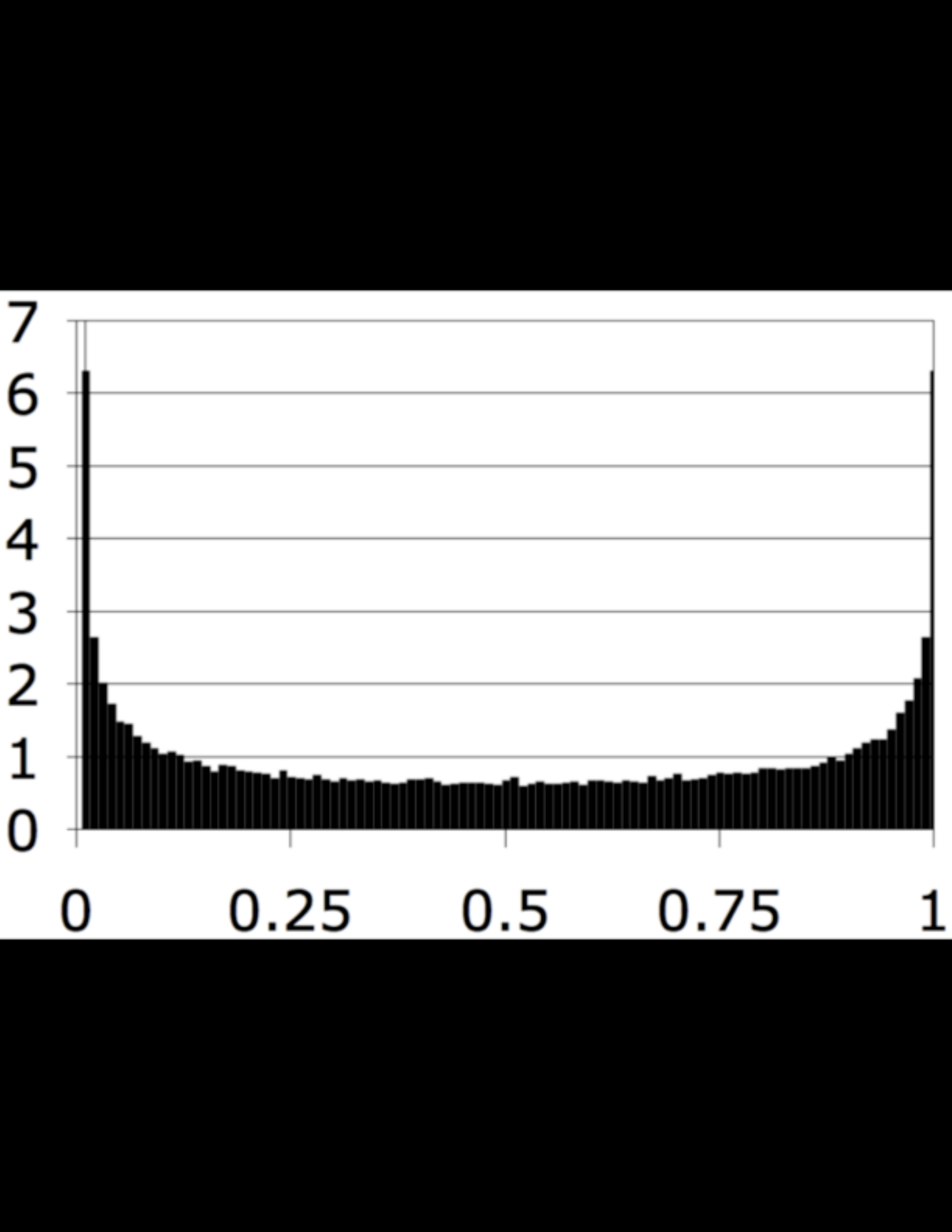}} &
\scalebox{0.17}{\includegraphics*[viewport = 0in 2.65in 8.5in 8.35in]{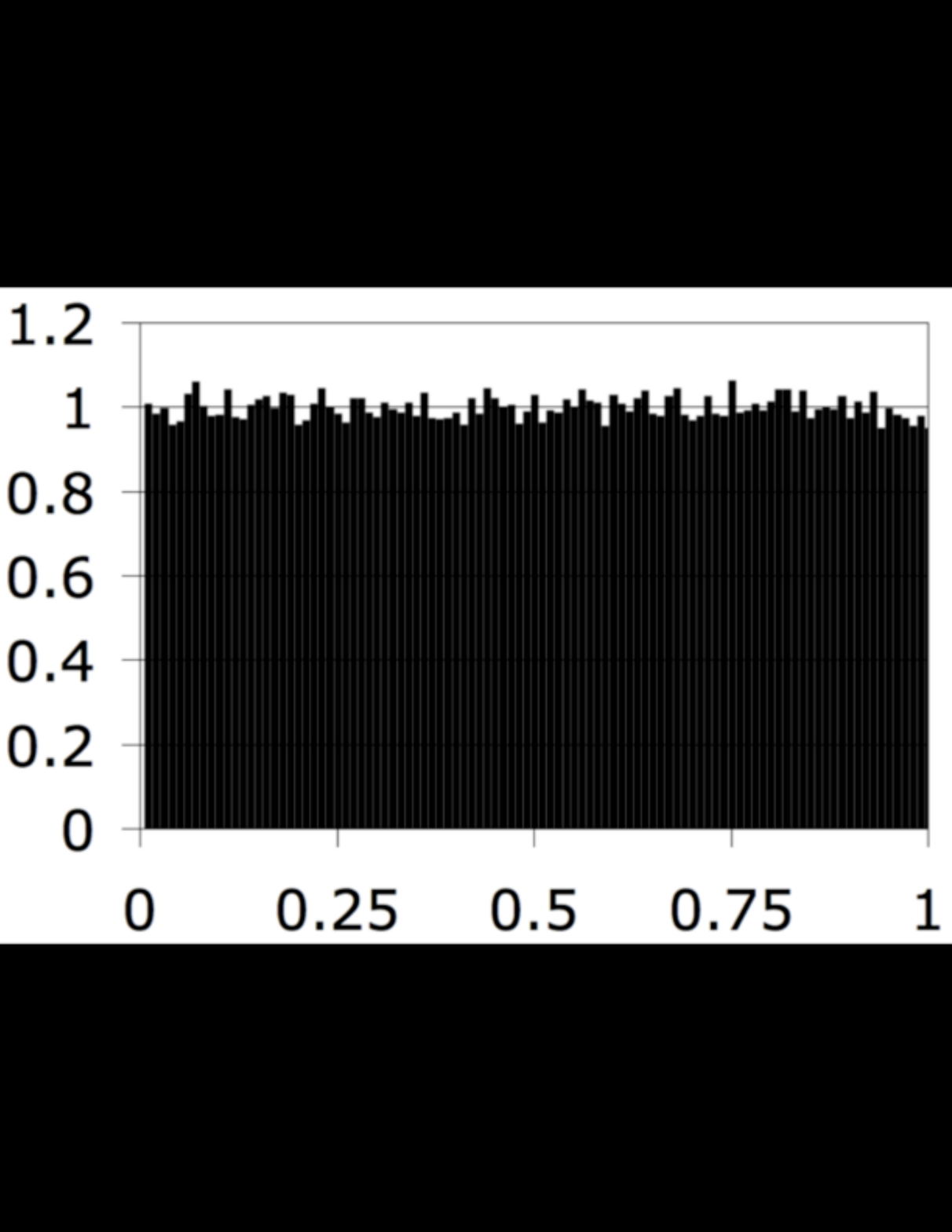}} &
\scalebox{0.17}{\includegraphics*[viewport = 0in 2.65in 8.5in 8.35in]{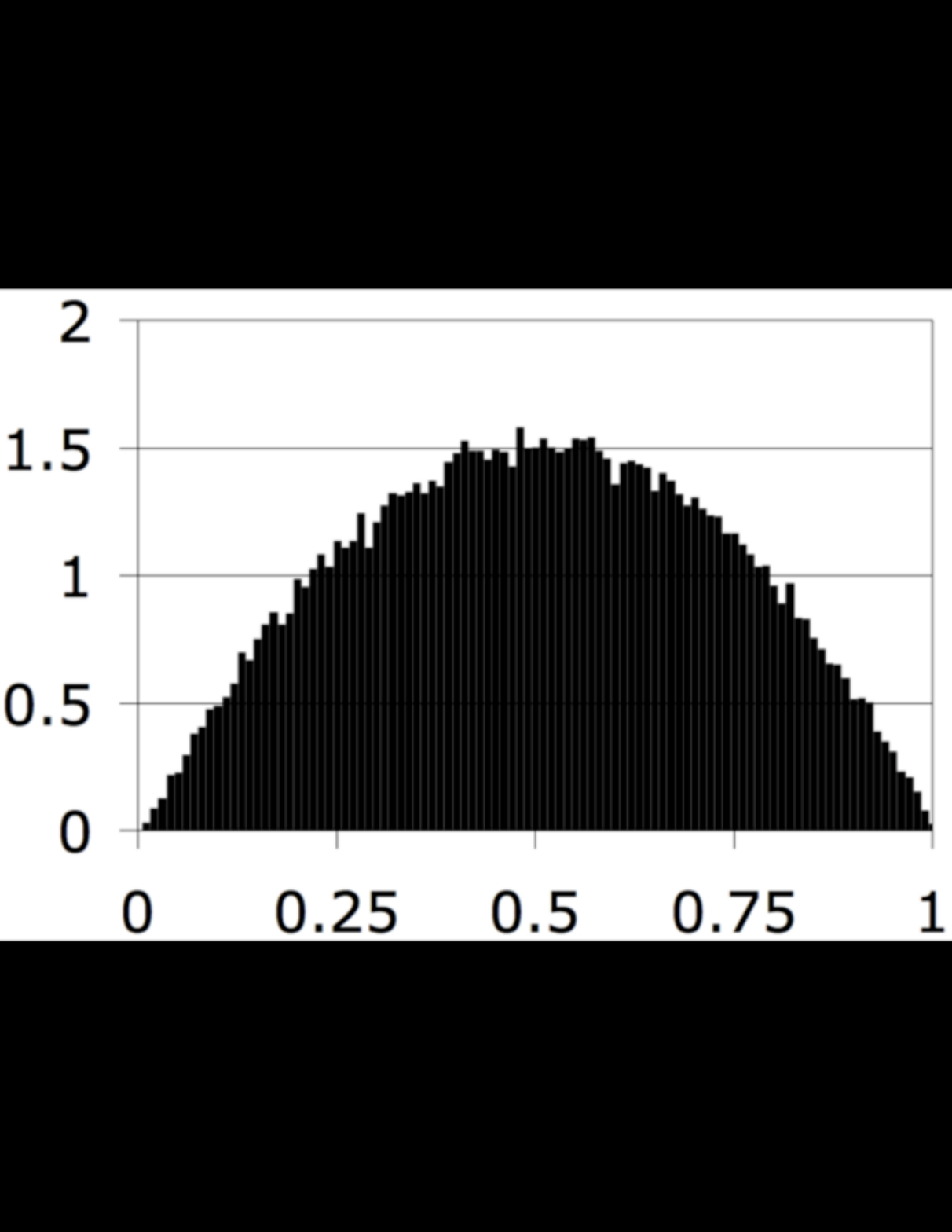}} \\
\end{tabular}
\end{center}
\caption{Asymptotic distributions for $LF_v^L(n)$: $\eta = .5, 1, 2$ respectively.}
\label{row1fig}
\end{figure*}

\section{Lower Levels}

The simplicity of behavior at the top level comes from the fact that each node has an input load which is constant w.r.t. time.  This is not true at lower levels.  Each node has an input load whose magnitude is non-trivially time dependent.  To make this more apparent, isolate an arbitrary node $v$ at depth $k$.  If at time $t=n$, $v$ is not connected to either of its parents in $\omega_n$, then its input load is 1 unit (coming only from rain).  If, on the other hand, $v$ is connected to at least one of its parents, then its input load will be strictly greater than 1 unit.  Therefore, the geometry of the connected components of $\omega_n$ determines the behavior of each node.  This relationship is complex for at least two reasons.  First, not only does the geometry of the network influence node behavior, the node behavior in turn determines the future geometry of the network.  In this sense, our system generates its own randomness.  Second, the method by which this randomness arises involves propagation.  The geometry of nodes at depth $k-l$ at time $m$ affects the behavior of nodes at depth $k$ at time $n$ if and only if $m = n-l$.  In other words, it takes $l$ seconds for the output load from depth $k-l$ to reach nodes at depth $k$.  In spite of these complications, we set out to analyze these lower level nodes.

The node $v$ has an input load sequence $I_v = (I_v(1), I_v(2), ...)$, left output load sequence $T_v^L = (T_v^L(1), T_v^L(2), ...)$, and output direction sequence $d_v = (d_v(1), d_v(2), ...)$.  We are interested in analyzing the nature of this input sequence, the nature of the output sequence, and the relationship between the two.

\subsection{Input Load}
Figure \ref{loadfig} shows a histogram of input load values for all nodes at (a) depth 5, (b) depth 7, and (c) depth 8 at $t= 300$s with $\eta = 1$ (the precise value of $\eta$ does not matter, as a consequence of Theorem \ref{loadthm1}).  The simulation was conducted with periodic boundary conditions, with $10^6$ nodes per row, and with 10 rows.  Therefore, the histogram for depth $k$ at time $n=300$s should closely approximate the probability mass function of the distribution of the input load for depth $k$ at time $n=300$s.  One notices a few things.  First, the support of the distribution at depth $k$ is integers in the interval $[1, \frac{1}{2}k(k+1)]$.  Next, the mass function appears to decrease from load value 1 to a local minimum at $k-1$, to increase for a bit to a local maximum, and then to decrease to the edge of its support.  About $1/4$ of nodes are at the heads of rills, while the fraction of rills starting short of the top increases with depth.  The "bump" in the load distribution to the right of the value $k-1$ appears to travel to the right as depth increases.  Looking at Figure \ref{loadfig}, it is tempting to guess that the load distribution at a given level is a mixture of a distribution for loads that start at the top and one for loads that do not.  Last, the different mass functions have several common values.  For example, the probabilities for load values 1 to 4 are the same in each figure, and the probabilities for load values 1 to 6 are the same in the center and right figures.

\begin{figure*}
\begin{center}
\begin{tabular}{c c c}
Row 5 & Row 7 & Row 8 \\
\scalebox{0.17}{\includegraphics*[viewport = 0in 2.65in 8.5in 8.35in]{./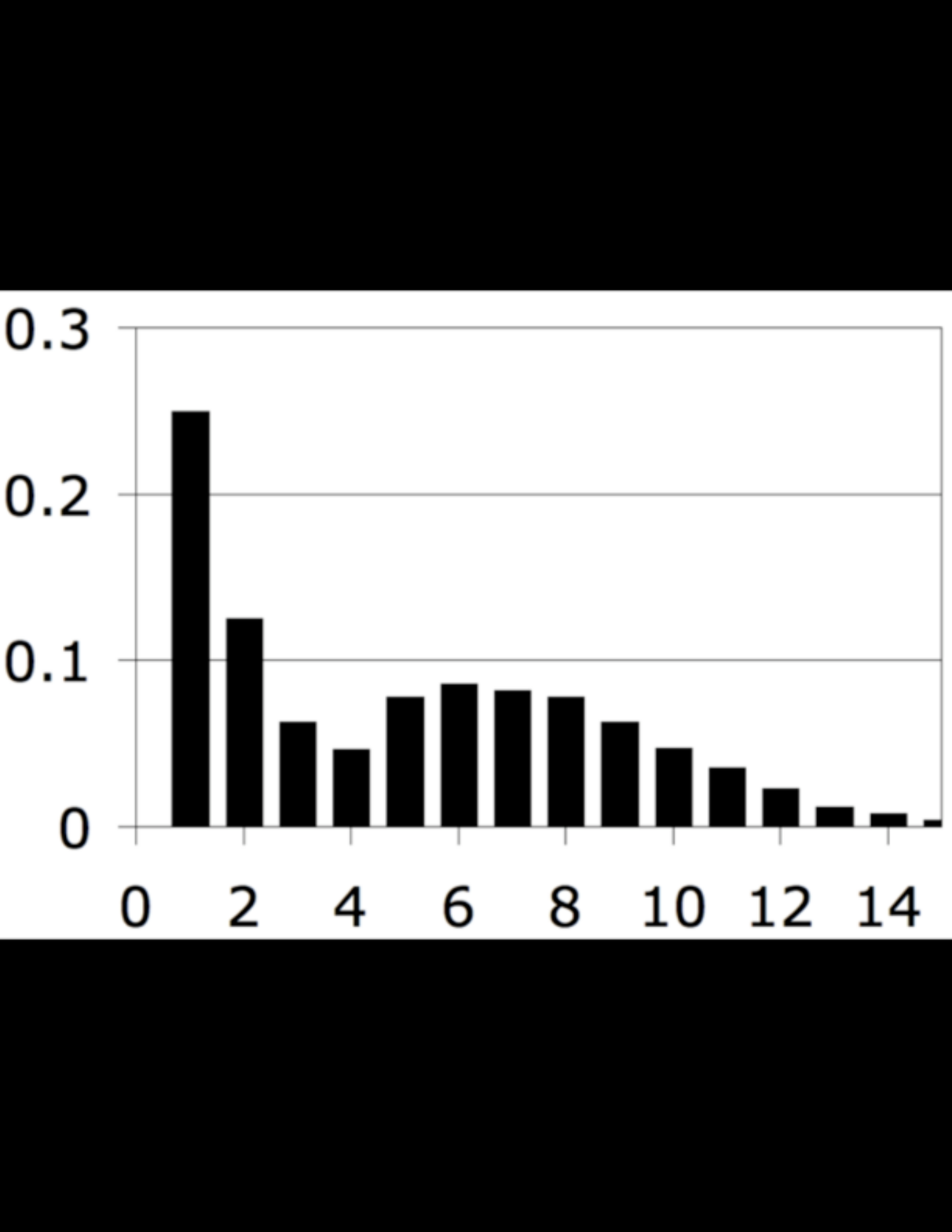}} &
\scalebox{0.17}{\includegraphics*[viewport = 0in 2.72in 8.5in 8.25in]{./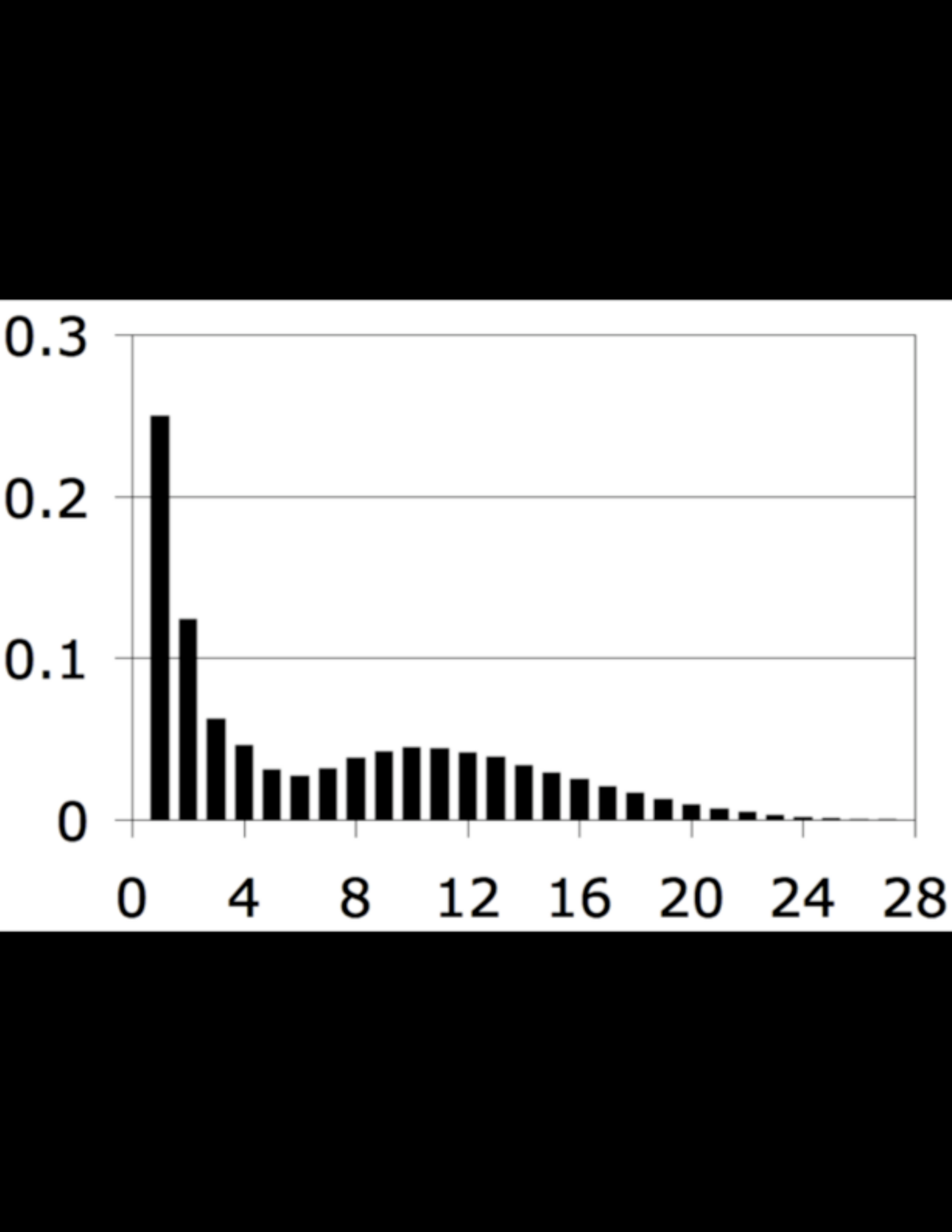}} &
\scalebox{0.17}{\includegraphics*[viewport = 0in 2.7in 8.5in 8.25in]{./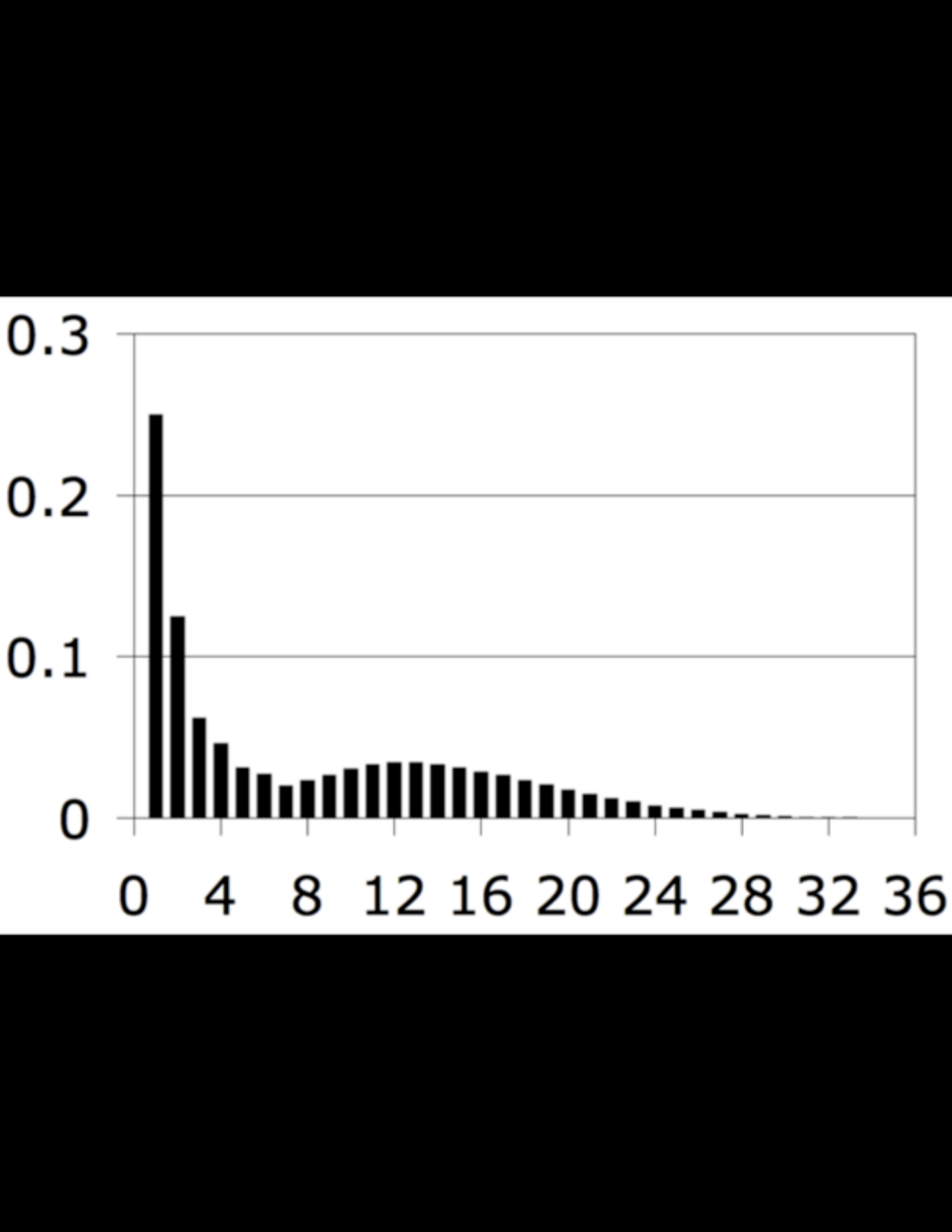}} \\
\end{tabular}
\end{center}
\caption{Load distribution for $k = 5, 7, 8$ respectively.}
\label{loadfig}
\end{figure*}

We present three structural theorems regarding the load distribution.  The first gives basic information needed to make calculations, and the second gives us the value of the first moment of the distribution.  The third discusses a limiting measure for the family of loads that do not originate at the top.  Because of the simplicity of the first theorem, we state it without proof.

\begin{theorem}
\label{loadthm1}
Basic properties of the load distribution.  Let $n_0$ be a fixed time and let $v$ be a node at depth $k$.

\renewcommand{\theenumi}{\alph{enumi}}
\begin{enumerate}

\item All random variables $d_v(n_0)$ are i.i.d. with probability $\frac{1}{2}$ of being L or R.

\item The distribution of $I_v(n_0)$ is laterally translation invariant (i.e. along the x-axis) and is invariant in time for $n_0 \geq k$.

\item The distribution of $I_v(n_0)$ is equal to the distribution of $|C_{w,n_0}^+|$ for any node $w$ with depth equal to min$(n_0, k)$.  Therefore $I_v(n_0)$ takes values in $[1,\frac{n_0(n_0+1)}{2}]$.

\end{enumerate}
\end{theorem}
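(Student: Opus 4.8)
The plan is to isolate three ingredients — the symmetries of the model, a martingale identity for the biases $P_v^L$, and a recursive ``peeling'' of nodes — and then read off (a), (b), (c) from them. Part (b)'s lateral invariance is immediate: the initialization and the update rules for $D_v^L$, $T_v^L$, $I_v$ are all invariant under the horizontal shift $(x,y)\mapsto(x+2,y)$, so the law of the process (in particular the law of $I_v(n_0)$) depends on $v$ only through $\mathrm{depth}(v)$. The remaining content of (b), time-invariance for $n_0\ge k$, I will deduce from (c) at the end.

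The key lemma is that, conditionally on the entire input sequence $I_v=(I_v(0),I_v(1),\dots)$, the bias $P_v^L(\cdot)$ is a bounded martingale with $P_v^L(0)=\tfrac12$. Indeed $I_v$ is a function of the coins $D_w^L$ strictly above $v$ and is therefore independent of $v$'s own coins $\{D_v^L(m)\}_m$; given $I_v$ the quantity $T_v(m)$ is known, $D_v^L(m+1)$ is conditionally Bernoulli$(P_v^L(m))$, and the same computation as in the proof of Theorem \ref{rowonethm} gives $\mathbb{E}[P_v^L(m+1)\mid I_v,\,D_v^L(1),\dots,D_v^L(m)]=P_v^L(m)$. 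Hence $\mathbb{E}[P_v^L(m)\mid I_v]=\tfrac12$ for every $m$, and in particular $\mathbb{E}[\mathbf{1}\{d_v(n_0)=L\}\mid I_v]=\tfrac12$. Taking expectations already yields the marginal statement $\mathbb{P}(d_v(n_0)=L)=\tfrac12$ of (a) (the reflection $x\mapsto -x$ composed with $L\leftrightarrow R$ gives an alternative route to this marginal).

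For the independence in (a) I will peel off nodes in order of decreasing depth. Fix $n_0$ and finitely many nodes $v_1,\dots,v_m$ with prescribed directions $x_1,\dots,x_m$. Conditioning on $\mathcal{F}_{n_0-1}$ and using conditional independence of the fresh coins gives $\mathbb{P}(\bigcap_i\{d_{v_i}(n_0)=x_i\})=\mathbb{E}[\prod_i q_i]$, where $q_i\in\{P_{v_i}^L(n_0-1),\,1-P_{v_i}^L(n_0-1)\}$. Choosing $v_1$ of maximal depth, its own coins $\{D_{v_1}^L(m)\}_m$ enter none of the other $q_j$, since a node's own coins influence only the directions of nodes strictly below it; so conditioning on all coins except those of $v_1$ and applying the key lemma replaces $q_1$ by its conditional mean $\tfrac12$, and iterating yields $(\tfrac12)^m$, i.e. joint independence. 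The same peeling works verbatim if the directions are taken at different times $d_{v_i}(m_i)$, because the lemma produces the factor $\tfrac12$ regardless of the time index.

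This staggered-time version is exactly what (c) needs. Unwinding the recursion for $I_v(n_0)$ with $r=1$ shows that $I_v(n_0)$ counts $v$ together with each ancestor $u$ whose unique downhill path to $v$ is active with the correct timing, a level-$l$ ancestor emitting along that path at time $n_0-l+1$; this is the same upward exploration that computes $|C_{w,n_0}^+|$, but with the direction of a level-$l$ node read at time $n_0-l+1$ rather than at the common time $n_0$. Because the load advances one level per second the exploration reaches a depth governed by $\min(n_0,k)$, matching the rows of $C_{w,n_0}^+$ for $w$ at depth $\min(n_0,k)$, and by the staggered-time independence the directions it queries are i.i.d.\ Bernoulli$(\tfrac12)$ with the same dependency graph as those defining $|C_{w,n_0}^+|$; hence the two cluster sizes have the same law and the stated range follows. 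Finally, since the law of $\{d_u(n_0)\}_u$ does not depend on $n_0$, neither does that of $|C_{w,n_0}^+|$, giving time-invariance of $I_v(n_0)$ for $n_0\ge k$ and completing (b). The main obstacle is the independence in (a): the biases $P_v^L(n_0-1)$ at distinct nodes are genuinely correlated through shared ancestors, and it is only the martingale identity — the exact cancellation making each conditional mean $\tfrac12$ — that forces these correlations to vanish; the secondary delicate point is the bookkeeping of propagation delays that pins down the cluster depth $\min(n_0,k)$.
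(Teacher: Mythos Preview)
The paper states this theorem without proof (``Because of the simplicity of the first theorem, we state it without proof''), so there is no paper argument to compare against. Your proposal supplies a complete and correct proof, and the route you take---the martingale identity $\mathbb{E}[P_v^L(m)\mid I_v]=\tfrac12$ followed by a depth-peeling argument---is the natural one.

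Two remarks. First, your phrase ``conditioning on all coins except those of $v_1$'' should be read as conditioning on the primitive randomness (e.g.\ auxiliary uniforms $U_w(m)$ with $D_w^L(m)=\mathbf{1}\{U_w(m)<P_w^L(m-1)\}$) of all nodes other than $v_1$; the indicator coins $D_w^L(m)$ of nodes strictly \emph{below} $v_1$ can depend on $v_1$'s choices, so the $\sigma$-algebra generated by $\{D_w^L(m):w\neq v_1\}$ is not quite the right object. With the uniforms this is clean: $I_{v_1}$ and all $q_j$ with $j\ge2$ are measurable with respect to $\{U_w:w\neq v_1\}$, and conditionally on that $\sigma$-algebra the $U_{v_1}(m)$ remain i.i.d.\ uniform, so the martingale identity yields $\mathbb{E}[q_1\mid\cdot]=\tfrac12$ as you claim. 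Second, the paper's statement of (c) carries an off-by-one ambiguity for small times (unwinding the recursion gives an active tree with $\min(n_0+1,k)$ levels rather than $\min(n_0,k)$; e.g.\ $I_v(1)\in\{1,2,3\}$ for $v$ at depth $2$), but this is an artifact of the statement, not of your argument, and it disappears in the regime $n_0\ge k$ that the paper actually uses.
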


\begin{theorem}
\label{loadthm2}
Let $v$ be a node at depth $k$.  The mean of the load distribution is

\[ {\mathbb E} (I_v(n)) = 
\begin{cases}
n & n \leq k \\
k & n > k 
\end{cases} \]

\end{theorem}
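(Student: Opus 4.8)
The plan is to set $m_k(n) := \mathbb{E}(I_v(n))$ for a node $v$ at depth $k$ — this is well defined independently of the horizontal position by the lateral translation invariance of Theorem \ref{loadthm1}(b) — and to reduce everything to the one-step recursion
\[ m_k(n) = m_{k-1}(n-1) + 1 \qquad (k \ge 2,\ n \ge 1), \]
together with the base values $m_1(n) = 1$ for $n \ge 1$ (a depth-$1$ node has constant input $I_v(n) = r = 1$). Granting the recursion, the formula follows by unrolling: for $n \ge k$ one iterates $k-1$ times down to the top level, reaching $m_1(n-k+1) = 1$ and accumulating $k-1$ copies of the rain term, for a total of $k$; for $n < k$ one instead invokes Theorem \ref{loadthm1}(c), which (information having propagated only $n$ levels) identifies the law of $I_v(n)$ with that of the input of a depth-$n$ node, reducing the case $n<k$ to the case ``$n=k$'' with $k$ replaced by $n$ and giving $n$.

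To derive the recursion I take expectations in the defining identity
\[ I_v(n) = I_{w_1}(n-1)\bigl(1 - D_{w_1}^L(n)\bigr) + I_{w_2}(n-1) D_{w_2}^L(n) + r, \]
where $w_1,w_2$ are the depth-$(k-1)$ left and right parents of $v$ and $r=1$. The crux, and what I expect to be the only real obstacle, is evaluating cross terms such as $\mathbb{E}\bigl[I_w(n-1) D_w^L(n)\bigr]$: the input $I_w(n-1)$ and the current bias $P_w^L(n-1)$ are correlated, so one may not simply factor. I would handle this by conditioning on $\mathcal{F}_{n-1}$. Since $I_w(n-1)$ is $\mathcal{F}_{n-1}$-measurable and $\mathbb{E}\bigl[D_w^L(n) \mid \mathcal{F}_{n-1}\bigr] = P_w^L(n-1)$, the tower property gives $\mathbb{E}\bigl[I_w(n-1) D_w^L(n)\bigr] = \mathbb{E}\bigl[I_w(n-1) P_w^L(n-1)\bigr]$.

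It remains to show this equals $\tfrac12 m_{k-1}(n-1)$, and here I would use the left--right reflection symmetry of the model. Reflecting the lattice about a vertical axis preserves the law, fixes each input load (a direction-blind total), and swaps $P_w^L \leftrightarrow 1 - P_w^L$; hence $\mathbb{E}\bigl[I_w(n-1) P_w^L(n-1)\bigr] = \mathbb{E}\bigl[I_w(n-1)(1 - P_w^L(n-1))\bigr]$, forcing each to equal $\tfrac12 \mathbb{E}\bigl[I_w(n-1)\bigr] = \tfrac12 m_{k-1}(n-1)$. Equivalently, and more transparently, a depth-$(k-1)$ node forwards its \emph{entire} input each step, so its left- and right-outputs sum to $I_w(n-1)$ and, by symmetry, have equal means; thus each mean is $\tfrac12 m_{k-1}(n-1)$. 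Adding the right-output of $w_1$, the left-output of $w_2$, and the rain $r=1$ yields $m_k(n) = \tfrac12 m_{k-1}(n-1) + \tfrac12 m_{k-1}(n-1) + 1$, which is the recursion.

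As an independent cross-check that also sidesteps the recursion's boundary bookkeeping, one can compute directly via Theorem \ref{loadthm1}(c): in the regime where the directions $d_v(\cdot)$ are i.i.d.\ fair (Theorem \ref{loadthm1}(a)), $I_v(n)$ equals in distribution the uphill cluster size $|C_{w,n}^+|$ of a node $w$ at depth $\min(n,k)$, and $\mathbb{E}|C_w^+| = \sum_j \mathbb{E}[\#\{\text{connected ancestors } j \text{ levels up}\}]$. Expanding the count at level $j$ as a sum over the $2^j$ downward length-$j$ flow paths ending at $w$ — each realized with probability $2^{-j}$ — shows level $j$ contributes exactly $1$, so the sum over the $\min(n,k)$ admissible levels recovers $\min(n,k)$.
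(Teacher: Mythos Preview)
Your main argument is essentially the paper's argument: both derive the one-step recursion $m_k(n)=m_{k-1}(n-1)+1$ by looking at what the two parents deliver and then induct. The paper phrases the symmetry step as ``$N_{v,k-1}\in\{0,1,2\}$ with probabilities $1/4,1/2,1/4$'' and writes $\mathbb{E}(I_v(n)\mid N_{v,k-1}=i)$ as $1$ plus unconditional parent means, implicitly invoking Theorem~\ref{loadthm1}(a) (the i.i.d.\ fair-coin picture at a fixed time) to decouple the parents' directions from their inputs; you instead take expectations directly in the defining identity and make the decorrelation explicit via the left--right reflection, which is a cleaner justification of the same fact. Your second, cluster-size computation (each of the $\min(n,k)$ levels contributes expected mass $1$ by counting $2^{j}$ paths of probability $2^{-j}$) is a genuinely different and more direct route that the paper does not take; it bypasses the induction entirely and gives the formula in one line once Theorem~\ref{loadthm1}(c) is in hand.
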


\begin{proof}
We prove by induction on $k$.  For $k=1$, the statement is trivial, so consider $k>1$.  Since the distribution of $I_v(n)$ is constant for $n \geq k$, we assume $n \leq k$.  Let $N_{v,k-1}$ be the number of nodes at level $k-1$ which send sediment to $v$ at the end of time $n-1$.  This variable takes values in $\{0,1,2\}$ with probabilities $\{1/4, 1/2, 1/4\}$, respectively.  Call $w_1$ $(w_2)$ the left (right) parent of $v$.

\[ {\mathbb E}(I_v(n)) = \sum_{i=1}^2 {\mathbb E}(I_v(n) | N_{v,k-1}=i) {\mathbb P}(N_{v,k-1}=i) \]

\[ = 1/4 + 1/2 \left[ 1+ {\mathbb E}(I_{w_1}(n-1)) \right] + 1/4 \left[ 1+ {\mathbb E}(I_{w_1}(n-1) + I_{w_2}(n-1)) \right] \]

\[ = 1 + {\mathbb E}(I_{w_2}(n-1)) = 1 + (n-1) = n \]
where to go from the second line to the third line, we use the fact that the variables $I_{w_1}(n-1)$ and $I_{w_2}(n-1)$ have the same distribution (see b. under Theorem \ref{loadthm1}).
\end{proof}

Theorem \ref{loadthm1} lets us use geometric properties of clusters of a static network ($\omega_{n_0}$) to study something which is dynamic: the load at time $n$ at node $v$.  That load may have come from a pathway that no longer even exists at time $n$.  We further exploit this relationship, but to do this we must consider the concept of the dual web, defined in, for example, \cite{FNRS}, and of whose definition we remind the reader.  Consider the odd sublattice

\[ {\mathbb Z}_{odd}^2 = \{ (x,y) \in {\mathbb Z}^2 : x+y \textrm{ odd and } y \leq 1 \} \]

For any node $v^* = (x^*,y^*) \in {\mathbb Z}_{odd}^2$ we call the node $(x^*+1,y^*+1)$ the right child of $v^*$ and we call the node $(x^*-1,y^*+1)$ the left child of $v^*$.  Similarly, we call the node $(x^*+1, y^*-1)$ the right parent of $v^*$ and we call the node $(x^*-1,y^*-1)$ the left parent of $v^*$.  We define the set ${\mathbb E}_{odd}^2$ in the obvious way.  The set of configurations $\{ \omega_n : n \geq 0 \}$ induces a set of configurations $\{ \omega_n^* : n \geq 0 \} \subset \{0,1\}^{{\mathbb E}_{odd}^2}$ by the following rule.  If, in the configuration $\omega_n$, a node $v=(x,y)$ is connected to its left child, we form a connection between the node $v^* = (x, y-1)$ and its right child in the configuration $\omega_n^*$ by setting the image under $\omega_n^*$ of the edge in ${\mathbb E}_{odd}^2$ between $v^*$ and its right child to 1, and the image of the edge between $v^*$ and its left child to 0.  If, on the other hand, $v$ is connected to its right child in $\omega_n$ then we set the image of the edge from $v^*$ to its left child under $\omega_n^*$ to 1 and the image of the edge from $v^*$ to its right child to 0.  See Figure \ref{dualwebfig} and notice that we construct clusters in $\omega_n^*$ so that no occupied edges in $\omega_n^*$ cross any occupied edges in $\omega_n$.

\begin{figure*}
\begin{center}
\scalebox{0.35}{\includegraphics*[viewport = 0in 2.2in 8.5in 8.8in]{./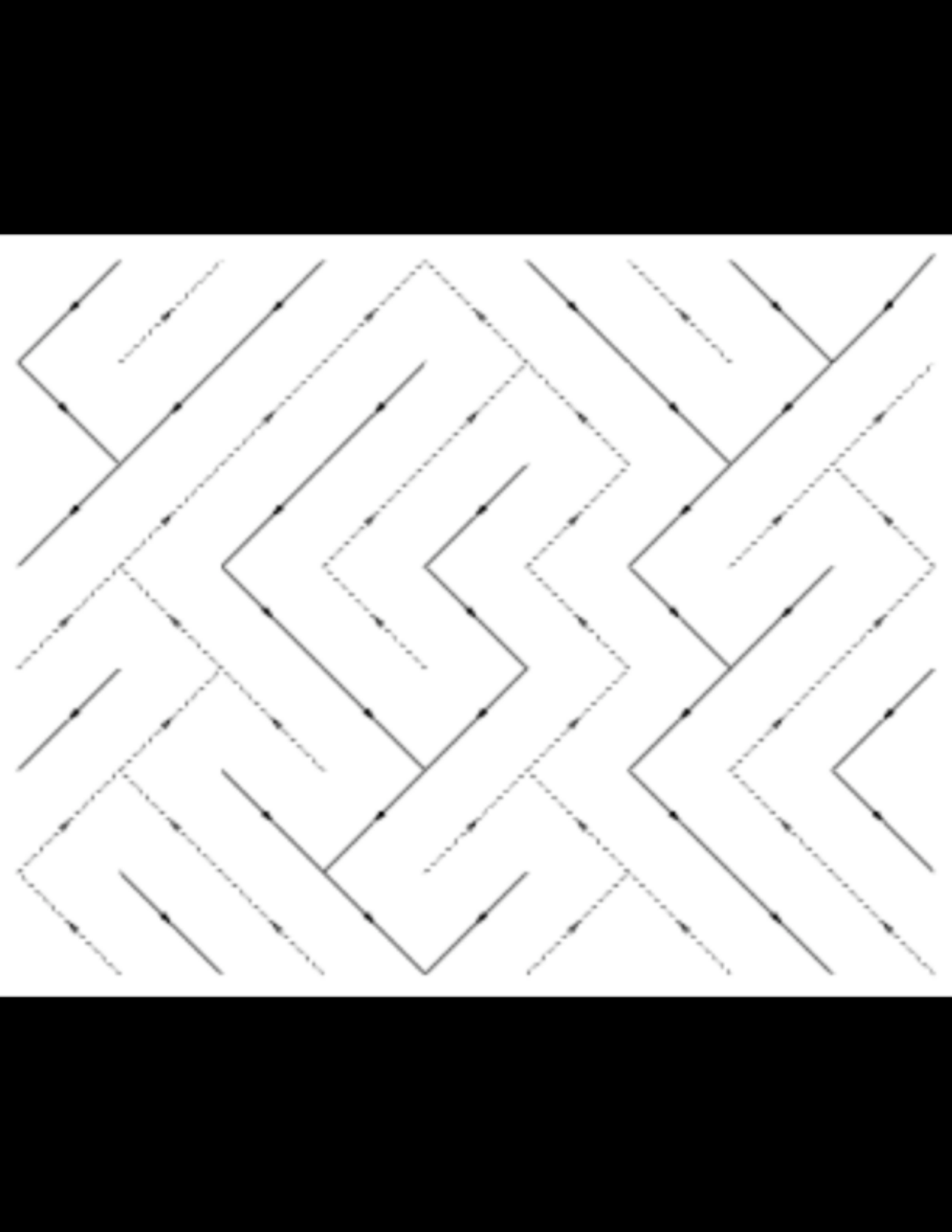}} 
\end{center}
\caption{Portion of an realization of the erosion network, along with its dual web.  The solid lines indicate paths of sediment flow and the dotted lines show paths of the dual (courtesy of \cite{FINR}).}
\label{dualwebfig}
\end{figure*}

The upward paths in $\omega_n^*$ now resemble the downward paths in $\omega_n$.  That is, the upward path starting at the node $v^*$ is a simple symmetric random walk which is killed at depth 1.  Random walks starting at different nodes are independent until they meet, at which point they coalesce into one random walk.  (This is similar to the coalescing random walks picture of the discrete web, described in \cite{arratia}, \cite{howittwarren}, \cite{FNRS}.)

There is an obvious physical interpretation for the paths in the dual web.  For any two adjacent paths in the configuration $\omega_n$, there is a path in $\omega_n^*$ separating them.  If the paths in $\omega_n$ represent rills or drains, the paths in $\omega_n^*$ represent the divides or ridges between them.  Just as divides between rills do not cross rills, paths in $\omega_n^*$ do not cross paths in $\omega_n$.

We now characterize the load distributions for our model.  For any node $v=(x,y)$ (with depth $k$), let $v_L^*=(x-1,y)$ and let $v_R^*=(x+1,y)$.  Consider the set of edges in the dual lattice contained in the paths emanating from the vertices $v_R^*$ and $v_L^*$ in $\omega_n^*$ until either (a) they meet at some vertex $w^*$ or (b) they reach a depth of 1.  The set of nodes in ${\mathbb Z}_{even}^2$ in the interior of this set of edges is exactly the backward cluster of $v$ in the configuration $\omega_n$.  

We now make some definitions so that we can work with this load distribution.  Let $\{ X_i^L : i \geq 2 \}$ and $\{ X_i^R : i \geq 2 \}$ be independent sets of random variables (also independent of each other) which take the values 1 and -1 each with probability $\frac{1}{2}$.  For $i \geq 2$ let $Y_i =\frac{1}{2}( X_i^R - X_i^L)$ and for $i \geq 1$, let $W_i = 1 + Y_2 + ... + Y_i$.  Consider the stopping time 

\[ \tau = \min \{ n: W_n = 0 \} \]
Up until the stopping time $\tau$, the random variable $W_i$ represents the width of the backward cluster of the node $v$ in the real lattice (only valleys and not separating ridges), where we only consider nodes in this cluster whose depths are between $k -i +1$ and $k$.  Therefore the total number of nodes in this partial cluster should be 

\[ L_i := W_1 + ... + W_i \]

Now we can make an equivalent definition of the distribution of the load  $I_v(n)$ by saying that for each fixed $n$, it is the same as the distribution of the random variable 

\begin{equation}
\label{equivdefeq}
L_k(n) := L_{\min(\tau, n, k)}
\end{equation}
This variable is essentially a discrete integral of the symmetric random walk $\{ W_i : i \geq 1 \}$.

\begin{theorem}
\label{loadthm3}
Let $v$ be a node at depth $k_v$ and let $w$ be a node at depth $k_w \geq k_v$.  For any $n \geq k_v$ and for any $l < k_v$ we have 

\[ {\mathbb P}(I_v(n) = l) = {\mathbb P}(I_w(n) = l) \]
Therefore the limit

\begin{equation}
\label{loaddistlimiteq}
\lim_{k_v \to \infty} I_v(k_v)
\end{equation}
exists in distribution.  This limit is a.s. finite but has infinite mean.
\end{theorem}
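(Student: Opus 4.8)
The plan is to work entirely with the random-walk representation established just before the statement: for a node at depth $k$ and any fixed $n$, $I_v(n)$ has the same law as $L_{\min(\tau,n,k)}$, where $W_1,W_2,\dots$ is the symmetric lazy walk with $W_1=1$ and mean-zero increments $Y_i\in\{-1,0,1\}$, where $\tau=\min\{n:W_n=0\}$ is its first hitting time of $0$, and where $L_i=W_1+\cdots+W_i$ is its running integral. All three assertions will be read off from the single object $L_\tau$.

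First I would prove the equality of point masses. The key elementary fact is that the walk stays at height at least $1$ before $\tau$ and that its increments are $\pm1$ or $0$, so $W_i\ge 1$ for $i<\tau$ and $W_\tau=0$; consequently $L_i\ge i$ for $i\le \tau-1$ and $L_\tau=L_{\tau-1}\ge \tau-1$. From this I obtain the event identity: for any cutoff $m$ and any $l<m$,
\[ \{\,L_{\min(\tau,m)}=l\,\}=\{\,L_\tau=l\,\}. \]
Indeed, if $L_\tau=l<m$ then $\tau\le l+1\le m$, so $\min(\tau,m)=\tau$; conversely if $\tau>m$ then $L_{\min(\tau,m)}=L_m\ge m>l$, which rules out the value $l$. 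Applying this with $m=k_v$ for $v$ (using $\min(\tau,n,k_v)=\min(\tau,k_v)$ since $n\ge k_v$) and with $m=\min(n,k_w)\ge k_v$ for $w$, both $\mathbb{P}(I_v(n)=l)$ and $\mathbb{P}(I_w(n)=l)$ equal $\mathbb{P}(L_\tau=l)$ whenever $l<k_v$, which is the claimed equality.

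Next I would deduce the distributional limit. Since $W$ is a mean-zero, finite-variance walk on $\mathbb{Z}$ it is recurrent, so $\tau<\infty$ almost surely and $L_\tau=\sum_{i=1}^{\tau-1}W_i$ is a finite sum, hence finite a.s. Taking $n=k_v$, we have $I_v(k_v)\overset{d}{=}L_{\min(\tau,k_v)}$, and on the common probability space of the single walk $W$ this equals $L_\tau$ as soon as $k_v>\tau$; thus $L_{\min(\tau,k_v)}\to L_\tau$ almost surely (the sequence is eventually constant), giving convergence in distribution to the law of $L_\tau$. Equivalently, one may argue directly from the first part together with the time-invariance in Theorem \ref{loadthm1}(b): for each fixed $l$ the mass $\mathbb{P}(I_v(k_v)=l)$ is constant once $k_v>l$, and a.s. finiteness of $L_\tau$ guarantees $\sum_l \mathbb{P}(L_\tau=l)=1$, so no mass escapes to infinity.

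Finally, the infinite mean is the cleanest part but is the step I would treat most carefully, since the limiting variable is defined only through an a.s. limit and its mean is \emph{not} the limit of the means (those equal $k_v\to\infty$ by Theorem \ref{loadthm2}, so Fatou gives only an unhelpful inequality). I would instead compute $\mathbb{E}[L_\tau]$ by optional stopping. Writing $L_\tau=\sum_{i\ge 1}W_i\mathbf{1}_{\{\tau>i\}}$ and using Tonelli,
\[ \mathbb{E}[L_\tau]=\sum_{i\ge 1}\mathbb{E}\!\left[W_i\mathbf{1}_{\{\tau>i\}}\right]=\sum_{i\ge 1}\mathbb{E}\!\left[W_{i\wedge\tau}\right]=\sum_{i\ge 1}1=\infty, \]
where the middle equality holds because $W_{i\wedge\tau}$ equals $W_i$ on $\{\tau>i\}$ and $W_\tau=0$ on $\{\tau\le i\}$, and $\mathbb{E}[W_{i\wedge\tau}]=W_1=1$ by the optional stopping theorem applied to the martingale $W$ at the bounded stopping time $i\wedge\tau$. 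The main obstacle is conceptual rather than computational: one must resist equating the mean of the limit with the limit of the means, and instead exploit the martingale structure of the underlying width process to see that each truncated contribution carries one unit of expected load, so that the integral $L_\tau$ accumulates infinite expectation even though it is finite almost surely.
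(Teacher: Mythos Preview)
Your argument is correct and, for the first two parts (the equality of point masses and the existence of the limit $L_\tau$), matches the paper's proof closely: both hinge on the observation that $W_i\ge 1$ for $i<\tau$, so $L_m\ge m$ whenever $m<\tau$, which forces any occurrence of a small value $l$ to happen on $\{\tau\le m\}$. Your event identity $\{L_{\min(\tau,m)}=l\}=\{L_\tau=l\}$ for $l<m$ packages this a bit more cleanly than the paper's case split on $\{\tau\ge k_v\}$ versus $\{\tau<k_v\}$, but the content is the same.

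The one genuine difference is the infinite-mean step. The paper argues via the sandwich $\tau-1\le L_\tau\le\tau(\tau+1)/2$ (its stated lower bound $\tau\le L_\tau$ is actually off by one, since $L_\tau=L_{\tau-1}\ge\tau-1$ with equality possible when $\tau=2$) and then invokes the classical fact that the first passage time to $0$ of a mean-zero walk has infinite expectation. You instead compute $\mathbb{E}[L_\tau]$ directly by optional stopping: $\mathbb{E}[W_i\mathbf{1}_{\{\tau>i\}}]=\mathbb{E}[W_{i\wedge\tau}]=1$ for every $i$, and Tonelli (legitimate since $W_i\ge 1$ on $\{\tau>i\}$) sums these to $\infty$. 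Your route is more self-contained—it does not appeal to an external result about hitting times and sidesteps the off-by-one in the paper's bound—while the paper's route makes the relation to the hitting time $\tau$ more explicit and simultaneously yields the upper bound $L_\tau\le\tau(\tau+1)/2$, which you do not need but which clarifies the support of the limiting law.
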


\begin{proof}
On the event $\tau \geq k_v$, i.e. the load originated from the top,

\[ I_v(n) = L_{\min(\tau,n,k_v)} = L_{k_v} \geq k_v > l \]
and

\[ I_w(n) = L_{\min(\tau,n,k_w)} \geq L_{\min(\tau,n,k_v)} > l \]
Hence, we need only consider $\tau < k_v$.

\[ {\mathbb P}(I_v(n) = l) = {\mathbb P}(I_v(n) = l, \tau < k_v) = {\mathbb P}(L_{\min(\tau,n)} = l, \tau < k_v) \]

\begin{equation}
\label{loadeq1}
= {\mathbb P}(L_{\min(\tau,n,k_w)} = l, \tau < k_v) = {\mathbb P}(I_w(n) = l)
\end{equation}

The random variable $L_{k_v}(n)$ is constant for $n \geq k_v$, so
\[ {\mathbb P}(I_v(k_v)=l) = {\mathbb P}(I_v(k_w) = l) = {\mathbb P}(I_w(k_w)=l) \]
where in the last equality we use (\ref{loadeq1}).  Consequently, for any fixed $l$, the limit

\[ \lim_{k_v \to \infty} {\mathbb P}(I_v(k_v) = l) \]
exists.  By the definition (\ref{equivdefeq}), a random variable with this limiting distribution is

\[ L_{\infty} := \lim_{k \to \infty} L_{\min(\tau,k,k)} = L_{\tau} \]
Since 

\[ \tau \leq L_{\tau} \leq \frac{\tau(\tau+1)}{2} \] 
the third statement of the theorem will follow if we show that $\tau$ is a.s. finite and has infinite mean.  But since the increments $\{Y_i\}$ of the random walk $\{W_i\}$ have mean zero, the walk is recurrent.  In addition, it is a standard result that the entrance time of the set $\{0\}$ has infinite mean.  This completes the proof.
\end{proof}

\subsection{Dynamics}
Now we investigate some aspects of the effect of $\eta$ on the stability of configurations over time.  As noted, the dynamics creates an interpolation between the discrete web and the discrete dynamical web.  The evolution of the system mirrors some aspects of rill erosion, one being that nodes through which a large amount of water passes at time $n_0$ have a non-trivial probability to channel a large amount of water at any time $n_1 > n_0$.  The degree to which this is true depends on the parameter $\eta$, as we will see.  

\subsubsection{Load Correlation}
We start our analysis by inspecting simulation results.  For any two positive integers $M,N$, let $V_{M,N}$ be an enumeration of the $MN$ nodes in the box $[0,M-1] \times [-N,-1]$ and define the load correlation coefficient at time $n$ (for $n \geq N$) by 

\[ K_{M,N}(n) = \frac{\sum_{v \in V_{M,N}} I'_v(N)I'_v(n)}{\sqrt{(\sum_{v \in V_{M,N}} I'_v(N)^2)(\sum_{v \in V_{M,N}} I'_v(n)^2)}} \]
where $I'_v(n) = I_v(n) - {\mathbb E}(I_v(n))$.  This quantity is only defined for $n \geq N$ because two load vectors for a box of depth $N$ are in some sense incomparable if they are taken at times $n_0, n_1$ with $n_0 < N \leq n_1$.  For example, a node at depth $n$ only has a maximum possible load of $\frac{n(n+1)}{2}$ at time $n<N$, whereas its maximum possible load is $\frac{N(N+1)}{2}$ for $n \geq N$.

In Figure \ref{loadcorfig} we have graphed simulation results for a network 49 nodes deep and 49 nodes wide.  The x-axis represents values of the parameter $\eta$, as it varies from 0 to 5.  The y-axis represents values of a time averaged correlation coefficient, namely the quantity

\[ \frac{1}{N'-N} \sum_{n = N}^{N'} K_{M,N}(n) \]
for $M = N = 49$ and $N' = 200$.  Furthermore, we averaged this value over 6 independent trials.  This quantity is meant to approximate values of $K_{M,N}(n)$ for $M,N,n$ large.  The time averaging seemed necessary because of fluctuations, most likely due to finite size conditions, in the quantity $K_{M,N}(n)$.

\begin{figure*}
\begin{center}
\scalebox{0.5}[.4]{\includegraphics*[viewport = 0in 2.7in 8.75in 8.3in]{./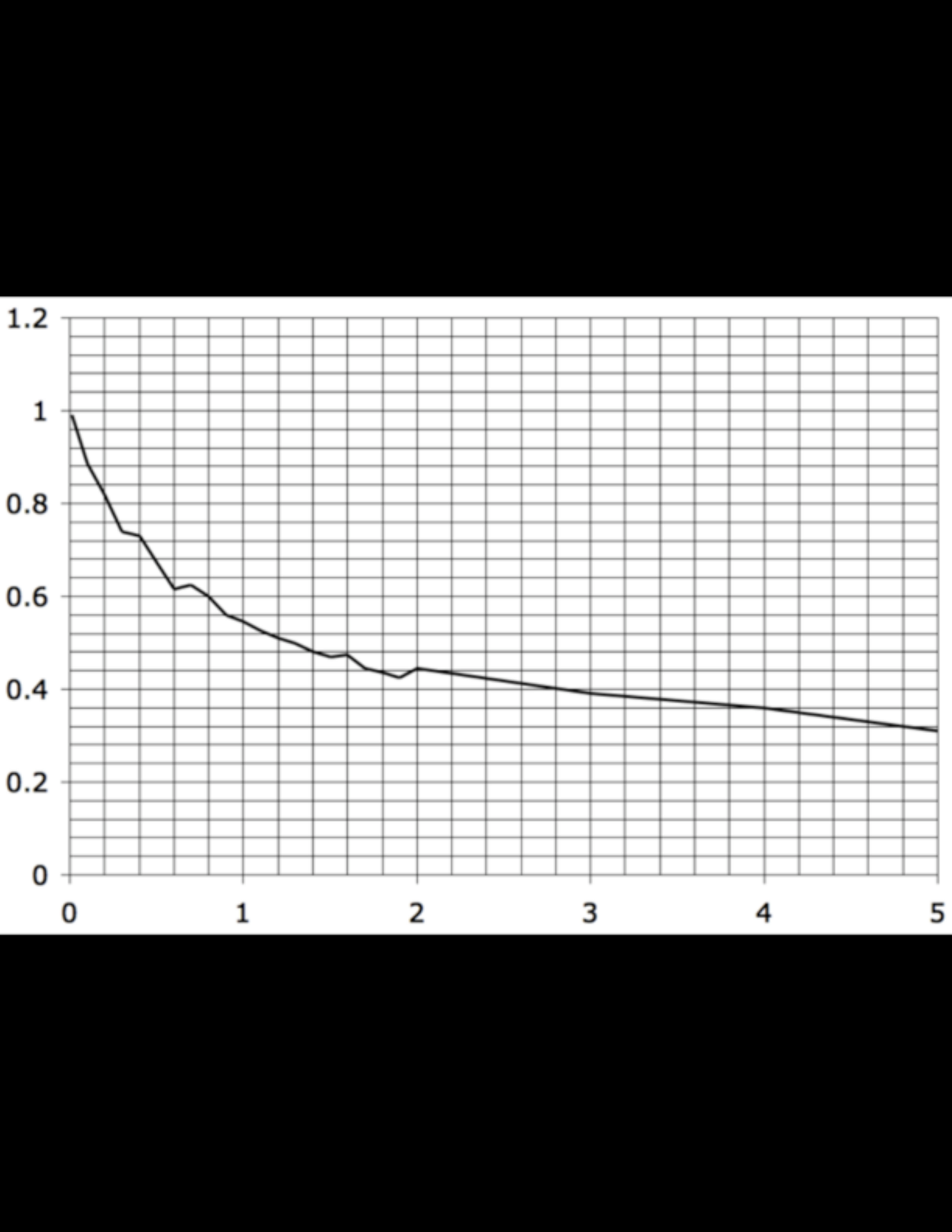}}
\end{center}
\caption{Averaged load correlation (vertical axis) versus $\eta$ (horizontal axis).  Data points are taken at $\eta$-intervals of 0.1 until $\eta=2$, and then at intervals of 1.}
\label{loadcorfig}
\end{figure*}

We can see in Figure \ref{loadcorfig} that the coefficient approaches 1 as $\eta \to 0$.  This makes sense because, as remarked in section 1.3, the $\eta \to 0$ limit of the dynamics (in any fixed box) is the same as the dynamics (or rather non-dynamics) of the discrete web.  Therefore the load vector for this box should be similar (if not the same) at any two times.  As $\eta$ increases, the correlation coefficient decreases and appears to approach 0.  Indeed, additional simulations give the following data: for $\eta = 10, 100, 1000, 10000$, the coefficients were $.2391, .0896, .0189,$ and $.0101,$.  From the discussion of the $\eta \to \infty$ limit given in section 1.3, the correlation coefficient should approach that computed from two load vectors from independent realizations of the discrete web.

\subsubsection{de Finetti Measures}

Whereas we can compare nodes at the top level to standard P\'{o}lya urns, we can compare lower level nodes to \textit{time-dependent input} \cite{pemantle} or \textit{random input} \cite{CAP} P\'{o}lya urns.  We start with an urn with $R_0$ red balls and $B_0$ black balls, as before, but we also have a time-dependent (or random) input sequence $I=(I_0, I_1, ...)$.  At time $t=n$ we draw a ball from the urn and we return it to the urn along with $I_n$ balls of the same color.  Notice that this process with $I=(1,1,...)$ is just the standard P\'{o}lya urn.  

To analyze these lower level nodes, we will also make use of a fundamental result in the theory of exchangeable variables.  

\begin{definition}
$\{0,1\}$-valued variables $X_1, X_2, ...$ are \textbf{exchangeable} if for any $x_1, ..., x_m \in \{0,1\}$ and for any permutation $\sigma$ of $m$ elements we have 

\[ {\mathbb P} (X_1 = x_1, ..., X_m = x_m) = {\mathbb P} (X_{\sigma(1)} = x_1, ..., X_{\sigma(m)} = x_m) \]
\end{definition}

\begin{theorem}[de Finetti]
\label{definettithm}
Let $(\Omega, {\cal F}, {\mathbb P})$ be a probability space and suppose that $\{X_n\}_{n \geq 0}$ are exchangeable $\{0,1\}$-valued random variables defined on $\Omega$.  Then there exists a random variable $F$ on $\Omega$ so that conditioned on $F$, the random variables $X_n$ are independent Bernoulli with parameter $F$.
\end{theorem}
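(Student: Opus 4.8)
The plan is to identify the claimed $F$ with the almost-sure limit of the empirical frequencies $S_n/n$, where $S_n = X_0 + X_1 + \cdots + X_{n-1}$, and then to verify that conditioning on this limit reproduces the product Bernoulli law via an explicit urn-type computation. The one structural fact I would use repeatedly is the elementary consequence of exchangeability that, conditioned on a partial sum $S_n = s$ (for $n$ at least the length of the pattern of interest), the vector $(X_0, \ldots, X_{n-1})$ is distributed uniformly over the $\binom{n}{s}$ binary strings having exactly $s$ ones; in particular the joint law of $(X_0, \ldots, X_{m-1})$ depends only on the number of ones $k = x_0 + \cdots + x_{m-1}$.

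First I would construct $F$. Let $\mathcal{E}_n = \sigma(S_n, X_n, X_{n+1}, \ldots)$ be the decreasing family of $\sigma$-fields obtained by forgetting the order of the first $n$ variables. Because swapping two indices below $n$ preserves both the law of the sequence and the $\sigma$-field $\mathcal{E}_n$, all the conditional expectations ${\mathbb E}(X_i \mid \mathcal{E}_n)$ with $i < n$ are equal, and since they sum to ${\mathbb E}(S_n \mid \mathcal{E}_n) = S_n$ each equals $S_n/n$. Thus $M_n := S_n/n = {\mathbb E}(X_0 \mid \mathcal{E}_n)$ is a bounded reverse martingale for $\{\mathcal{E}_n\}$, and the reverse martingale convergence theorem gives that $M_n \to F := {\mathbb E}(X_0 \mid \mathcal{E}_\infty)$ almost surely and in $L^1$, where $\mathcal{E}_\infty = \bigcap_n \mathcal{E}_n$; the limit $F$ is $[0,1]$-valued and measurable with respect to the exchangeable $\sigma$-field. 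I expect this step to be the main obstacle, since it is the only point at which a genuine convergence theorem is invoked; the rest is algebra.

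Next I would compute the conditional law of a finite pattern. Fix $x_0, \ldots, x_{m-1}$ with $k$ ones. For $n \ge m$ the uniformity statement above yields the hypergeometric identity
\[
{\mathbb P}(X_0 = x_0, \ldots, X_{m-1} = x_{m-1} \mid S_n = s) = \frac{\binom{n-m}{s-k}}{\binom{n}{s}} .
\]
Expanding the factorials, one checks that whenever $s/n \to f$ this ratio tends to $f^{k}(1-f)^{m-k}$. Since $S_n/n \to F$ almost surely and the ratio is bounded by $1$, the conditional probability evaluated at $S_n$ converges almost surely to $F^{k}(1-F)^{m-k}$.

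Finally I would average over $S_n$ and pass to the limit. Taking expectations in the display gives
\[
{\mathbb P}(X_0 = x_0, \ldots, X_{m-1} = x_{m-1}) = {\mathbb E}\!\left( \frac{\binom{n-m}{S_n-k}}{\binom{n}{S_n}} \right),
\]
whose left-hand side is independent of $n$. Letting $n \to \infty$ and applying bounded convergence with the almost-sure limit from the previous step yields
\[
{\mathbb P}(X_0 = x_0, \ldots, X_{m-1} = x_{m-1}) = {\mathbb E}\!\left( F^{k}(1-F)^{m-k} \right).
\]
Since this holds for every finite pattern, it is precisely the statement that, conditioned on $F$, the variables $X_n$ are i.i.d.\ Bernoulli($F$), which completes the proof.
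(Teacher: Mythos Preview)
Your argument is a correct and standard proof of de Finetti's theorem for $\{0,1\}$-valued sequences: the reverse-martingale construction of $F=\lim S_n/n$ is valid, the hypergeometric identity for ${\mathbb P}(X_0=x_0,\ldots,X_{m-1}=x_{m-1}\mid S_n=s)$ follows from the uniformity of exchangeable sequences on level sets of $S_n$, and the passage to the limit by bounded convergence is legitimate.

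There is, however, nothing to compare against: the paper does not prove this theorem. It is quoted as a classical tool (``a fundamental result in the theory of exchangeable variables'') and used without argument; the authors immediately apply it to the direction variables $\{D_v^L(n)\}$. So your proposal supplies a proof where the paper deliberately omits one. If anything, your approach dovetails nicely with the paper's style, since the P\'{o}lya-urn computations in Section~2 already exploit the same hypergeometric structure you use in your second step.

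One minor remark: in the last display you should perhaps note that the conditional probability ${\mathbb P}(X_0=x_0,\ldots\mid S_n)$ is $\mathcal{E}_n$-measurable and hence, by the reverse-martingale structure, equals ${\mathbb E}\bigl(\mathbf{1}_{\{X_0=x_0,\ldots\}}\mid \mathcal{E}_n\bigr)$, so that its almost-sure limit is automatically ${\mathbb P}(X_0=x_0,\ldots\mid \mathcal{E}_\infty)$; this gives the conditional-independence statement directly rather than only the mixture formula. But the mixture formula already suffices for the theorem as stated.
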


It is easy to verify that if depth$(v) = 1$, then the $\{0,1\}$-valued variables $\{D_v^L(n)\}_{n \geq 0}$ are exchangeable.  In our case, the variable $F$ from Theorem \ref{definettithm} is actually 

\begin{equation}
\label{pvdefeq}
p_v := \lim_{n \to \infty} P_v^L(n).
\end{equation}
Thus if we know the asymptotic fraction of left choices for a node, then our node is just flipping independent coins each second with the same bias.

At lower levels, the variables $\{D_v^L(n)\}$ are not exchangeable.  However, they are asymptotically exchangeable.  We use the definition of Kingman \cite{kingman}.

\begin{definition}
$\{0,1\}$-valued random variables $X_1,X_2,...$ are called \textbf{asymptotically exchangeable} if there exists a sequence $Y_1,Y_2,...$ of exchangeable random variables so that for each $x_1,...,x_m \in \{0,1\}$,

\[ \lim_{N \to \infty} {\mathbb P}(X_{1+N} = x_1, ..., X_{m+N} = x_m) \]

\[ = {\mathbb P}(Y_1 = x_1, ..., Y_m = x_m) \]
In the language of Theorem \ref{definettithm}, let $F$ be the random variable associated with the exchangeable variables $\{X_n\}$.  We call $F$ the \textbf{de Finetti measure} for the sequence $\{Y_n\}$.
\end{definition}

Let $v$ be a node with depth $k \geq 1$.

\begin{theorem}
\label{dynamicsthm1}
The variables $\{P_v^L(n)\}_{n \geq 0}$ form a bounded martingale sequence w.r.t. ${\cal F}_n$.  Therefore they have an almost sure limit $p_v$.
\end{theorem}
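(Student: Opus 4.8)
The plan is to verify directly the three ingredients of a bounded martingale — adaptedness, the one-step martingale identity, and uniform boundedness — and then invoke the martingale convergence theorem, exactly as in the depth-$1$ computation behind Theorem \ref{rowonethm}. The one new feature relative to that case is that the input load $I_v(n)$ is no longer the constant $r$ but a genuinely random, time-dependent quantity; the whole point will be that, conditioned on ${\cal F}_n$, this input is already determined, so over a single step the node still behaves like a time-dependent input P\'olya urn and the telescoping that produced the martingale at the top level survives verbatim.

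First I would establish that the process is adapted, i.e. that $I_v(n)$, and hence $T_v^L(n)$, $T_v(n)$ and $P_v^L(n)$, are ${\cal F}_n$-measurable. I would argue by induction on the depth $k$. For $k=1$ the input is the deterministic constant $r$, so there is nothing to check. For $k>1$ the recursion
\[ I_v(n) = I_{w_1}(n-1)(1-D_{w_1}^L(n)) + I_{w_2}(n-1)D_{w_2}^L(n) + r \]
expresses $I_v(n)$ through the parents' inputs at time $n-1$ — which are ${\cal F}_{n-1}$-measurable by the inductive hypothesis — and the coins $D_{w_1}^L(n)$, $D_{w_2}^L(n)$, which are ${\cal F}_n$-measurable by definition of the filtration. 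Thus $I_v(n)$ is ${\cal F}_n$-measurable, and since $T_v^L(n)=I_v(n-1)D_v^L(n)+T_v^L(n-1)$ and $T_v(n)=I_v(n-1)+T_v(n-1)$, formula (\ref{leftprob}) shows $P_v^L(n)$ is ${\cal F}_n$-measurable as well. Boundedness is then immediate from (\ref{leftprob}): since $0\le T_v^L(n)\le T_v(n)$, we have $0 \le \eta/(T_v(n)+2\eta) \le P_v^L(n) \le (T_v(n)+\eta)/(T_v(n)+2\eta) \le 1$.

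Next I would compute the conditional expectation. Feeding the updates $T_v^L(n+1)=I_v(n)D_v^L(n+1)+T_v^L(n)$ and $T_v(n+1)=I_v(n)+T_v(n)$ into (\ref{leftprob}), everything except $D_v^L(n+1)$ is ${\cal F}_n$-measurable, and by construction ${\mathbb E}(D_v^L(n+1)\mid {\cal F}_n)=P_v^L(n)$. Hence
\[ {\mathbb E}(P_v^L(n+1)\mid {\cal F}_n) = \frac{I_v(n)P_v^L(n)+T_v^L(n)+\eta}{I_v(n)+T_v(n)+2\eta}. \]
Substituting $P_v^L(n)=(T_v^L(n)+\eta)/(T_v(n)+2\eta)$ and factoring $T_v^L(n)+\eta$ out of the numerator cancels the factor $I_v(n)+T_v(n)+2\eta$, leaving exactly $P_v^L(n)$. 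This is the routine algebra: the same cancellation as for the classical P\'olya urn, with $I_v(n)$ playing the role of the (here random) number of balls added. Having a bounded martingale, the martingale convergence theorem yields an almost sure limit, which I would name $p_v$ as in (\ref{pvdefeq}).

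The only genuinely non-obvious point — and the step I would flag as the main obstacle — is the adaptedness of the random input. The subtlety is that $I_v(n)$ depends on the entire history of directional choices of the ancestors of $v$, with the propagation lag noted at the start of this section, so one must check that no ``future'' coin enters its definition; the induction above is precisely what guarantees this. Once adaptedness is in hand, the random input causes no further trouble, because conditioning on ${\cal F}_n$ freezes $I_v(n)$ and the single-step computation becomes formally identical to the constant-input case underlying Theorem \ref{rowonethm}.
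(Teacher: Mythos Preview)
Your proof is correct and is precisely the direct verification that the paper defers to by citing \cite[Theorem 2.1]{CAP}: you carry out the adaptedness check, the one-step conditional expectation computation, and the boundedness bound that together constitute the standard martingale argument for a P\'olya urn with random (${\cal F}_n$-measurable) input. The paper's proof is nothing more than a pointer to that same computation, so there is no substantive difference in approach.
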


\begin{proof}
Similar to the proof of \cite[Theorem 2.1]{CAP}
\end{proof}

\begin{remark}
Using the same equations which produce (\ref{pequalsl}), the limit in Theorem \ref{dynamicsthm1} is the same as the limit of the variables $\{LF_v^L(n)\}_{n \geq 0}$.  
\end{remark}

For any number $0 \leq p \leq 1$, define the measure ${\mathbb Q}_p$ on the set $\{0, 1\}$ by

\[ {\mathbb Q}_p(\{0\}) = 1-p \textrm{ , } {\mathbb Q}_p(\{1\}) = p \]
Let $\{v_1, ..., v_r\}$ be a finite set of vertices.  For any vector of real numbers $(p_1, ..., p_r)$, each between 0 and 1, define the product measure ${\mathbb Q}_{\vec{p}}$ on vectors in $\{0,1\}^r$ to be the product measure $\prod_{i =1}^r {\mathbb Q}_{p_i}$.

\begin{theorem}
\label{dynamicsthm2}
For fixed $v$, the variables $\{D_v^L(n) : n \geq 1\}$ are asymptotically exchangeable with de Finetti measure equal to the distribution of $p_v$.  Furthermore, let $\vec{v} = (v_1, ..., v_r)$ be a vector of vertices and for each $n$, let $D_{\vec{v}}^L(n) = (D_{v_1}^L(n), ..., D_{v_r}(n))$.  If $\vec{d}_1, ..., \vec{d}_s$ are vectors in $\{0,1\}^r$, with probability one,

\[ \lim_{T \to \infty} {\mathbb P}(D_{\vec{v}}^L(1 + T) = \vec{d}_1, ..., D_{\vec{v}}^L(s + T) = \vec{d}_s) = {\mathbb E}(\prod_{i=1}^s {\mathbb Q}_{\vec{p}} (\vec{d}_i)) \]
where $\vec{p} = (p_{v_1}, ..., p_{v_r})$.
\end{theorem}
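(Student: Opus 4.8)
The plan is to exploit two structural facts. First, the model's definition gives that conditionally on $\mathcal{F}_{n-1}$ the coins $\{D_v^L(n)\}_v$ are independent with $D_v^L(n)\sim\mathrm{Bernoulli}(P_v^L(n-1))$. Second, for a node at fixed depth the bias $P_v^L$ barely moves over any bounded window once $n$ is large, so the drifting biases over a block of times can be replaced, at small cost, by their frozen values. The statement will then follow from a total-variation coupling together with the a.s. convergence $P_{v_j}^L(T)\to p_{v_j}$ from Theorem \ref{dynamicsthm1}. The key deterministic estimate comes from (\ref{leftprob}): a one-step update changes the bias by at most $|P_v^L(n+1)-P_v^L(n)|\le I_v(n)/(T_v(n)+2\eta)$, since adding $I_v(n)$ to numerator and denominator (or to the denominator alone) moves the ratio by no more than that. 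For a node of fixed depth $k$, Theorem \ref{loadthm1}(c) bounds $I_v(n)\le k(k+1)/2=:C_k$, while $T_v(n)=\sum_{j<n}I_v(j)\ge n$. Hence over a window of length $m$ starting at $N$ one gets $\sup_{0\le j\le m-1}|P_v^L(N+j)-P_v^L(N)|\le mC_k/N\to 0$ \emph{deterministically}.

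Next I would set up the coupling for a single vertex $v$. Conditioned on $\mathcal{F}_N$, the real sequence $(D_v^L(N+1),\dots,D_v^L(N+m))$ is conditionally Bernoulli with parameters $P_v^L(N+j-1)$, whereas the idealized sequence is i.i.d.\ $\mathrm{Bernoulli}(P_v^L(N))$. Chaining maximal couplings step by step, the two disagree with conditional probability at most $\sum_{j=1}^m|P_v^L(N+j-1)-P_v^L(N)|\le m^2C_k/N$. Thus for any pattern $(x_1,\dots,x_m)$ with $N_L$ left-choices and $N_R$ right-choices, $\big|\mathbb{P}(D_v^L(N+1)=x_1,\dots,D_v^L(N+m)=x_m)-\mathbb{E}[(P_v^L(N))^{N_L}(1-P_v^L(N))^{N_R}]\big|\le m^2C_k/N$. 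Since $P_v^L(N)\to p_v$ a.s.\ and is bounded, bounded convergence yields the limit $\mathbb{E}[p_v^{N_L}(1-p_v)^{N_R}]$, which is exactly the de Finetti representation of an exchangeable sequence whose mixing measure is the law of $p_v$; this proves the first assertion.

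For the joint statement over $\vec v=(v_1,\dots,v_r)$ and $s$ time slots I would run the identical argument conditioning on $\mathcal{F}_T$, now using conditional independence across vertices as well as the per-vertex drift bound. The comparison law is the product over all $rs$ coordinates in which vertex $v_j$ uses the frozen bias $P_{v_j}^L(T)$ in every slot; the coupling error is at most $\sum_{j=1}^r\sum_{i=1}^s|P_{v_j}^L(T+i-1)-P_{v_j}^L(T)|\le rs\max_j(s\,C_{k_{v_j}}/T)\to 0$. The frozen conditional probability is precisely $\prod_{i=1}^s\mathbb{Q}_{\vec P^L(T)}(\vec d_i)$ with $\vec P^L(T)=(P_{v_1}^L(T),\dots,P_{v_r}^L(T))$, and since $\vec P^L(T)\to\vec p$ a.s.\ (Theorem \ref{dynamicsthm1}), bounded convergence gives $\mathbb{E}\big[\prod_{i=1}^s\mathbb{Q}_{\vec p}(\vec d_i)\big]$, as claimed.

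The main obstacle is exactly the non-exchangeability produced by the bias drifting within the window: unlike a genuine P\'olya urn, the parameter $P_v^L$ depends on the choices made inside the observation block, so the joint law does not factor. The crux is therefore the deterministic $O(1/N)$ increment bound together with the total-variation estimate that converts it into control of the joint distribution; everything else is bounded convergence driven by Theorem \ref{dynamicsthm1}. The one point to verify with care is the uniform boundedness of the inputs across the finitely many depths $k_{v_1},\dots,k_{v_r}$ (again via Theorem \ref{loadthm1}(c)), which is what makes the per-vertex drift bounds vanish simultaneously.
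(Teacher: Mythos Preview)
Your argument is correct. The paper itself does not give a detailed proof of this theorem; it simply writes ``Similar to the proof of \cite[Theorem 2.2]{CAP}'' and defers to the random-input P\'olya urn literature. So there is no in-paper proof to compare against line by line.

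That said, your route differs in spirit from what the cited reference presumably does. The CAP results (and the paper's own proof of the switching-rate theorem, which is explicitly modeled on \cite[Theorem 2.3]{CAP}) proceed via martingale arguments that work for general random-input urns without any boundedness assumption on the inputs. You instead exploit a feature specific to the present model: by Theorem \ref{loadthm1}(c), a node at fixed depth $k$ has inputs deterministically bounded by $C_k=k(k+1)/2$, and since $T_v(n)\ge n$ you get the one-step drift bound $|P_v^L(n+1)-P_v^L(n)|\le C_k/(n+2\eta)$. This turns the problem into a straightforward total-variation coupling over a finite window, followed by bounded convergence using Theorem \ref{dynamicsthm1}. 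The upshot is a shorter and more elementary proof, at the price of being tied to the bounded-input setting (which is exactly the setting of the paper, so nothing is lost here). Your handling of the multivariate case is also fine: conditional independence of the coins across vertices given $\mathcal{F}_{n-1}$, together with the same deterministic per-vertex drift bound applied at each of the finitely many depths $k_{v_1},\dots,k_{v_r}$, makes the joint coupling error vanish just as in the single-vertex case.
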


\begin{proof}
Similar to the proof of \cite[Theorem 2.2]{CAP}.  
\end{proof}

Because of lateral translation invariance, the de Finetti measure for $v$ depends only on the depth $k$.  In light of this, we define

\begin{equation}
\label{definettidef}
\theta_k = \textrm{ de Finetti measure for row } k
\end{equation}
With this framework we will be able to study the switching rate of each node $v$ once we have the following lemma.

\begin{lemma}
\label{freqlemma}
For any node $v$, almost surely,

\begin{equation}
\lim_{n \to \infty} \frac{1}{n} \sum_{i=1}^n D_v^L(i) = p_v
\end{equation}
\end{lemma}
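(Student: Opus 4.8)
The plan is to decompose the empirical average into a martingale part and a predictable part, show the martingale part vanishes under Cesàro scaling, and identify the limit of the predictable part using Theorem \ref{dynamicsthm1}.

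First I would record the key identity coming directly from the definition of the model. Since $D_v^L(i)$ is, conditionally on ${\cal F}_{i-1}$, a Bernoulli variable with parameter $P_v^L(i-1)$, and since $P_v^L(i-1)$ is ${\cal F}_{i-1}$-measurable, we have $\mathbb{E}[D_v^L(i) \mid {\cal F}_{i-1}] = P_v^L(i-1)$. I would then define the increments $\xi_i = D_v^L(i) - P_v^L(i-1)$, so that $\{\xi_i\}$ is a martingale difference sequence with respect to $\{{\cal F}_i\}$ and the partial sums $M_n = \sum_{i=1}^n \xi_i$ form a martingale whose increments satisfy $|\xi_i| \leq 1$.

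Next I would show $M_n / n \to 0$ almost surely. Because $|\xi_i| \leq 1$, the Azuma--Hoeffding inequality gives $\mathbb{P}(|M_n| \geq \epsilon n) \leq 2 \exp(-\epsilon^2 n / 2)$ for every $\epsilon > 0$. These probabilities are summable in $n$, so Borel--Cantelli shows that for each fixed $\epsilon$ we have $|M_n|/n < \epsilon$ for all large $n$, almost surely; letting $\epsilon$ run through a sequence tending to $0$ yields $M_n/n \to 0$ a.s. (Alternatively one could invoke the $L^2$ martingale strong law, since the conditional variances are bounded and $\sum_n n^{-2} < \infty$.) For the predictable part, Theorem \ref{dynamicsthm1} says $P_v^L(m) \to p_v$ almost surely, so its Cesàro averages converge to the same limit, and adding the two pieces gives
\[ \frac{1}{n}\sum_{i=1}^n D_v^L(i) = \frac{1}{n} M_n + \frac{1}{n}\sum_{i=1}^n P_v^L(i-1) \longrightarrow 0 + p_v = p_v \]
almost surely, which is exactly the assertion.

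The conditional-expectation identity and the Cesàro limit are routine; the only step demanding genuine care is the martingale strong law $M_n/n \to 0$. The boundedness of the increments makes this standard, but it is the heart of the argument, since it is precisely where the almost-sure martingale convergence of $P_v^L(n)$ is upgraded into an almost-sure statement about the realized frequencies $D_v^L(i)$ rather than a mere convergence in mean.
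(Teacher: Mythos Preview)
Your argument is correct and is essentially the approach one expects the paper to be invoking: the paper simply cites \cite[Theorem 2.3]{CAP} without details, and the proof of the very next theorem in the paper carries out exactly the decomposition you describe (martingale difference plus predictable part, then $M_n/n \to 0$). The only cosmetic difference is that the paper's in-house arguments favor the Kronecker-lemma route (summing $(M_{i+1}-M_i)/i$ and showing the resulting martingale is $L^2$-bounded) rather than Azuma--Hoeffding plus Borel--Cantelli; you already note this alternative, and since $|\xi_i|\le 1$ both work equally well here.
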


\begin{proof}
Similar to the proof of \cite[Theorem 2.3]{CAP}.
\end{proof}

We now define the switching function $s_v$ for $n \geq 2$ by $s_v(n) = D_v^L(n)(1-D_v^L(n-1)) + D_v^L(n-1)(1-D_v^L(n))$.  Define the switching rate $S_v(n)$ to be the time average of $s_v$, that is

\[ S_v(n) = \frac{1}{n-1} \sum_{i=2}^n s_v(i) \]

\begin{theorem}
The $n \to \infty$ limit of $S_v(n)$ exists a.s. 

\begin{equation}
\lim_{n \to \infty} S_v(n) = 2p_v(1-p_v)
\end{equation}
\end{theorem}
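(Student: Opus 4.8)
The plan is to expand the switching indicator algebraically and reduce the claim to three Ces\`aro averages, two of which are handled immediately by Lemma \ref{freqlemma} and the third of which carries all the difficulty. First I would note that since $D_v^L(i), D_v^L(i-1) \in \{0,1\}$,
\[ s_v(i) = D_v^L(i) + D_v^L(i-1) - 2 D_v^L(i)D_v^L(i-1), \]
so that
\[ S_v(n) = \frac{1}{n-1}\sum_{i=2}^n D_v^L(i) + \frac{1}{n-1}\sum_{i=2}^n D_v^L(i-1) - \frac{2}{n-1}\sum_{i=2}^n D_v^L(i)D_v^L(i-1). \]
By Lemma \ref{freqlemma}, the first sum and (after reindexing $j = i-1$) the second sum each converge almost surely to $p_v$; dropping or shifting a single term and replacing the normalization $n$ by $n-1$ does not affect the Ces\`aro limit. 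Thus the theorem reduces to proving that the cross term converges almost surely to $p_v^2$, after which the three limits combine to give $p_v + p_v - 2p_v^2 = 2p_v(1-p_v)$.

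The main obstacle is precisely this cross term, because consecutive variables $D_v^L(i)$ and $D_v^L(i-1)$ are correlated and only asymptotically exchangeable, so no direct law of large numbers applies. To handle it I would pass to a martingale. Since $D_v^L(i)$ is conditionally Bernoulli with parameter $P_v^L(i-1)$ given $\mathcal{F}_{i-1}$, while $D_v^L(i-1)$ is $\mathcal{F}_{i-1}$-measurable, we have
\[ \mathbb{E}\!\left[ D_v^L(i)D_v^L(i-1) \mid \mathcal{F}_{i-1} \right] = D_v^L(i-1)\, P_v^L(i-1). \]
Hence
\[ M_n := \sum_{i=2}^n \left( D_v^L(i)D_v^L(i-1) - D_v^L(i-1)\, P_v^L(i-1) \right) \]
is an $\{\mathcal{F}_n\}$-martingale whose increments are bounded by $1$ in absolute value. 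Because $\sum_n \mathbb{E}[(M_n - M_{n-1})^2]/n^2 < \infty$, the strong law for martingales with square-summably-normalized increments gives $M_n/(n-1) \to 0$ almost surely. It therefore suffices to show
\[ \frac{1}{n-1}\sum_{i=2}^n D_v^L(i-1)\, P_v^L(i-1) \longrightarrow p_v^2 \quad \text{a.s.} \]

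For this final step I would combine $P_v^L(i-1) \to p_v$ almost surely (Theorem \ref{dynamicsthm1}) with a Toeplitz/Ces\`aro argument. Writing
\[ D_v^L(i-1)\, P_v^L(i-1) = p_v\, D_v^L(i-1) + D_v^L(i-1)\left( P_v^L(i-1) - p_v \right), \]
the first piece averages to $p_v \cdot p_v = p_v^2$ by Lemma \ref{freqlemma}, while the second is dominated in absolute value by $|P_v^L(i-1) - p_v|$, a sequence tending to $0$, so its Ces\`aro average tends to $0$ as well. Assembling the martingale convergence with this limit yields convergence of the cross term to $p_v^2$, and combining the three averages gives $\lim_{n\to\infty} S_v(n) = 2p_v(1-p_v)$ almost surely. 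I expect the only delicate point to be the martingale strong law applied to $M_n$; once that is secured, everything else is a routine Ces\`aro estimate, entirely parallel in spirit to the convergence arguments used for Lemma \ref{freqlemma} and \cite{CAP}.
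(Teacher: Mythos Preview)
Your proof is correct and follows essentially the same route as the paper's: expand $s_v$ algebraically, reduce to the cross term $\frac{1}{n}\sum d_i d_{i-1}$, center it via a martingale with bounded increments, and conclude with Lemma~\ref{freqlemma} and a Ces\`aro argument. The only minor variation is that you center each summand by $P_v^L(i-1)$ whereas the paper centers the whole partial sum by the current $P_v^L(n)$ (using that $P_v^L$ is itself a martingale); your choice makes the martingale property immediate but costs an extra Toeplitz step at the end, while the paper's choice reverses this trade-off and lets the final average factor as $p_n\cdot\frac{1}{n}\sum d_{i-1}$ directly.
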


\begin{proof}
The proof is similar to the proof of \cite[Theorem 2.3]{CAP}.  Let $d_n = D_v^L(n)$ and $p_n = P_v^L(n)$.  A straightforward calculation gives

\begin{equation}
\label{condexeq}
{\mathbb E}(d_{n+1}|{\cal F}_n) = p_n \textrm{ , } {\mathbb E}(p_{n+1}|{\cal F}_n) = p_n
\end{equation}
Now,

\[ \lim_{n \to \infty} \frac{1}{n} \sum_{i=2}^n s_v(i) = \lim_{n \to \infty} \frac{1}{n} \sum_{i=2}^n (d_i(1-d_{i-1}) + d_{i-1}(1-d_i)) \]

\[ = \lim_{n \to \infty} \frac{1}{n} \sum_{i=2}^n (d_i + d_{i-1}) -2 \lim_{n \to \infty} \frac{1}{n} \sum_{i=2}^n d_i d_{i-1} = 2p_v - 2 \lim_{n \to \infty} \frac{1}{n} \sum_{i=2}^n d_i d_{i-1} \]
by Lemma \ref{freqlemma}.  We must show that the last limit above is a.s. equal to $p_v^2$.

Define $M_n = \sum_{i=2}^n (d_i-p_n)d_{i-1}$.  $M_n$ is a martingale with respect to ${\cal F}_n$: 

\[ {\mathbb E}(M_{n+1}|{\cal F}_n) = {\mathbb E}(\sum_{i=2}^n (d_i d_{i-1}) - \sum_{i=2}^n (p_{n+1} d_{i-1}) + d_{n+1} d_n - p_{n+1} d_n | {\cal F}_n) \]

\[ = \sum_{i=2}^n(d_i d_{i-1}) - \sum_{i=2}^n (p_n d_{i-1}) + p_n d_n - p_n d_n = M_n \]
where we use both equations in (\ref{condexeq}).  Note also

\[ \lim_{n \to \infty} \frac{1}{n} \sum_{i=2}^n p_n d_{i-1} = p_v \lim_{n \to \infty} \frac{1}{n} \sum_{i=2}^n d_{i-1} \]
which is $p_v^2$, by Lemma \ref{freqlemma}.  Therefore it suffices to show that with probability one,

\begin{equation}
\label{avgtozeroeq}
\frac{M_n}{n} \to 0.
\end{equation}
By summing the series

\[ \frac{1}{n}(M_1 + ... + M_n) = \sum_{i=2}^n \frac{M_i}{i} \left( \frac{i}{n} \right) \]
by parts, it can be shown that (\ref{avgtozeroeq}) will follow once we show that 

\[ \sum_{i=2}^{\infty} \frac{M_{i+1}-M_i}{i} \]
converges.  To this end, define $M'_n = \sum_{i=2}^{n-1} \frac{M_{i+1}-M_i}{i}$.  We leave the reader to verify that $M'_n$ is a martingale.  Using $L^2$-orthogonality of martingale differences,

\[ {\mathbb E}(M'_n)^2 = {\mathbb E}(\sum_{i=2}^{n-1} \left( \frac{M_{i+1}-M_i}{i} \right) ^2)  = {\mathbb E}(\sum_{i=2}^{n-1} \left( \frac{(d_i-p_n)(d_{i-1})}{i} \right) ^2) \]

\[ \leq \sum_{i=2}^n \frac{1}{i^2} < \infty \]
Therefore $M_n'$ is an $L^2$ bounded martingale and converges a.s.  This completes the proof.

\end{proof}

If $p_v \in (0,1)$ then neither of the choices L or R are transient for $v$.  This prompts the question of whether or not the de Finetti measures $\theta_k$ have atoms at 0 or 1.  For any fixed $k$, the answer is no.

\begin{theorem}
\label{noatomsthm}
For each $k \geq 1$, the measure $\theta_k$ has no atoms.
\end{theorem}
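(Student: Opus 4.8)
The plan is to fix a node $v$ at depth $k$, recall from Theorem \ref{dynamicsthm1} and the remark following it that $p_v=\lim_n P_v^L(n)=\lim_n LF_v^L(n)$, and to condition on the entire input--load sequence $\mathcal{G}_v=\sigma(I_v(n):n\ge 0)$. Given $\mathcal{G}_v$, the bias $P_v^L(n-1)=\frac{T_v^L(n-1)+\eta}{T_v(n-1)+2\eta}$ is a function of the fixed inputs and of $v$'s own past draws only, while $D_v^L(n)$ is conditionally ${\rm Bernoulli}(P_v^L(n-1))$; since distinct vertices interact only through their loads, conditioning on $\mathcal{G}_v$ turns $\{D_v^L(n)\}_{n\ge 1}$ into a time--dependent P\'olya urn (in the sense of \cite{pemantle}, \cite{CAP}) driven by the prescribed input. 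Because $\mathbb{P}(p_v=a)=\mathbb{E}\big[\mathbb{P}(p_v=a\mid\mathcal{G}_v)\big]$ for every $a$, and a mixture of atomless laws is atomless, it suffices to prove that the conditional law of $p_v$ given $\mathcal{G}_v$ is almost surely atomless.

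Next I would record the two structural inputs to the urn estimate. By Theorem \ref{loadthm1}(c), for $n\ge k$ the additions are bounded, $1\le I_v(n)\le C:=\tfrac{k(k+1)}{2}$, and since every addition is at least $1$ the total $T_v(n)=\sum_{j=0}^{n-1}I_v(j)\to\infty$ grows at least linearly. Writing $p_n=P_v^L(n)$ and $\delta_n=\frac{I_v(n)}{T_v(n)+I_v(n)+2\eta}$, a one--line computation gives $p_{n+1}=p_n+(D_v^L(n+1)-p_n)\,\delta_n$, so from the state $p_n$ the next value lands on one of exactly two points separated by $\delta_n$, chosen with probabilities $p_n$ and $1-p_n$; boundedness forces $\delta_n\asymp 1/n$, hence $\sum_n\delta_n=\infty$ while $\sum_n\delta_n^2<\infty$ (consistent with the a.s.\ convergence from Theorem \ref{dynamicsthm1}).

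The heart of the proof --- and the step I expect to be the main obstacle --- is the purely urn--theoretic claim that a time--dependent P\'olya urn with additions bounded in $[1,C]$ and $\eta>0$ has an atomless limiting fraction. Fix $a$ and set $q_n=\mathbb{P}(p_v=a\mid\mathcal{F}_n)$, a bounded martingale with $q_n\to\mathbf{1}_{\{p_v=a\}}$. The endpoints $a\in\{0,1\}$ I would treat separately: since the conditional probability of a left draw is always at least $\eta/(T_v(n)+2\eta)\gtrsim 1/n$, a comparison with the exactly solvable constant--input case (where the limit is $\beta(\eta,\eta)$, which has no mass at $0$ or $1$; see Theorem \ref{rowonethm} and \cite{freedman}) rules out atoms at the endpoints. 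For interior $a\in(0,1)$, on $\{p_v=a\}$ one has $p_n\to a$, so eventually $\min(p_n,1-p_n)\ge\rho>0$; the two continuations from $p_n$ then carry comparable weight and are separated by $\delta_n$, and because $\sum_n\delta_n=\infty$ this repeated splitting should prevent any single point from retaining positive conditional mass. Making this rigorous requires a quantitative anti--concentration estimate --- comparing the probabilities that the ``up'' and ``down'' sub--urns converge to exactly $a$, and showing $\mathbb{E}[q_n(1-q_n)]\not\to 0$ unless $\mathbb{P}(p_v=a)=0$ --- which in the non--exchangeable bounded--input setting plays the role that the smoothness of the beta density plays at depth $1$. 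This is precisely the type of argument carried out for time--dependent and random--input urns in \cite{pemantle} and \cite{CAP}, and I would follow their method, the only new ingredient being the uniform bound $I_v(n)\le\tfrac{k(k+1)}{2}$ supplied by Theorem \ref{loadthm1}, which is what makes the conditional urn fall into the ``slowly growing'' regime where the limit is atomless.
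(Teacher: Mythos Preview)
Your approach is correct and coincides with the paper's: both reduce the question to the fact that a time-dependent P\'olya urn with uniformly bounded additions has an atomless limiting fraction, the bound here being $I_v(n)\le \tfrac{k(k+1)}{2}$ from Theorem~\ref{loadthm1}. The paper simply cites \cite[Theorem~4]{pemantle} for that urn result, whereas you make the conditioning on $\mathcal{G}_v$ explicit (which is the clean way to pass from random to deterministic inputs) and sketch Pemantle's argument; the only substantive content you would need to supply is exactly the anti-concentration step you flag, which is precisely what \cite{pemantle} proves.
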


\begin{proof}
In \cite[Theorem 4]{pemantle} it is shown that a time-dependent input P\'{o}lya urn's de Finetti measure cannot have atoms if there is a $C$ so that $I_v(n) \leq C$ for all $n$.  For each realization of the dynamics and for each $v$, we have $I_v(n) \leq \frac{k_v(k_v+1)}{2}$ for all $n$.  The result follows.
\end{proof}

\begin{corollary}
\label{transiencecor}
Each node $v$ has a nonzero asymptotic switching rate.  Therefore, for each $v$, the states L and R are recurrent.
\end{corollary}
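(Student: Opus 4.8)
The plan is to read off both assertions from the switching-rate formula established just above together with the absence of boundary atoms; the only fact I really need is that $p_v \in (0,1)$ almost surely. By Theorem~\ref{dynamicsthm2} the distribution of $p_v$ is the de Finetti measure $\theta_{k_v}$, and by Theorem~\ref{noatomsthm} this measure has no atoms. In particular $\mathbb{P}(p_v = 0) = \mathbb{P}(p_v = 1) = 0$, so $p_v$ lies strictly between $0$ and $1$ with probability one. This is the single input that drives the whole argument.

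For the first assertion I would invoke the theorem just established, which gives $\lim_{n\to\infty} S_v(n) = 2p_v(1-p_v)$ almost surely. On the full-measure event $\{p_v \in (0,1)\}$ the quantity $2p_v(1-p_v)$ is strictly positive, so the asymptotic switching rate is nonzero almost surely, which is exactly the first claim.

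For recurrence I would argue that a strictly positive limiting value of $S_v(n) = \frac{1}{n-1}\sum_{i=2}^n s_v(i)$ forces $\sum_{i=2}^n s_v(i)$ to grow linearly, hence to diverge; thus $s_v(i) = 1$ for infinitely many $i$, i.e. node $v$ changes direction infinitely often. Since a switch always toggles the current direction, consecutive switches alternate between $L \to R$ and $R \to L$, so infinitely many switches entail that each of $L$ and $R$ is selected infinitely often, which is the meaning of recurrence of both states. Equivalently, Lemma~\ref{freqlemma} shows that $L$ is chosen with asymptotic frequency $p_v$ and $R$ with asymptotic frequency $1-p_v$, both strictly positive on $\{p_v \in (0,1)\}$, so both directions recur.

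The only substantive point is the passage from a nonzero switching rate to recurrence, and even this is elementary once one observes that switches necessarily alternate direction; everything else is a direct substitution into results already in hand. I therefore expect no genuine obstacle: the corollary is essentially a packaging of the no-atoms theorem with the switching-rate identity.
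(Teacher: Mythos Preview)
Your proof is correct and follows essentially the same route as the paper, which simply states that the corollary is a direct consequence of Lemma~\ref{freqlemma} and Theorem~\ref{noatomsthm}. You are more explicit: you invoke the switching-rate identity $\lim_n S_v(n)=2p_v(1-p_v)$ together with Theorem~\ref{noatomsthm} to get $p_v\in(0,1)$ a.s., and then spell out why a positive limiting switching rate forces infinitely many switches and hence recurrence of both $L$ and $R$; your alternative argument via Lemma~\ref{freqlemma} (positive asymptotic frequency for each direction) is precisely what the paper's citation points to.
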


\begin{proof}
This is a direct consequence of Lemma \ref{freqlemma} and Theorem \ref{noatomsthm}.
\end{proof}

\begin{corollary}
\label{loaddivcor}
With probability one, for each node $v$, the variable $I_v(n)$ takes each value in $[1,\frac{k_v(k_v+1)}2]$ for infinitely many values of $n$. 
\end{corollary}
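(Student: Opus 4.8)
The plan is to fix a node $v$ of depth $k$ and a target value $l \in [1,\frac{k(k+1)}{2}]$, show that almost surely $I_v(n)=l$ for infinitely many $n$, and then take a countable intersection over all $v$ and the finitely many admissible $l$. The key structural observation is implicit in the recursion defining $I_v(n)$: for each fixed $n$, the load $I_v(n)$ is a deterministic function of a \emph{fixed finite} family of coins $\{D_w^L(t_w(n))\}$, where $w$ ranges over the triangular set of ancestors of $v$ at depths $1,\dots,k-1$ and $t_w(n)=n-(k-\mathrm{depth}(w)-1)$ is the delayed time at which $w$'s choice propagates down to $v$. In particular, every coin relevant to $I_v(n)$ is read during the time window $[n-k+2,\,n]$, whose length $k-1$ does not depend on $n$. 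Let $M$ denote the (fixed, depending only on $k$) number of these relevant coins.

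First I would record that every value $l\in[1,\frac{k(k+1)}{2}]$ is genuinely attainable. Viewing $I_v(n)$ for $n\ge k$ through the random-walk description $L_{\min(\tau,n,k)}$ of (\ref{equivdefeq}) --- or simply as the above function of the ancestor coins --- there is at least one configuration $\pi_l$ of the relevant coins that yields $I_v(n)=l$. Consequently the event $\{I_v(n)=l\}$ contains the event that the relevant ancestor coins realize the pattern $\pi_l$.

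The probabilistic heart is a lower bound on conditional probabilities. Since the ancestor set is finite and, by Theorem \ref{noatomsthm}, each de Finetti measure $\theta_{\mathrm{depth}(w)}$ has no atoms, we have $p_w\in(0,1)$ for every relevant $w$ almost surely, so $\delta:=\min_w \min(p_w,\,1-p_w)>0$ a.s. By Theorem \ref{dynamicsthm1} and the following remark, $P_w^L(t)\to p_w$, so there is a random time $T_0$ after which $P_w^L(t)\in(\delta/2,\,1-\delta/2)$ for all relevant $w$. Choose the sparse times $n_j=jk$, which makes the windows $[n_j-k+2,\,n_j]$ pairwise disjoint and contained in $(n_{j-1},\,n_j]$. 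Because at each fixed time the coins $\{D_w^L(t)\}$ are conditionally independent across vertices given $\mathcal F_{t-1}$, the chain rule gives, for every $j$ large enough that $n_{j-1}\ge T_0$,
\[ {\mathbb P}(I_v(n_j)=l \mid {\cal F}_{n_{j-1}}) \;\ge\; {\mathbb P}(\text{relevant coins in window }j \text{ equal }\pi_l \mid {\cal F}_{n_{j-1}}) \;\ge\; (\delta/2)^{M}, \]
since each of the $M$ conditional factors is bounded below by $\delta/2$ on the event $\{n_{j-1}\ge T_0\}$.

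Finally I would invoke the conditional (L\'evy) form of the second Borel--Cantelli lemma for the filtration ${\cal G}_j:={\cal F}_{n_j}$ and the events $B_j:=\{I_v(n_j)=l\}\in{\cal G}_j$. The bound above yields $\sum_j {\mathbb P}(B_j\mid{\cal G}_{j-1}) \ge \sum_{j\text{ large}} (\delta/2)^{M} = \infty$ almost surely, so $B_j$ occurs for infinitely many $j$ almost surely; that is, $I_v(n_j)=l$ for infinitely many $j$. The main obstacle is exactly that the events $\{I_v(n)=l\}$ are strongly dependent across $n$, so the ordinary second Borel--Cantelli lemma is unavailable; the device that overcomes this is to combine the no-atoms property of Theorem \ref{noatomsthm} (which keeps the finitely many relevant ancestor biases bounded away from $0$ and $1$) with the conditional Borel--Cantelli lemma applied along a sparse, window-disjoint sequence of times.
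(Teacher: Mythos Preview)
Your overall strategy is sound and closely parallels the paper's: fix a configuration of ancestor coins that forces $I_v=l$, slice time into disjoint windows of length $\sim k_v$, and use Theorem~\ref{noatomsthm} to keep the finitely many relevant limits $p_w$ in $(0,1)$. The paper then invokes Theorem~\ref{dynamicsthm2} (the asymptotic product structure) to pass to the limiting i.i.d.\ picture and run an ordinary Borel--Cantelli argument over the windows; you bypass Theorem~\ref{dynamicsthm2} and instead combine the martingale convergence of $P_w^L$ (Theorem~\ref{dynamicsthm1}) with the conditional (L\'evy) Borel--Cantelli lemma. That is a legitimate and slightly more elementary variant of the same idea.

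There is, however, a real gap in the step where you assert ${\mathbb P}(\text{coins in window }j=\pi_l\mid{\cal F}_{n_{j-1}})\ge(\delta/2)^M$ ``on the event $\{n_{j-1}\ge T_0\}$.'' The random time $T_0$ is not a stopping time: it is the last exit time of the $P_w^L$'s from $(\delta/2,1-\delta/2)$ and depends on the entire future, so $\{n_{j-1}\ge T_0\}\notin{\cal F}_{n_{j-1}}$. More to the point, the ``conditional factors'' in your chain-rule expansion are quantities of the form $P_w^L(t-1)$ (or $1-P_w^L(t-1)$) with $t$ strictly inside the window, and the conditional expectation averages these over \emph{all} futures compatible with ${\cal F}_{n_{j-1}}$, not just the realised one; for a generic future there is no reason these should stay in $(\delta/2,1-\delta/2)$ merely because they do along the realised path. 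The easy repair is the one-step Lipschitz bound
\[
|P_w^L(t)-P_w^L(t-1)|\le \frac{I_w(t-1)}{T_w(t)+2\alpha}\le \frac{k_v(k_v+1)/2}{t+2\alpha}=O(1/t),
\]
which holds for \emph{every} future. Over a window of fixed length $k_v-1$ the bias therefore moves by at most $O(1/n_{j-1})$, uniformly in the future. Hence the ${\cal F}_{n_{j-1}}$-measurable quantity $\varepsilon_j:=\min_w\min\bigl(P_w^L(n_{j-1}),\,1-P_w^L(n_{j-1})\bigr)$ controls all $M$ factors: each is at least $\varepsilon_j-O(1/j)$ regardless of the future. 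Since $\varepsilon_j\to\min_w\min(p_w,1-p_w)>0$ a.s., this gives a genuinely ${\cal G}_{j-1}$-adapted lower bound that is eventually at least $(\delta/2)^M$, and your conditional Borel--Cantelli step then goes through.
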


\begin{proof}
Let $v_1, ..., v_m$ be the $m=\frac{k_v(k_v+1)}{2}-1$ nodes above $v$ which can send sediment to $v$ and let $\vec{d}_1, ..., \vec{d}_{v_k} \in \{0,1\}^m$.  Let $N \geq 1$ and write $D_{\vec{v}}^L(n)$ for the vector $(D_{v_1}^L(n), ..., D_{v_m}^L(n))$.

\[ \lim_{N \to \infty} \lim_{T \to \infty} {\mathbb P} (D_{\vec{v}}^L(T+jk_v + 1) = \vec{d_1}, ..., D_{\vec{v}}^L(T+(j+1)k_v) = \vec{d}_{k_v} \textrm{ for some } 1 \leq j \leq N) \]
equals zero almost surely, by Theorem \ref{dynamicsthm2} and Theorem \ref{noatomsthm}.  But this probability dominates the probability of the event $\{D_{\vec{v}}^L(n+1) = \vec{d}_1, ..., D_{\vec{v}}^L(n+v_k+1) = \vec{d}_{v_k}$ for infinitely many $n \}^c$.  The result follows.
\end{proof}

Here an interesting picture of our network emerges.  On the one hand we may view the system as an infinite lattice (the lower half plane), where each node is a random input P\'{o}lya urn.  The output of the urns at depth $k$  at time $n$ becomes the input of the urns at depth $k+1$ at time $n+1$.  On the other hand, as remarked in section 1.3, we may first sample (non-independently) values $\{p_v : v \in {\mathbb Z}_{even}^2 \}$ from the de Finetti measures $\{ \theta_k : k \geq 1\}$ to create an infinite array.  As time $n$ approaches infinity, the behavior of the system approaches the behavior of the same network in which each node $v$ chooses to send its current load left with probability $p_v$ and right with probability $1-p_v$, independently at each second.  Therefore this picture is of a network of two variables, a realization of values $p_v$ from the de Finetti measures, and realization of dynamics which coincides with the dynamics of a much simpler network.  This second network is an obvious generalization of the Dynamical Discrete Web.

Figure \ref{definettifig} shows histograms for the de Finetti measures $\theta_k$ for $k = 2, 5, 9$ and for values of $\eta = .5, 1, 2$.  One sees that the measures become more biased as $k$ increases (for fixed $\eta$).  In other words, the mass of $\theta_k$ is concentrated on domains closer to 0 and 1 than is the mass of $\theta_{k-1}$.   This would seem to imply that the expected asymptotic switching rate of a node at level $k$ (which is $2p_v(1-p_v)$) must decrease with $k$.  Similarly, if $k$ is fixed and $\eta$ decreases to 0, it seems that the expected switching rate should decrease.   

\begin{figure*}
\begin{center}
\begin{tabular}{c c c}
$\eta = \frac{1}{2}$, row 2 & $\eta = \frac{1}{2}$, row 5 & $\eta = \frac{1}{2}$, row 9 \\
\scalebox{0.17}{\includegraphics*[viewport = 0in 2.65in 8.5in 8.3in]{./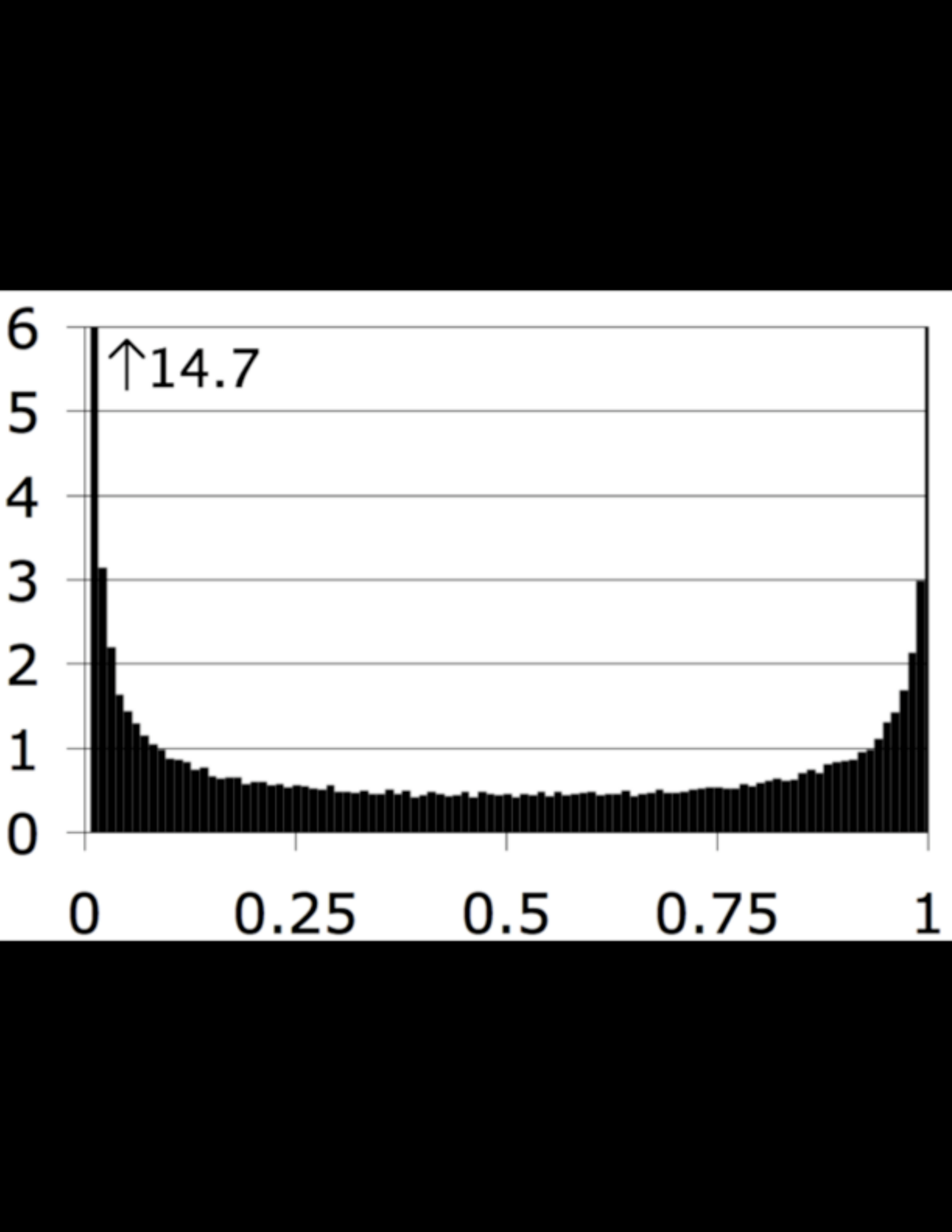}} &
\scalebox{0.17}{\includegraphics*[viewport = 0in 2.65in 8.5in 8.3in]{./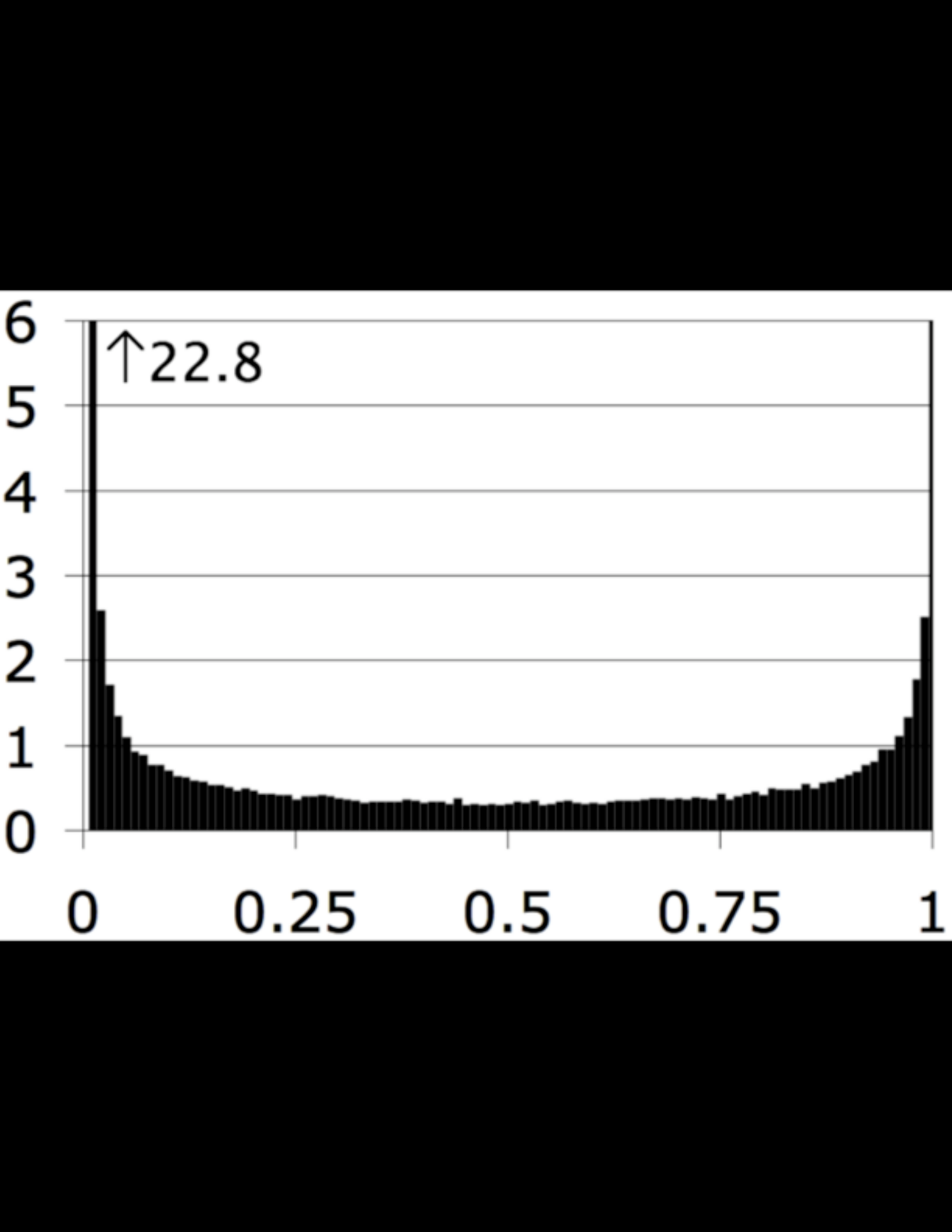}} &
\scalebox{0.17}{\includegraphics*[viewport = 0in 2.65in 8.5in 8.3in]{./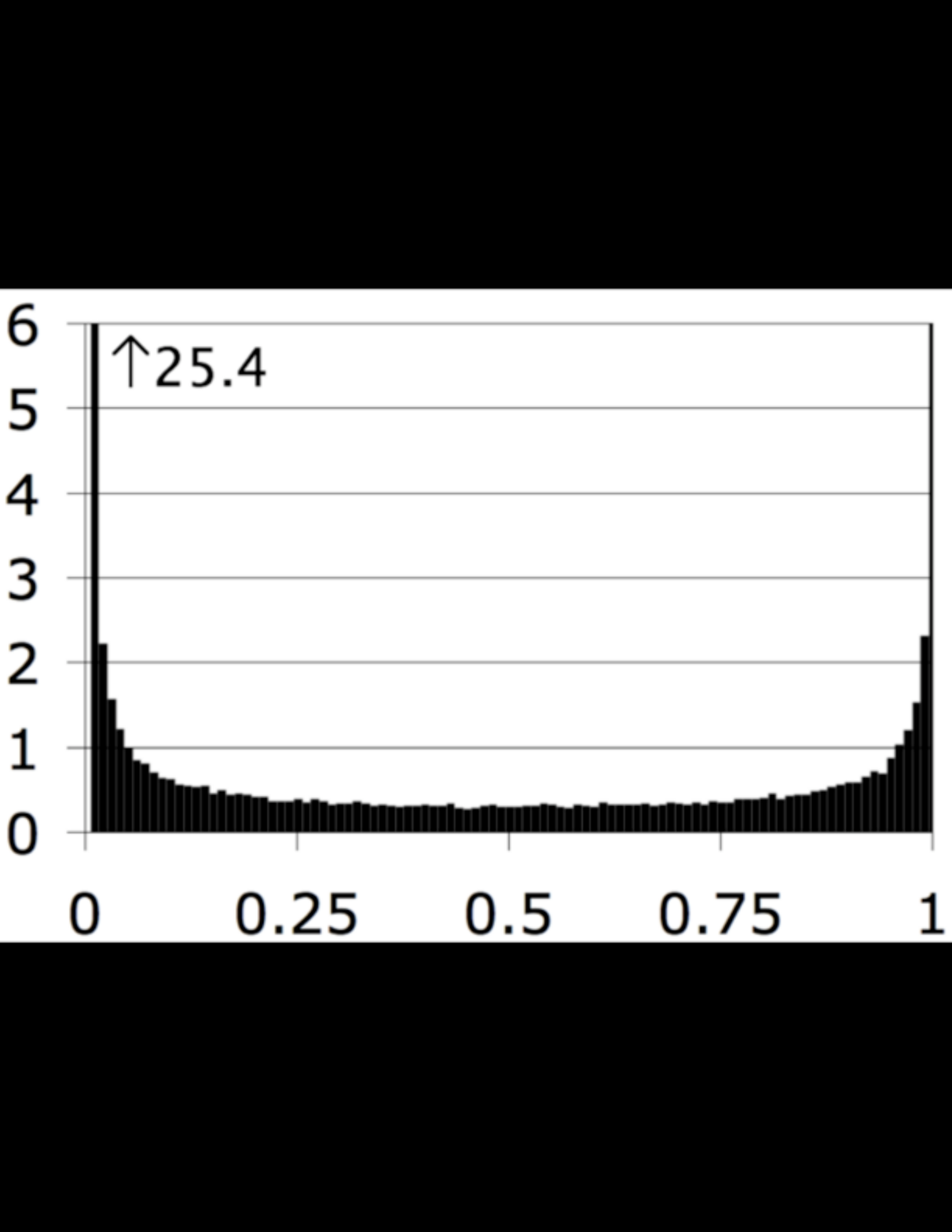}} \\
\\
$\eta = 1$, row 2 & $\eta = 1$, row 5 & $\eta = 1$, row 9 \\
\scalebox{0.18}{\includegraphics*[viewport = 0in 2.87in 8.5in 8.15in]{./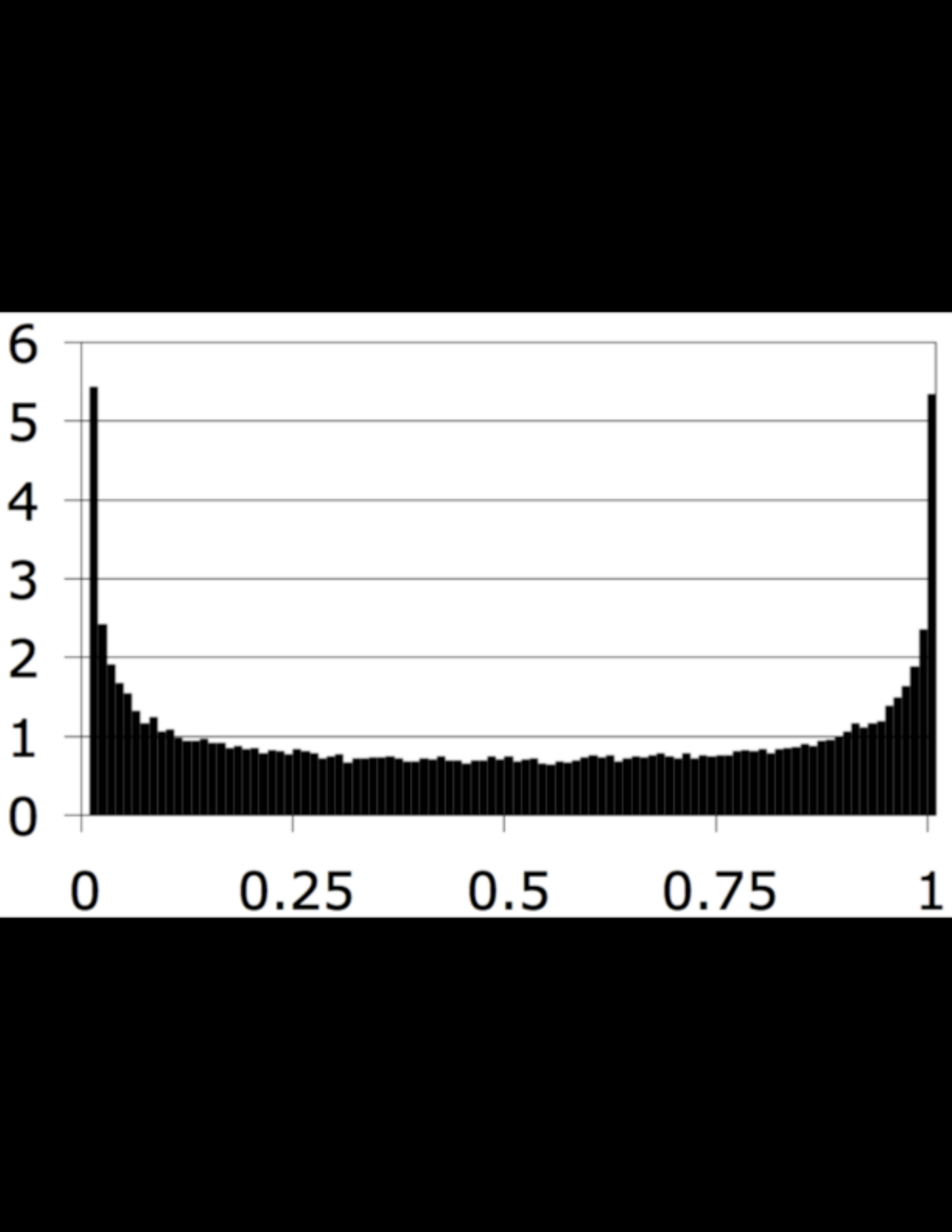}} &
\scalebox{0.17}{\includegraphics*[viewport = 0in 2.65in 8.5in 8.3in]{./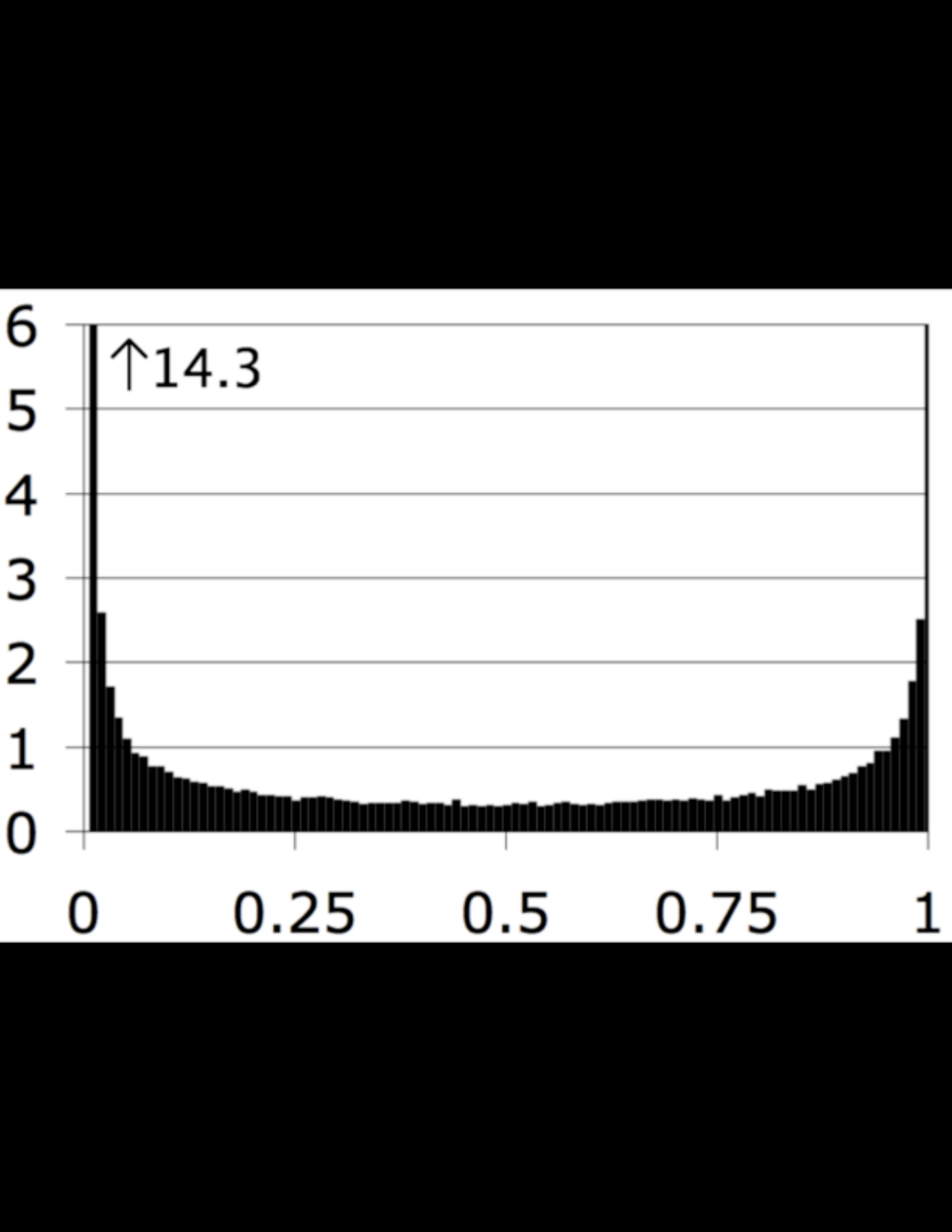}} &
\scalebox{0.17}{\includegraphics*[viewport = 0in 2.65in 8.5in 8.3in]{./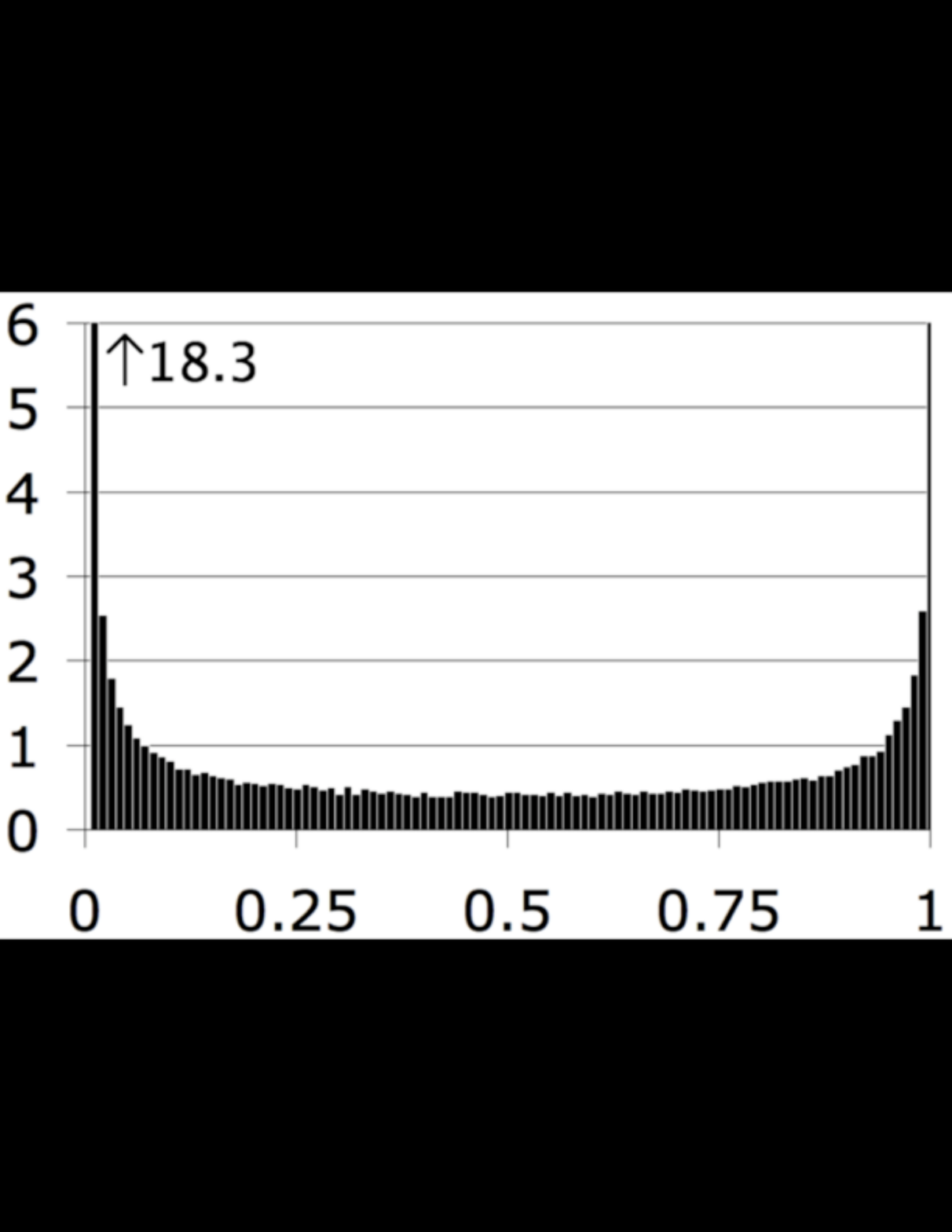}} \\
\\
$\eta = 2$, row 2 & $\eta = 2$, row 5 & $\eta = 2$, row 9 \\
\scalebox{0.17}{\includegraphics*[viewport = 0in 2.65in 8.5in 8.3in]{./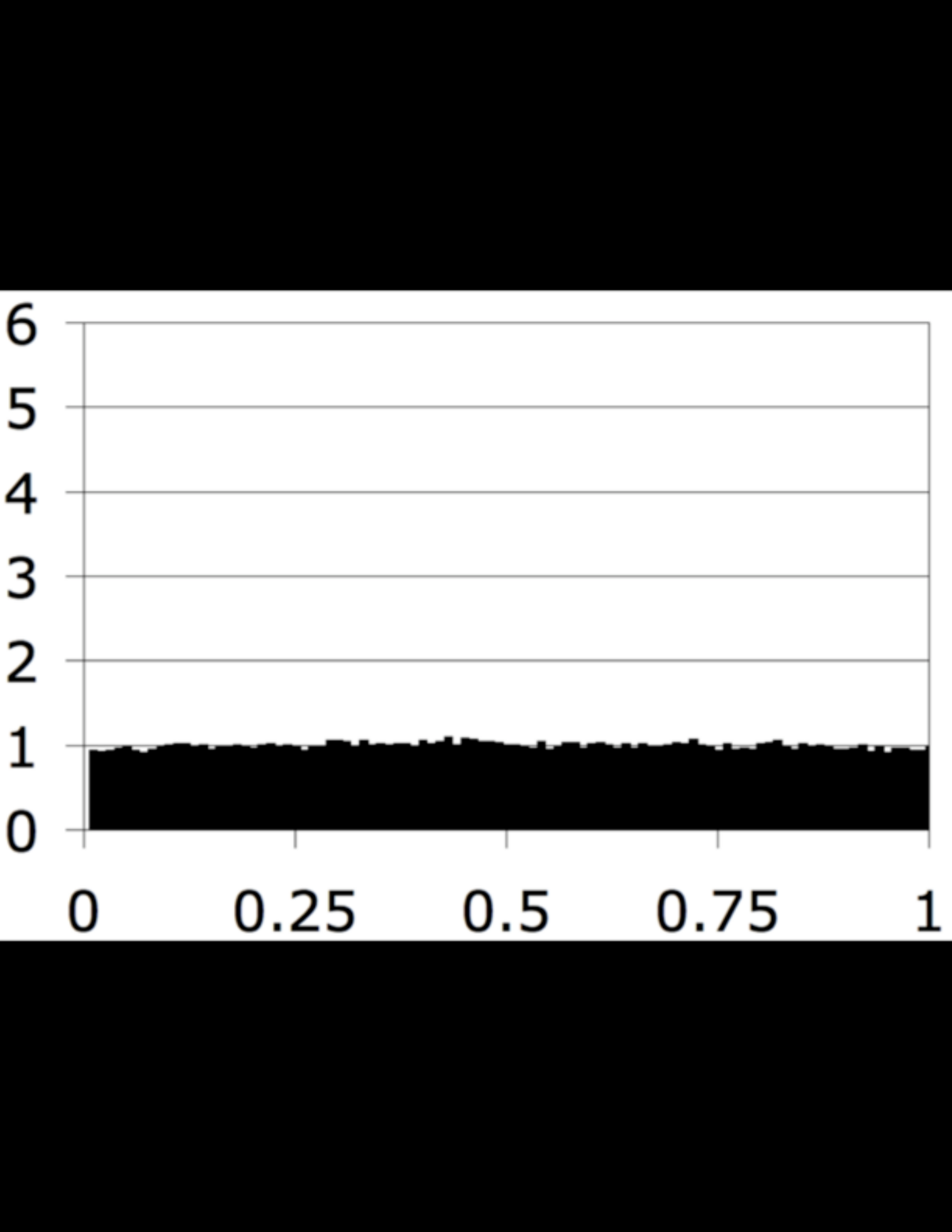}} &
\scalebox{0.17}{\includegraphics*[viewport = 0in 2.65in 8.5in 8.3in]{./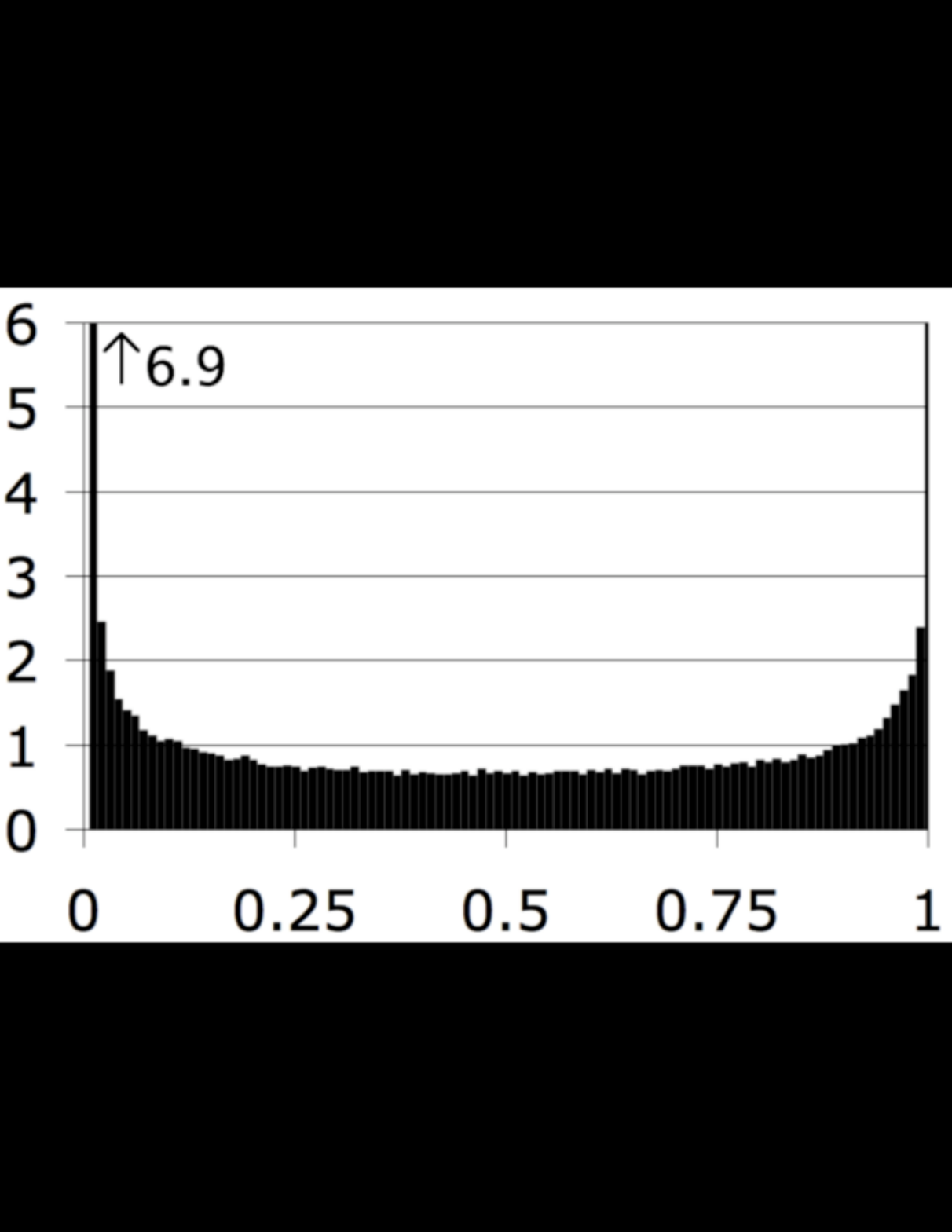}} &
\scalebox{0.17}{\includegraphics*[viewport = 0in 2.65in 8.5in 8.3in]{./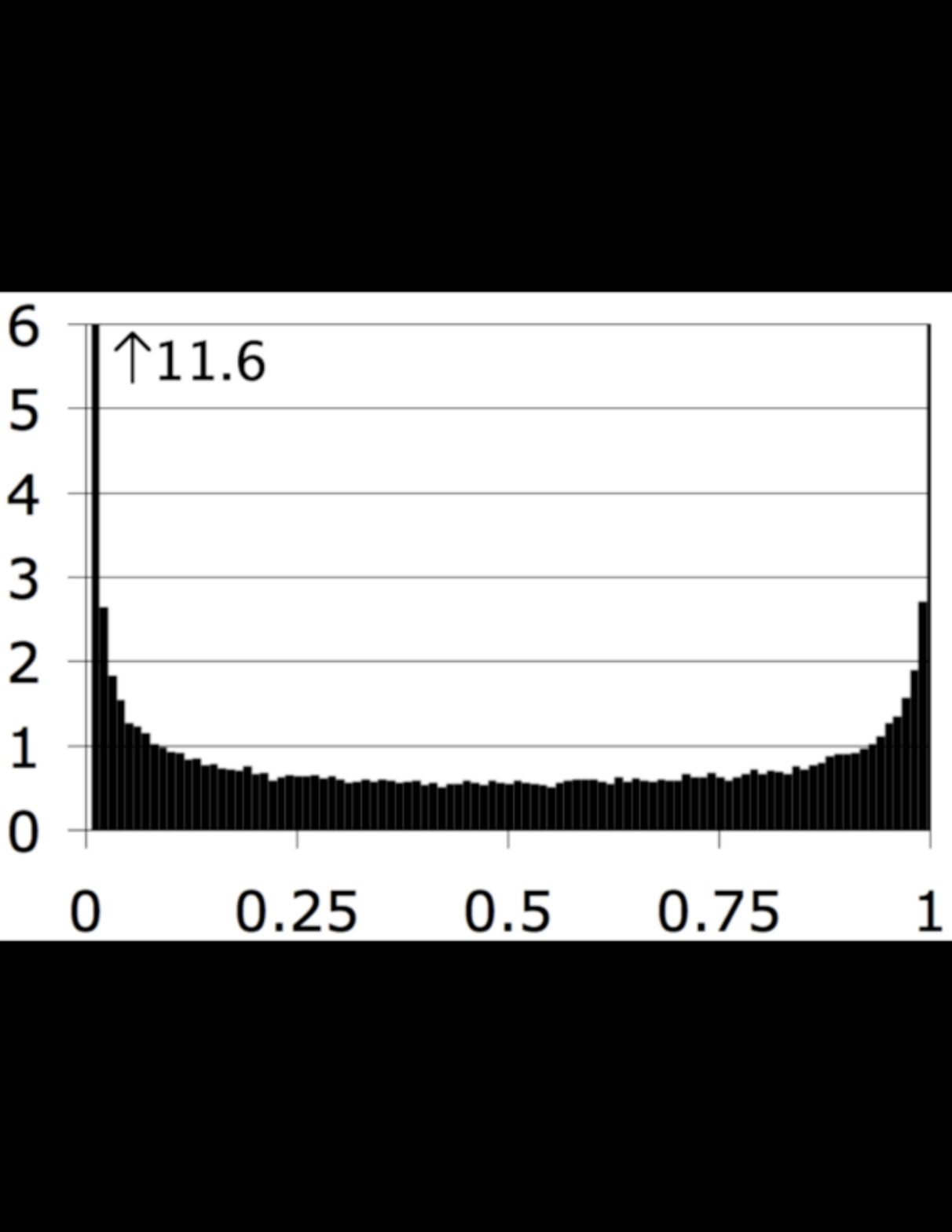}} \\
\end{tabular}
\end{center}
\caption{Definetti measures for $k = 2, 5, 9$ (from left to right) and $\eta = .5, 1, 2$ (from top to bottom).}
\label{definettifig}
\end{figure*}

\begin{figure*}
\begin{center}
\scalebox{0.55}[0.45]{\includegraphics*[viewport = 0in 2.75in 8.5in 8.3in]{./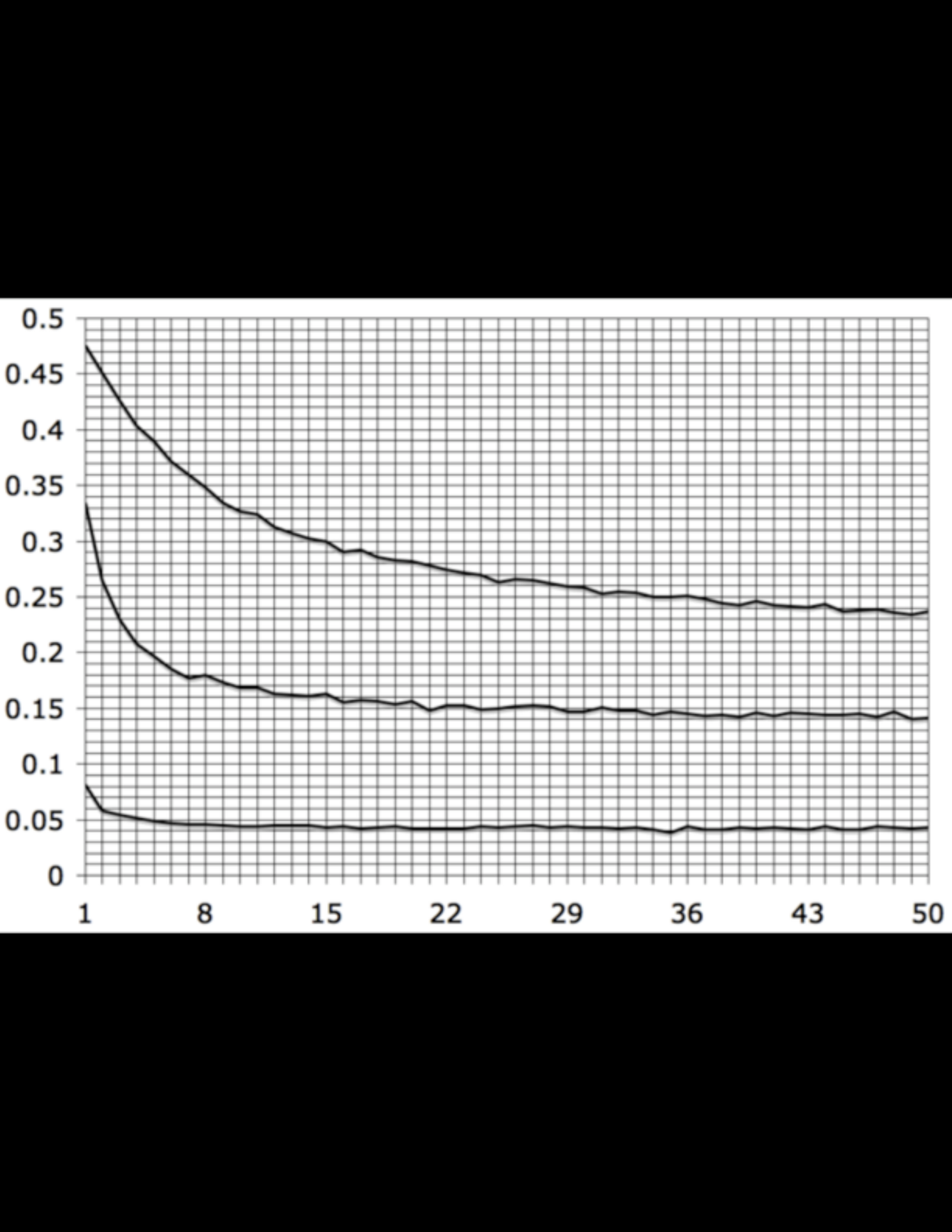}}
\end{center}
\caption{Average switch rate (vertical axis) versus row (horizontal axis).  The values of $\eta$ are 10 (top curve), 1 (middle curve), .1 (bottom curve).}
\label{switchratefig}
\end{figure*}

Figure \ref{switchratefig} represents data given by simulations conducted with an erosion network with width $10^5$, depth 50, and $\eta = .1, 1,$ or 10.  The simulation ran for $n = 1000$ steps and at the end, switch rates for each node in the network were computed.  In each row, each node's rate was averaged.  Since two nodes $v_1$ and $v_2$ with the same depth $k$ have independent behavior as long as they are at least a distance of $2k$ apart, the ergodic theorem gives that, as the network size approaches infinity, the resulting average should resemble the expected switch rate for a row.  The above results results were averaged by row over 3 independent trials.  Finally, the data were plotted by row.  Not only do the average switch rates appear to decrease as $k$ increases, there appears to be a non-trivial (i.e. non-zero and $\eta$ dependent) limit for the switch rate.  This indicates that for at least some values of $\eta$, the limit of the measures $\theta_k$ (if it exists) is most likely not equal to $\frac{1}{2}(\delta_0 + \delta_1)$.

\subsubsection{Catastrophes}

Next, we study the following situation.  Suppose a node $v$ at a large depth $k$ (for this section we assume the depth is at least 2) starts with a small input load and keeps a relatively small input load until a much later time.  Then $v$'s load changes dramatically.  If this new load is sufficiently large, it could bring $v$'s de Finetti measure much closer to $\frac{1}{2}(\delta_0 + \delta_1)$.  This analysis is from the point of view of the node $v$, whereas the analysis of the last half of the section will be from the point of view of the parent.

Let 
\[ A_v(n) = \frac{T_v(n-1)}{n} \textrm{ , } n \geq 1 \]

\begin{definition}
For any $n \geq 1$, define the \textbf{flood ratio} $F_v(n) = \frac{I_v(n)}{A_v(n)}$.  For $c \geq 1$, we say that a \textbf{flood} of order $c$ occurs at time $n$ if $F_v(n) \geq c$.
\end{definition}

\begin{remark} 
Since $I_v(n), A_v(n) \in [1,\frac{1}{2}(k(k+1))]$, we have
\begin{equation}
\label{floodbounds}
\frac{2}{k(k+1)} \leq F_v(n) \leq \frac{k(k+1)}{2}
\end{equation}
\end{remark}

\begin{proposition}
\label{floodprop}
For any $v$, $A_v(n)$ has a limit a.s.  Therefore,

\[ \liminf_{n \to \infty} F_v(n) < \limsup_{n \to \infty} F_v(n) \]
\end{proposition}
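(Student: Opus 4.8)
The plan is to separate the two assertions, treating the displayed strict inequality as an easy consequence of the limit's existence together with Corollary \ref{loaddivcor}. Write $A_v(n) = T_v(n-1)/n = \frac{1}{n}\sum_{j=0}^{n-2} I_v(j)$, a Cesàro average of the input loads. Since $1 \le I_v(j) \le \frac{k(k+1)}{2}$, the same bounds pass to any limit, so once I know $A_v(n) \to L$ a.s. I know $L \in [1,\frac{k(k+1)}{2}]$ is finite and strictly positive. By Corollary \ref{loaddivcor} (and the standing assumption that $v$ has depth $k \ge 2$), $I_v(n) = 1$ for infinitely many $n$ and $I_v(n) = \frac{k(k+1)}{2}$ for infinitely many $n$. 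Along the first subsequence $F_v(n) = I_v(n)/A_v(n) \to 1/L$, and along the second $F_v(n) \to \frac{k(k+1)}{2L}$; these two limits differ for $k \ge 2$, so $\liminf_n F_v(n) \le 1/L < \frac{k(k+1)}{2L} \le \limsup_n F_v(n)$.

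The substance is therefore the a.s. convergence of $A_v(n)$, which I would prove by induction on the depth $k$. For $k=1$ the input is constant, $I_v(j) \equiv r$, so $A_v(n) = r(n-1)/n \to r$ and the base case is immediate. For the inductive step, let $w_1,w_2$ be the parents of $v$ (depth $k-1$) and decompose $I_v(j) = \mathbb{E}(I_v(j)\mid \mathcal{F}_{j-1}) + \big(I_v(j) - \mathbb{E}(I_v(j)\mid \mathcal{F}_{j-1})\big)$. The bracketed terms are martingale increments bounded by $\frac{k(k+1)}{2}$, so the martingale $M_n = \sum_{j=1}^{n}\big(I_v(j) - \mathbb{E}(I_v(j)\mid\mathcal{F}_{j-1})\big)$ satisfies $\mathbb{E}\big((M_j-M_{j-1})^2\big) \le C$. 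Exactly as in the proof of the switching-rate theorem, I would form the martingale $\sum_j (M_j-M_{j-1})/j$, note it is $L^2$-bounded since $\sum_j C/j^2 < \infty$, deduce its a.s. convergence, and conclude $M_n/n \to 0$ by Kronecker's lemma. Hence $A_v(n)$ shares the a.s. limiting behaviour of $\frac{1}{n}\sum_j \mathbb{E}(I_v(j)\mid\mathcal{F}_{j-1})$.

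It remains to analyze this averaged conditional expectation, which is where the recursion on depth enters and which I expect to be the main obstacle. From the input recursion, $\mathbb{E}(I_v(j)\mid\mathcal{F}_{j-1}) = I_{w_1}(j-1)(1-P_{w_1}^L(j-1)) + I_{w_2}(j-1)P_{w_2}^L(j-1) + r$. Because $P_{w_i}^L(j-1) \to p_{w_i}$ a.s. (Theorem \ref{dynamicsthm1}) and $I_{w_i}$ is bounded, the Cesàro average of $I_{w_i}(j-1)\big(P_{w_i}^L(j-1)-p_{w_i}\big)$ tends to $0$, so I may replace each $P_{w_i}^L(j-1)$ by $p_{w_i}$ without changing the limit of the average. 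This leaves $(1-p_{w_1})\cdot\frac{1}{n}\sum_j I_{w_1}(j-1) + p_{w_2}\cdot\frac{1}{n}\sum_j I_{w_2}(j-1) + r$, and each average is, up to a bounded index shift and boundary terms negligible after division by $n$, equal to $A_{w_1}(n)$ and $A_{w_2}(n)$. By the induction hypothesis these converge a.s., so $A_v(n) \to (1-p_{w_1})A_{w_1}^\infty + p_{w_2}A_{w_2}^\infty + r$ a.s. The delicate points are the interchange of the limit $P_{w_i}^L \to p_{w_i}$ with the Cesàro average and the verification that the index shifts and small-$j$ boundary terms are genuinely negligible; neither is deep, but both must be handled carefully to keep the statement almost sure rather than merely in expectation.
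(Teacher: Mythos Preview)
Your proof is correct, and both it and the paper argue by induction on depth, arriving at the same recursive formula $A_v^\infty = (1-p_{w_1})A_{w_1}^\infty + p_{w_2}A_{w_2}^\infty + r$; the treatment of the strict inequality via Corollary~\ref{loaddivcor} is likewise identical. The difference lies in the inductive step. You decompose $I_v(j)$ as conditional expectation plus a bounded martingale increment, then invoke an $L^2$-martingale/Kronecker argument to kill the noise term before replacing $P_{w_i}^L$ by its limit $p_{w_i}$. The paper bypasses this entirely by working with cumulative totals rather than instantaneous inputs: since $T_v(n-1) = (n-1) + T_{w_1}^R(n-2) + T_{w_2}^L(n-2)$ exactly, it suffices to show $T_{w_1}^R(n)/n$ converges, and this follows by factoring
\[
\frac{T_{w_1}^R(n)}{n} \;=\; \frac{T_{w_1}^R(n)}{T_{w_1}(n)}\cdot\frac{T_{w_1}(n)}{n},
\]
where the first factor tends to $1-p_{w_1}$ by the load-fraction convergence already established after Theorem~\ref{dynamicsthm1}, and the second is $A_{w_1}$, handled by the induction hypothesis. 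So the paper's route is shorter and avoids the auxiliary martingale machinery, at the cost of exploiting the specific accounting identity; your route is more general-purpose and would transfer more readily to models where such an exact telescoping identity is unavailable.
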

\begin{proof}
We show the first statement by induction on depth($k$).  Clearly this is true if depth($v) = 1$.  Otherwise, let $w_1$ be the left parent of $v$ and assume that for all nodes $w'$ with depth equal to that of $w_1$, $A_{w'}(n)$ has a limit.  Let $N_{w_1}^R(n)$ be the number of $i \leq n$ such that $D_{w_1}^R(i) = 0$.  By Lemmas \ref{dynamicsthm1} and \ref{freqlemma}, 

\[ \lim_{n \to \infty} \frac{T_{w_1}^R(n)}{n+1} = \lim_{n \to \infty} \frac{T_{w_1}^R(n) T_{w_1}(n)}{(n+1)T_{w_1}(n)} = (1-p_{w_1}) \lim_{n \to \infty} A_{w_1}(n+1) \]
exists.  The same argument shows that, if $w_2$ is the right parent of $v$, $\lim_{n \to \infty} \frac{T_{w_2}^L(n)}{n+1}$ exists.  Therefore,

\[ \lim_{n \to \infty} \frac{T_v(n-1)}{n} = \lim_{n \to \infty} \frac{n-1 + T_{w_1}^R(n-2) + T_{w_2}^L(n-2)}{n} \]
exists.  

For the second statement of the proposition, we use Lemma \ref{loaddivcor} to see that

\[ \liminf_{n \to \infty} F_v(n) = \frac{1}{\lim_{n \to \infty} A_v(n)} < \frac{k(k+1)}{2 \lim_{n \to \infty} A_v(n)} = \limsup_{n \to \infty} F_v(n) \]

\end{proof}

\begin{remark}
\label{rightavgrmk}
The above proof shows also that $\lim_{n \to \infty} \frac{T_v^R(n)}{n+1}$ exists and equals $(1-p_v)\lim_{n \to \infty}A_v(n)$.
\end{remark}

\begin{remark}
\label{noatomsavgloadrmk}
A simple extension of the above proof using Theorem \ref{noatomsthm} shows that for any level $k$, the distribution of the time-average of the load for level $k$ has no atoms.
\end{remark}

From the previous proposition we know that if $v$ has an asymptotic average input load $A_v$, then $v$ will have infinitely many floods of order $\frac{k(k+1)}{2A_v + \epsilon}$ for any $\epsilon > 0$.  This number can be quite large if $A_v$ is small.  We study numerically the rates of these large floods as they relate to both depth($v$) and $\eta$.

For a fixed node $v$, define the random measure 

\[ \mu_{v,n} = \sum_{i = 2}^n \delta_{F_v(i)} \]
Let $\{v_i\}_{i \geq 1}$ be an enumeration of the nodes with the same depth as that of $v$.  By the ergodic theorem, the average

\[ \frac{1}{M} \sum_{i=1}^M\mu_{v_i,n} \to {\mathbb E}(\mu_{v,n}) \]
as $M \to \infty$.  See Figure \ref{floodfig}.  The graphs come from a simulation run for $10^5$ seconds on a network with a width of $10^4$ nodes and a depth of 50 nodes.  We graphed the density function for the measure $\frac{1}{10^8}\sum_{i=1}^{10000} \left[ \mu_{v_i,10^5} - \mu_{v_i,9000} \right]$ in an attempt to approximate $\frac{1}{n-9000}{\mathbb E}(\mu_{v,n})$ for $n$ large and for $v$ with depth 5, 20, and 50.  The reason we subtracted $\mu_{v_i,9000}$ is to decrease the effect of small times, during which the ratio $F_v(n)$ is likely to be an integer.

As $\eta \to 0$ with a fixed row or as the row increases with fixed $\eta$, each measure seems to concentrate its mass at 0 and 1.  In other words, the measure of any interval which does not include either of these two points appears to approach 0.  In addition, Table \ref{largefloodtable} shows that as $\eta \to 0$ with a fixed row or as the row increases with fixed $\eta$, the expected fraction of time during which a large flood occurs (ratio above 5) increases.  Since the time average of flood ratios approaches 1 as $n \to \infty$, the above facts indicate a trend that as $\eta \to 0$ or as the row increases, the time variance of measures increases, giving more possible variability of the flood ratios.  Although this variance seems to increase, values near 1 (on the x-axis) show that the fraction of time flood ratios spend near 1 increases.  In spirit, this is in accordance with previous results, as we explain.  Figure \ref{switchratefig} shows that as the row increases, the expected switching rate of a node decreases.  It is reasonable to believe that the same conclusion holds if the depth is fixed but $\eta$ decreases.  Therefore the network prefers to be more static in these circumstances and we would expect a node to receive a relatively constant load, forcing flood ratios to be near 1.

\begin{figure*}
\begin{center}
\begin{tabular}{c c c}
$\eta = \frac{1}{10}$, row 5 & $\eta = \frac{1}{10}$, row 20 & $\eta = \frac{1}{10}$, row 50 \\ 
\vspace*{.1in}
\scalebox{0.17}{\includegraphics*[viewport = 0in 2.8in 8.5in 8.2in]{./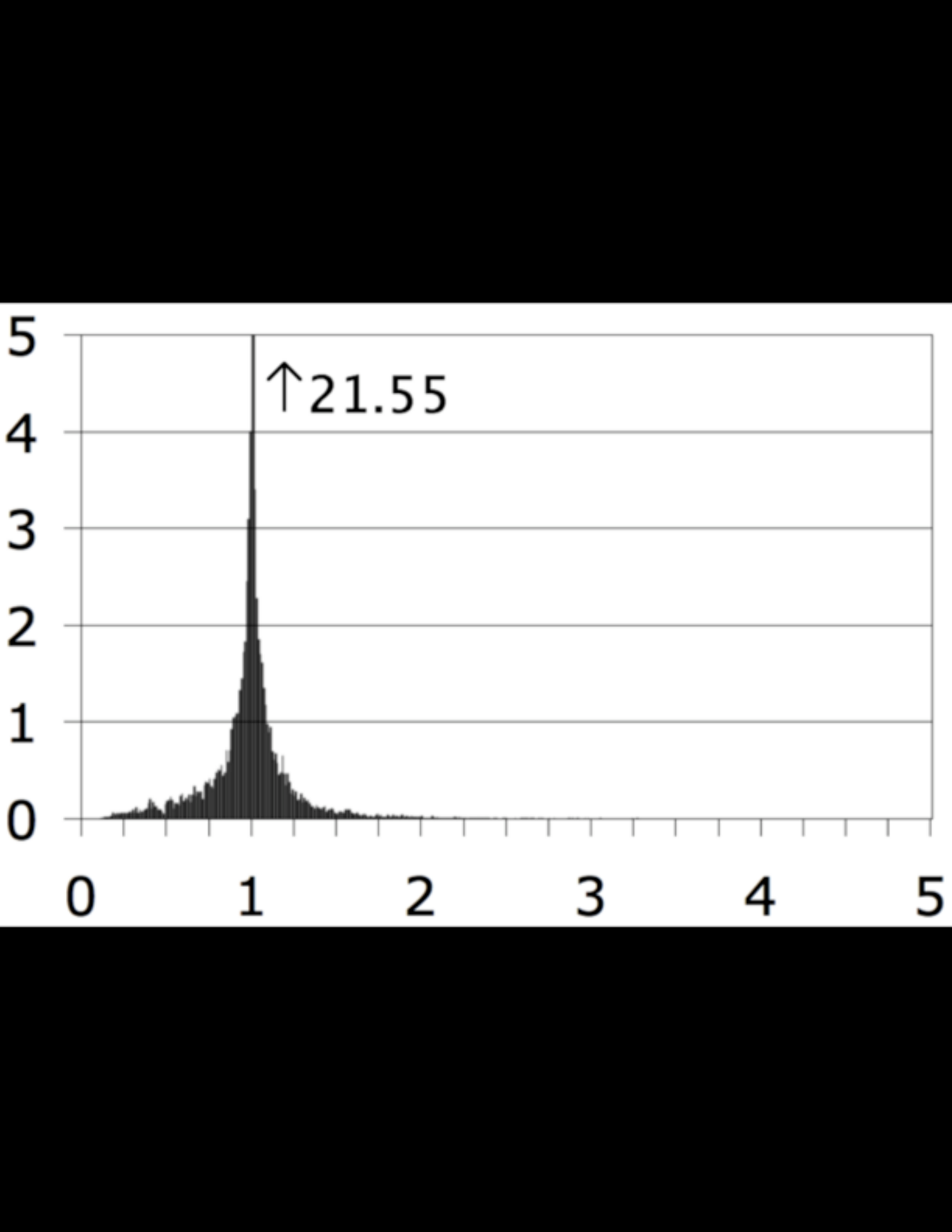}} &
\scalebox{0.17}{\includegraphics*[viewport = 0in 2.8in 8.5in 8.2in]{./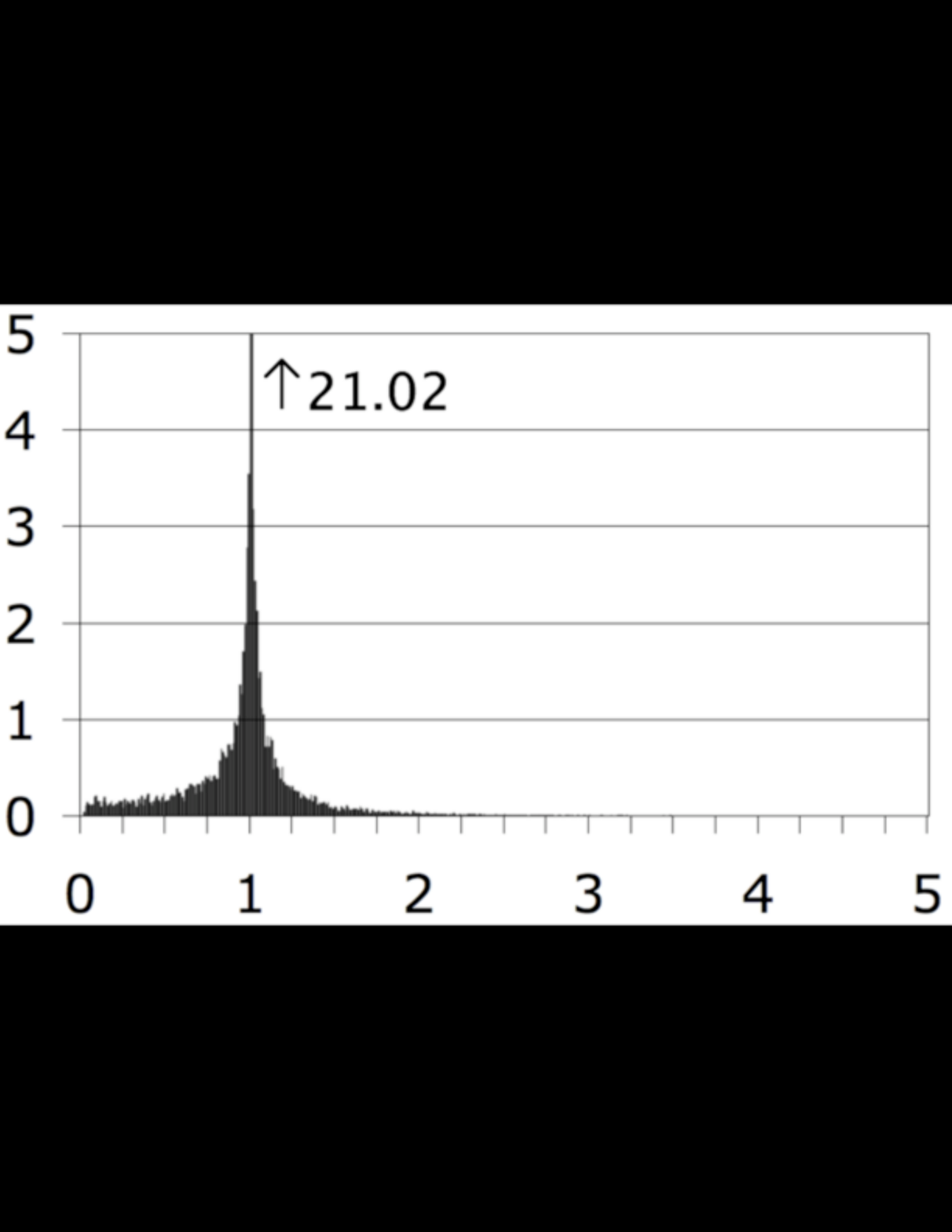}} &
\scalebox{0.17}{\includegraphics*[viewport = 0in 2.8in 8.5in 8.2in]{./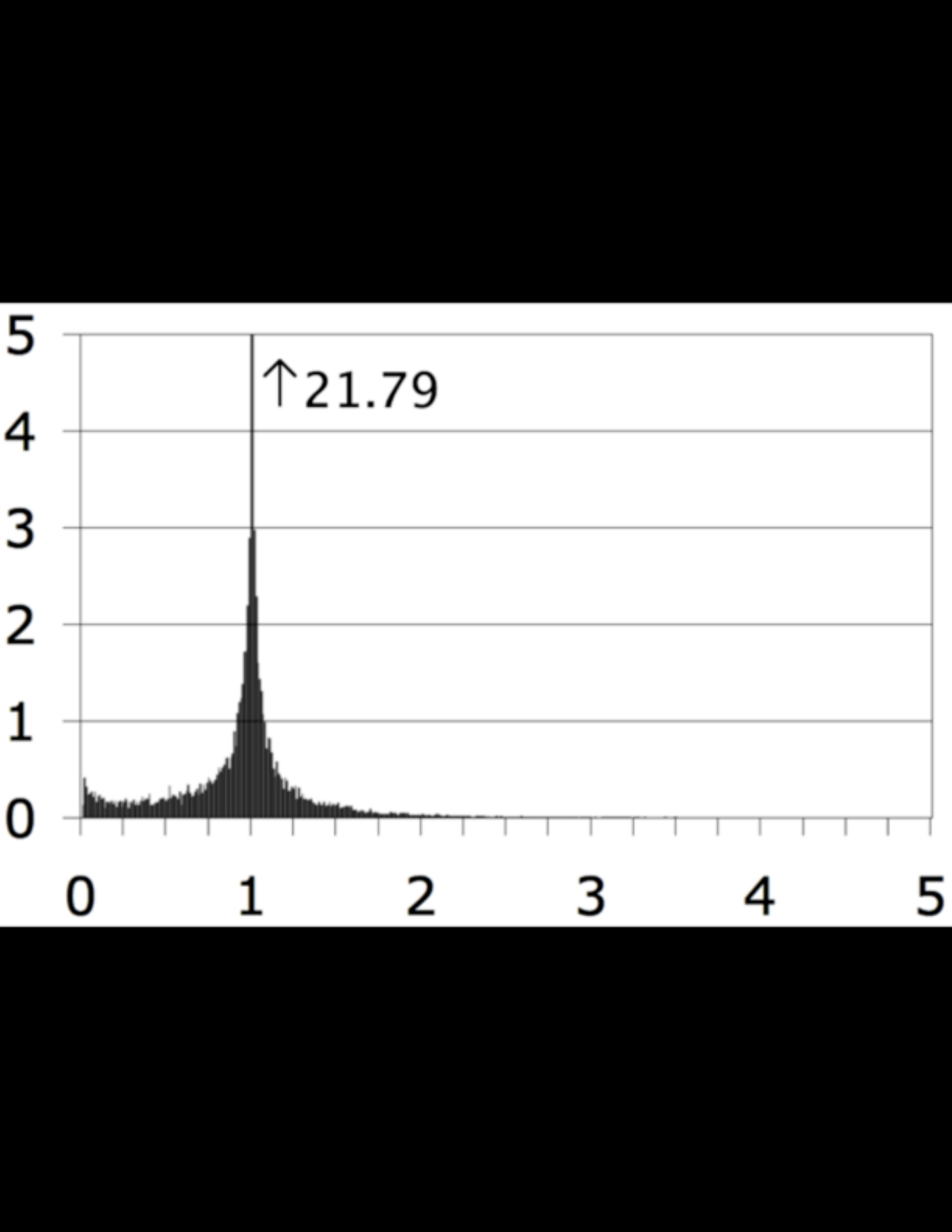}} \\
$\eta = 1$, row 5 & $\eta = 1$, row 20 & $\eta = 1$, row 50 \\
\scalebox{0.17}{\includegraphics*[viewport = 0in 2.8in 8.5in 8.2in]{./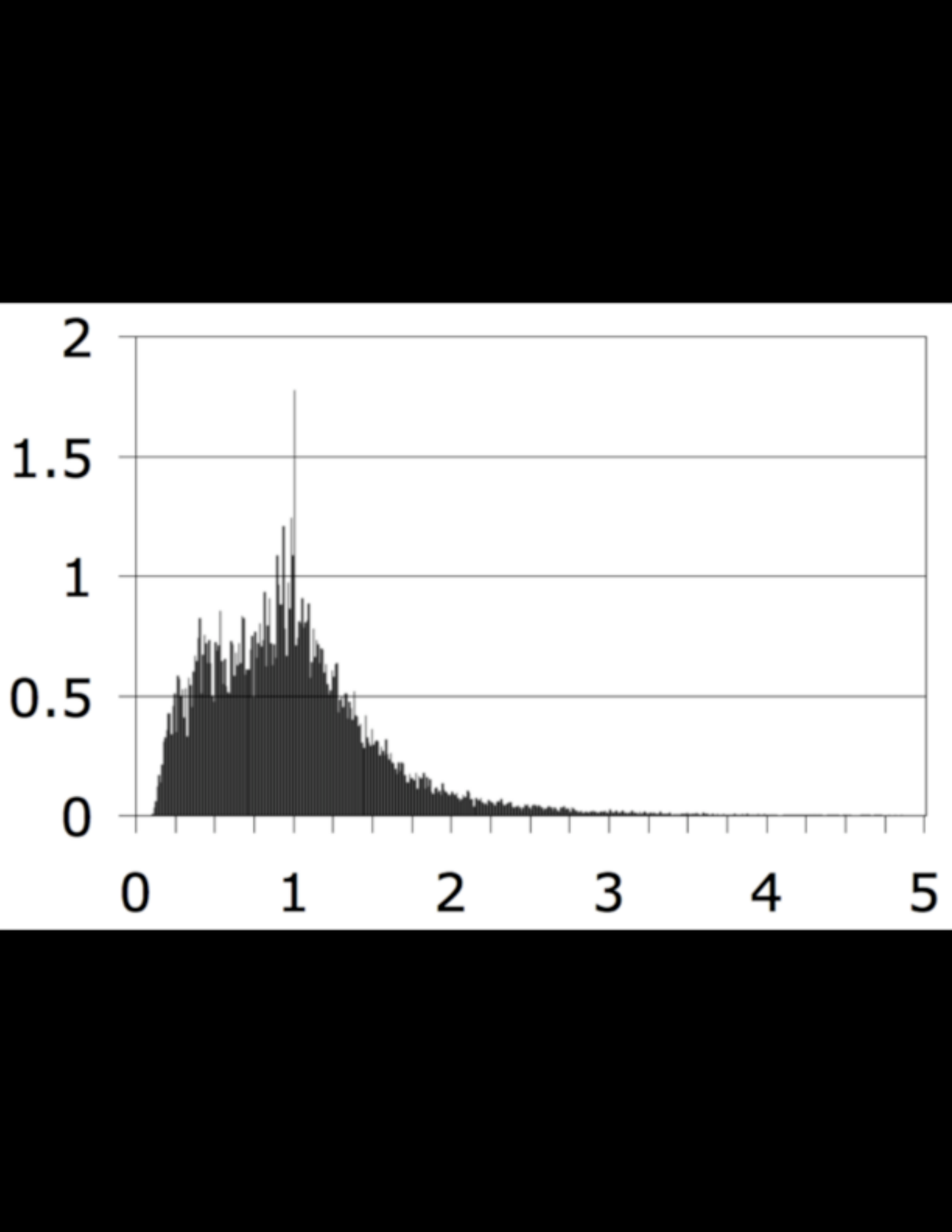}} &
\scalebox{0.17}{\includegraphics*[viewport = 0in 2.8in 8.5in 8.2in]{./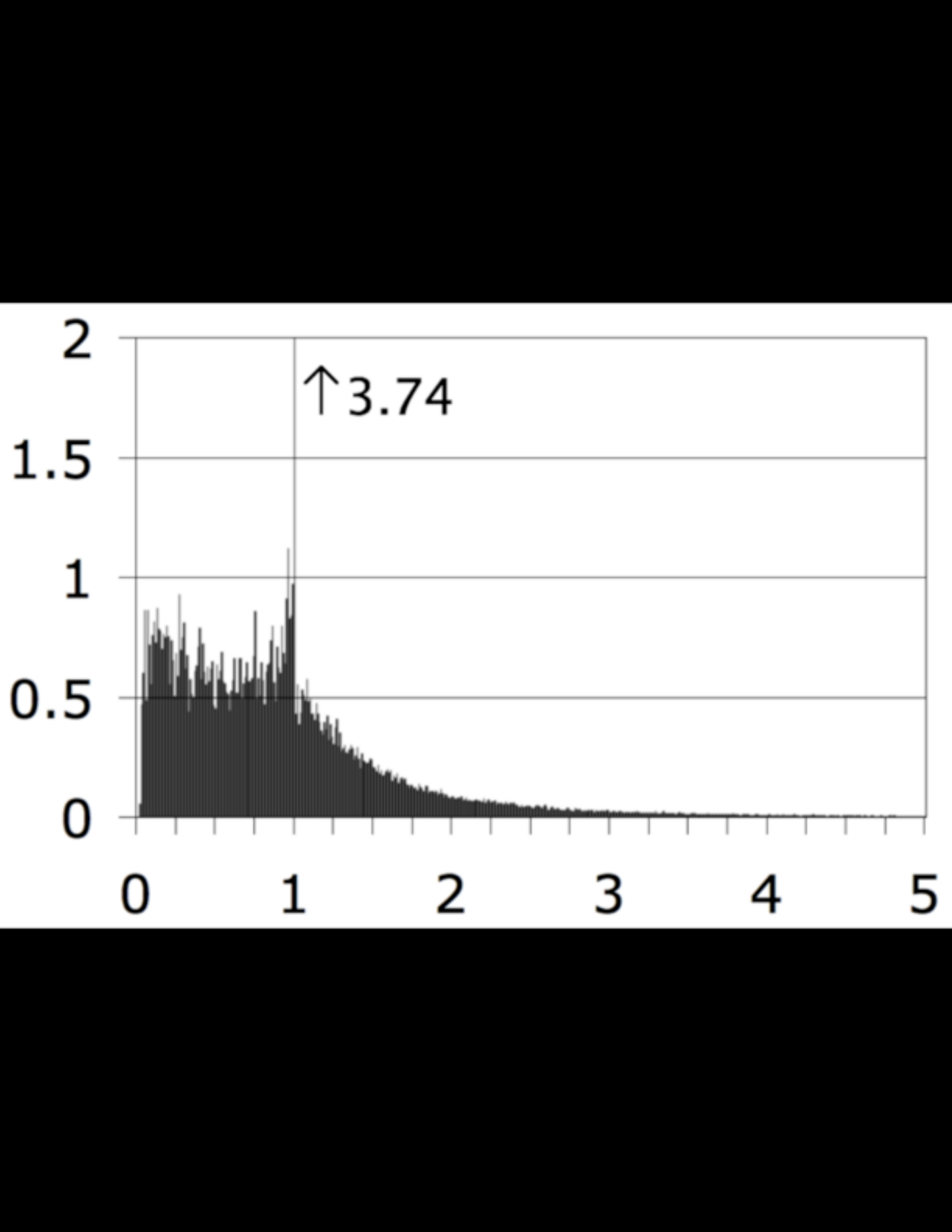}} &
\scalebox{0.17}{\includegraphics*[viewport = 0in 2.8in 8.5in 8.2in]{./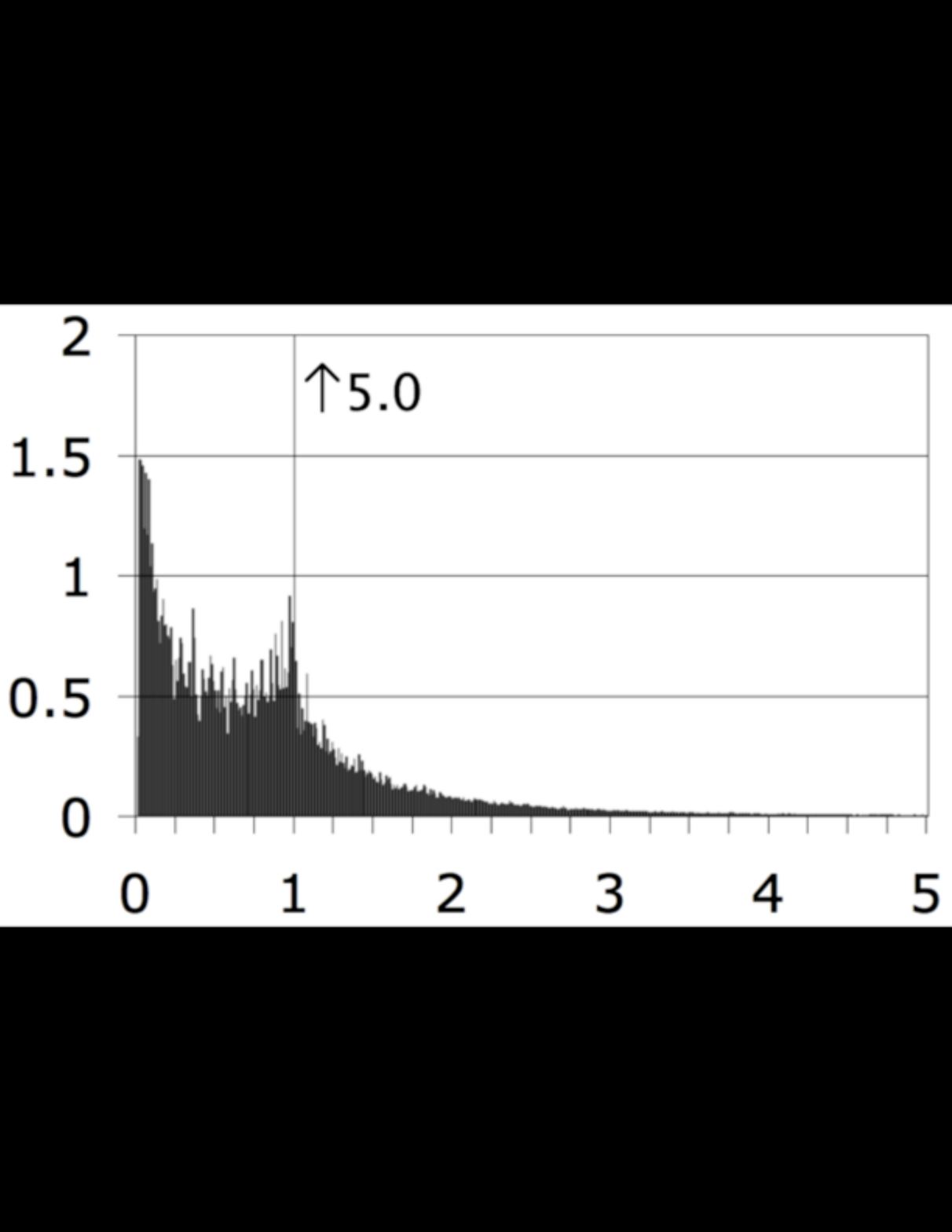}} \\
$\eta = 10$, row 5 & $\eta = 10$, row 20 & $\eta = 10$, row 50 \\
\scalebox{0.17}{\includegraphics*[viewport = 0in 2.8in 8.5in 8.2in]{./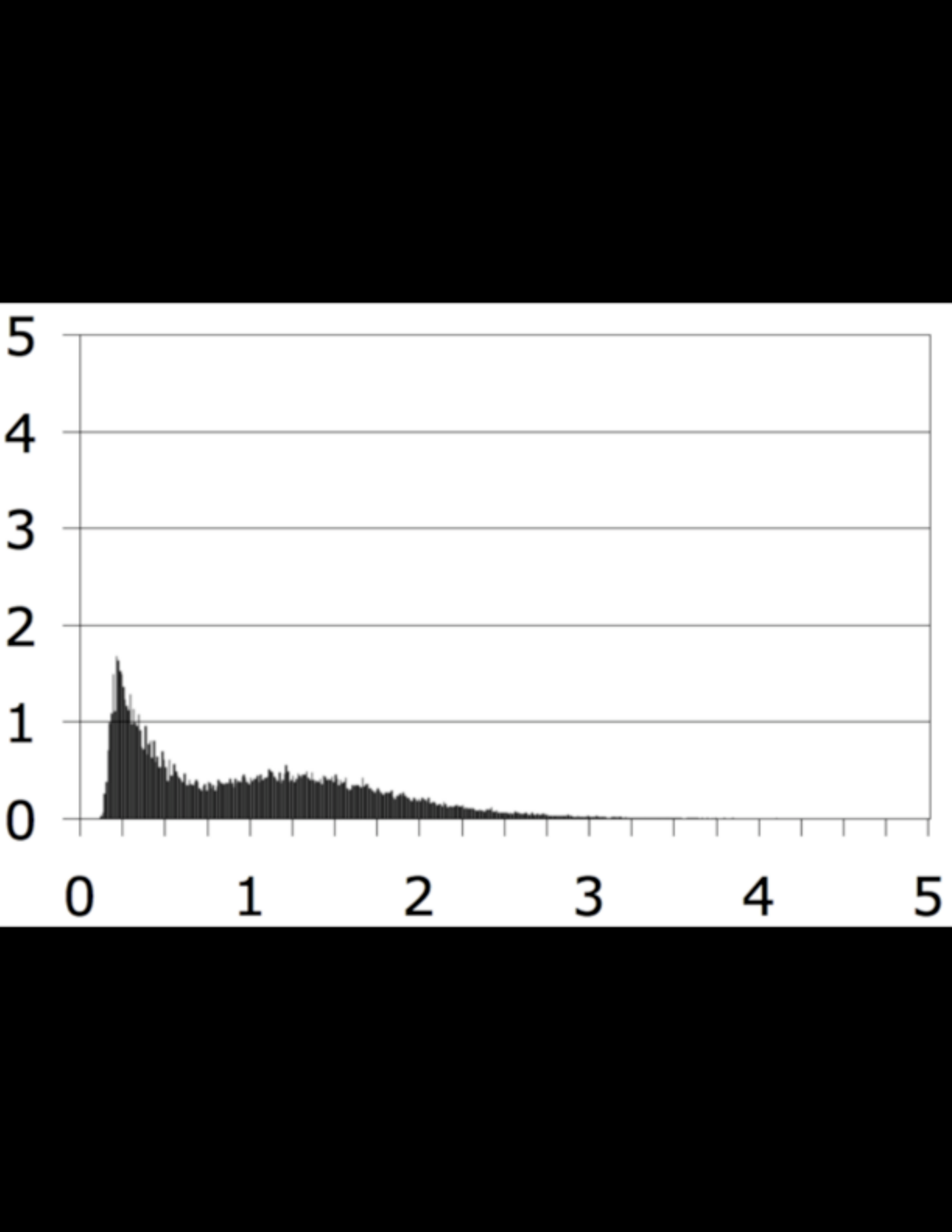}} &
\scalebox{0.17}{\includegraphics*[viewport = 0in 2.8in 8.5in 8.2in]{./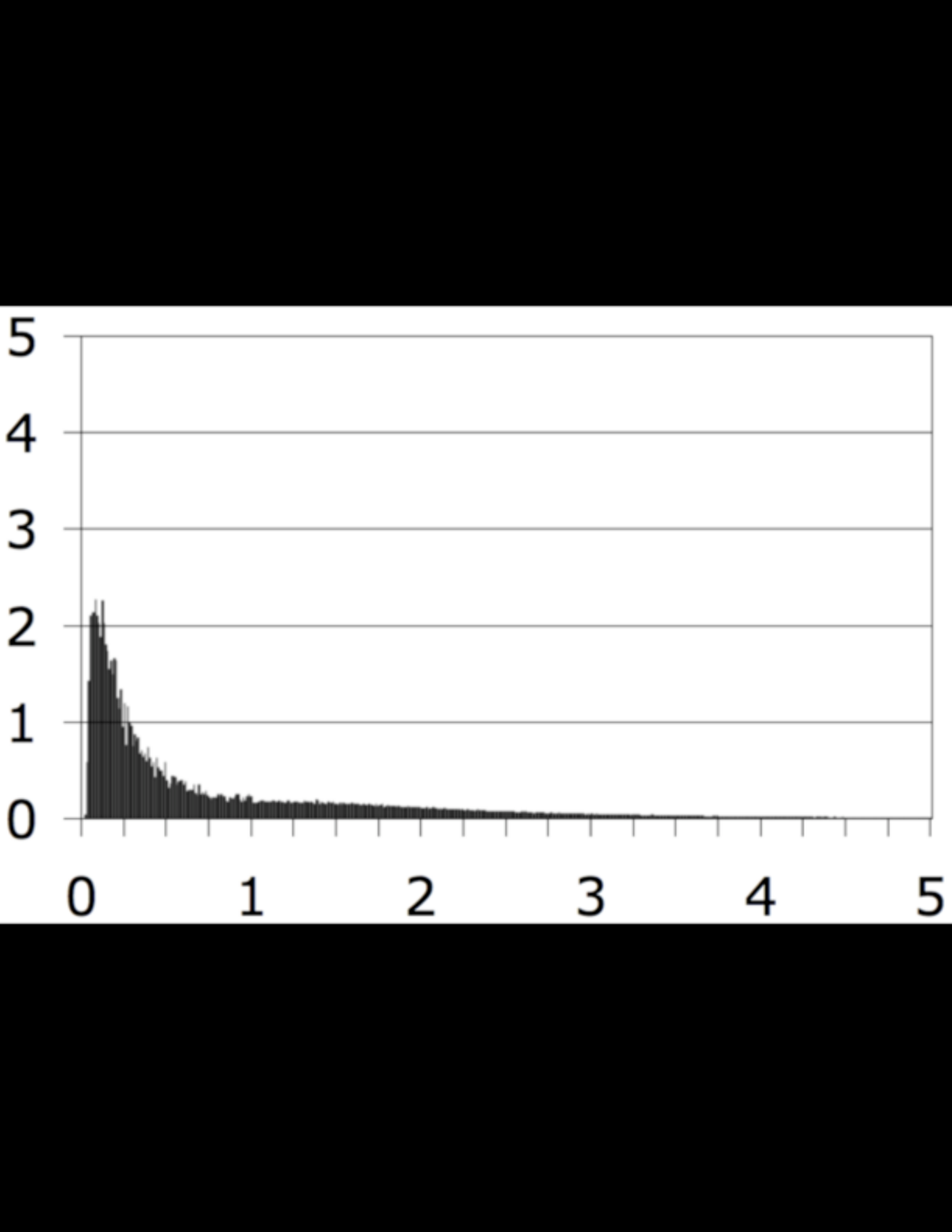}} &
\scalebox{0.17}{\includegraphics*[viewport = 0in 2.8in 8.5in 8.2in]{./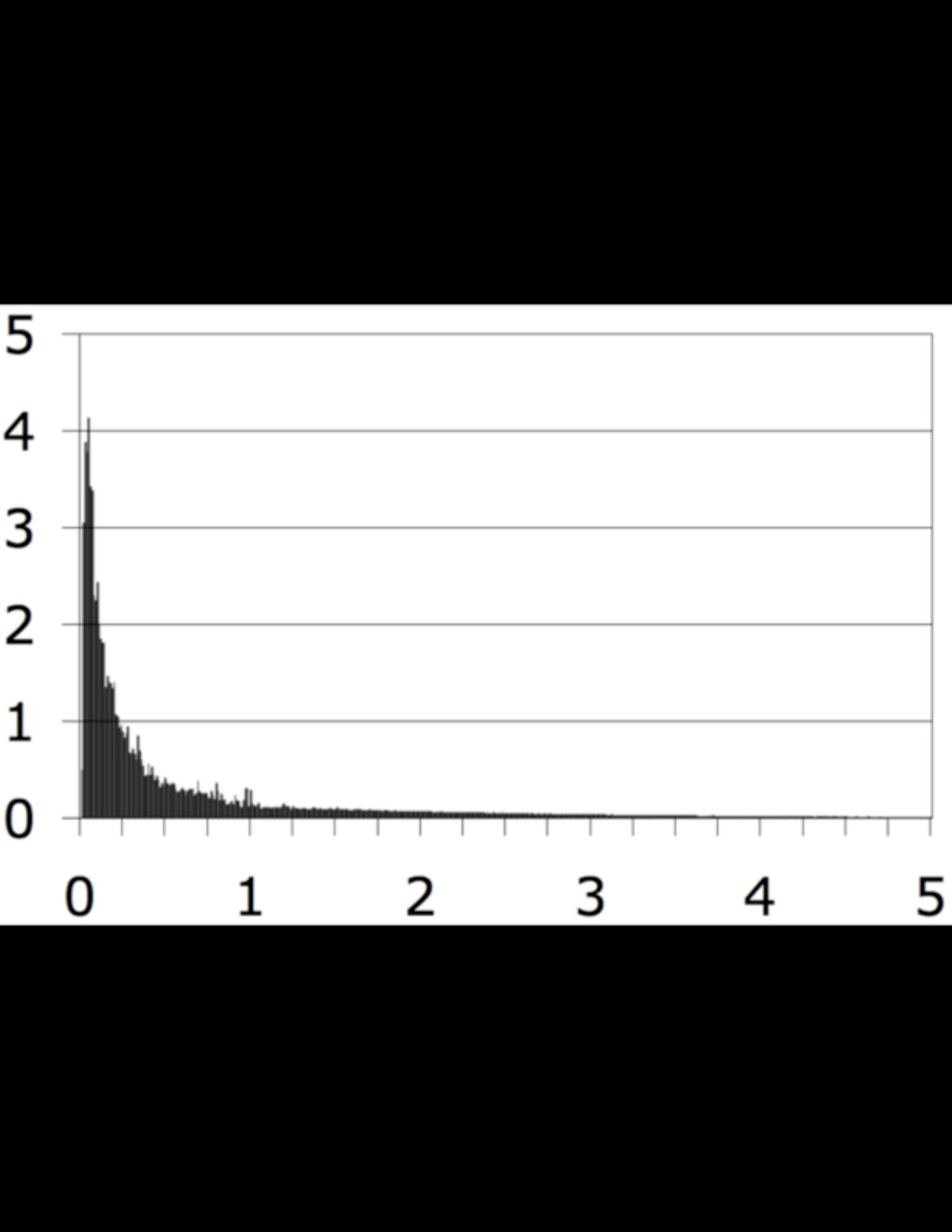}} \\
\end{tabular}
\end{center}
\caption{Histograms approximating the measure ${\mathbb E}(\mu_{v,n})$ for $n$ large.}
\label{floodfig}
\end{figure*}

\begin{table}
\centering
\begin{tabular}{| c | c c c|}
\hline
$\eta$ & Row 5 & Row 20 & Row 50 \\
\hline \hline
.1 & .00035 & .00205 & .00308 \\ 
1 & .00136 & .01235 & .01789 \\
10 & .00024 & .02356 & .03711 \\
\hline
\end{tabular}
\caption{Expected fraction of time during which a large flood (ratio at least 5) occurs.}
\label{largefloodtable}
\end{table}

We now change focus to the parent.  Make the following definition.

\begin{definition}
For any $n \geq 2$, define the \textbf{right catastrophe ratio} 

\[ C_v^R(n) = \frac{I_v(n)}{A_v^R(n)} \textrm{ whenever } D_v^L(n) = 0 \]  
Here, $A_v^R(n) = \frac{T_v^R(n-1)}{N_v^R(n-1)}$, where $N_v^R(n)$ is the number of $i \leq n$ such that $D_v^R(i) = 0$.  For $c \geq 1$, we say that a \textbf{right catastrophe} of order $c$ occurs at time $n$ if $C_v^R(n) \geq c$.  Make similar definitions for left catastrophe ratio and left catastrophe.
\end{definition}

\begin{remark}
From a similar argument to that used in Proposition \ref{floodprop}, we have

\[ \liminf_{n \to \infty} C_v^R(n) < \limsup_{n \to \infty} C_v^R(n) \]
\end{remark}

We investigate the relationship between floods and catastrophes.  Figure \ref{cattofloodfig} shows the expected fraction of right catastrophes from the left parent which result in a flood of at least the same magnitude.  The simulation was performed with a network of width 1000, depth 50, $\eta$ equal to either .1, 1, or 10, and for a duration of $10^5$ seconds.  The calculation of fractions was only made between 9000s and 10000s and we only consider catastrophes with ratio at least 1.  It is clear from the figure that as the row increases, this expectation decreases.  As $\eta$ decreases for a fixed row, the expectation also decreases.  As $n \to \infty$, 

\[ A_v^R(n) = \frac{(n-1)T_v^R(n-1)}{(n-1)N_v^R(n-1)} \to \lim_{n \to \infty} A_v(n) \]
by Remark \ref{rightavgrmk} and Lemma \ref{freqlemma}.  Therefore, we may use Remark \ref{noatomsavgloadrmk} to show that almost surely for large $n$ a right catastrophe of order $c$ occurs for the node $v$ at time $n$ if $v$ has a flood of order $c$ at time $n-1$.  In other words, whenever a node receives a flood of order $c$ at a large time, it has either a left or right catastrophe of the same order at the next second.  

Now we may interpret the probability that a node has a flood given that its left parent has a right catastrophe as the probability that a parent's catastrophe incites a catastrophe in the child.  This would be a step of a possible catastrophe \textit{cascade}.  The simulation results indicate that cascades become less present at lower levels (on average) but that they should never cease to exist.  Two questions naturally arise.  Given that a node has a right catastrophe of order $c$, how far does its catastrophe cascade travel?  At each step of the cascade, the relevant (right or left) catastrophe ratio will generally increase.  Indeed, a child node may even receive a catastrophe from both parents.  How large does this ratio become in a typical cascade?

\begin{figure*}
\begin{center}
\scalebox{0.55}[0.45]{\includegraphics*[viewport = 0in 2.75in 8.5in 8.25in]{./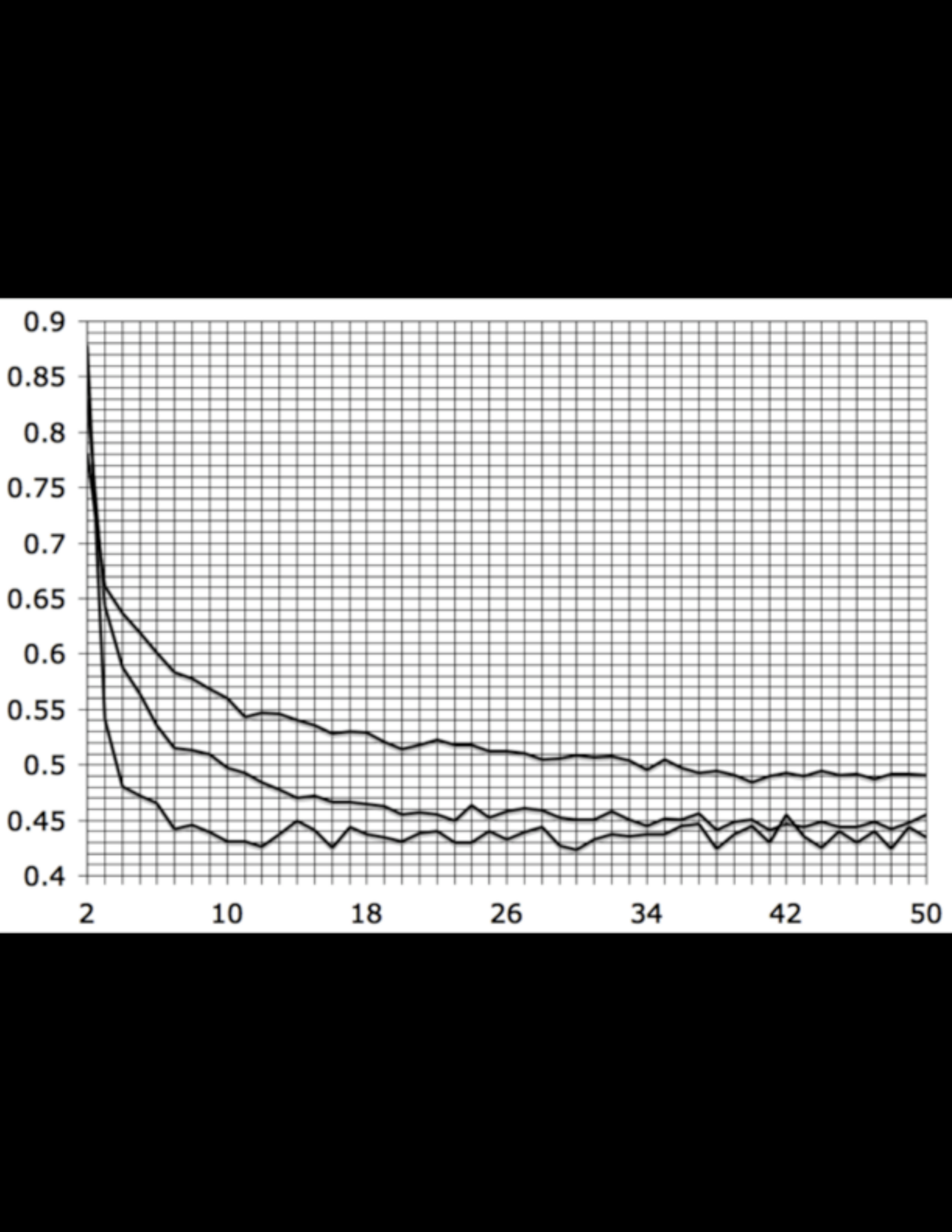}}
\end{center}
\caption{Expected fraction of right catastrophes from left parent which result in a flood of at least the same magnitude.  The values of eta are 10 (top), 1 (middle), and .1 (bottom).  The data are plotted by row.}
\label{cattofloodfig}
\end{figure*}

\section{Conclusion}
We have shown that the erosion model exhibits many properties of rill erosion.  Each node chooses a random initial direction (right or left) in which to send sediment and further such choices become biased at a rate largely determined by the parameter eta.  This is similar to the method by which rills are cut into a hillslope.  As more water and sediment flows through a rill, a channel is cut deeper, giving reinforcement to the path, making it more likely to carry sediment in the future.  Though the dynamics manifests itself through reinforcement, no fixed node can become fully biased (i.e. have a de Finetti measure equal to a sum of two delta masses).  That is, since each node has a non-trivial asymptotic switching rate, sediment flow emerging from it will take both a left and right path a positive fraction of time.  This rate of switching appears to decrease as we move further down the hill.

There are a number of questions which deserve careful analysis.  Do the measures $\theta_k$ have a limit?  If so, one would expect the limit to be the de Finetti measure associated with the "infinity process".  To define this process, we start with a lattice of nodes which extends infinitely far in both positive and negative $y$ directions.  Since the behavior of a node $v$ at time $n$ in the present model depends only on the nodes in the $n-1$ levels above it we may consider the input to the node $v$ at time $n$ in the infinity model to be a function of the output of this finite number of ancestors.  In the same way we have analyzed in this paper, it is possible to show that a de Finetti measure $\theta_{\infty}$ for this process exists and that 

\begin{equation}
\theta_{\infty} = \lim_{n \to \infty} \theta_n(n),
\end{equation}
where the term inside the limit is the measure given by

\[ \theta_n(n) ([a,b]) = {\mathbb P}( P_v^L(n) \in [a,b]) \textrm{ for depth}(v) = n \]
Does the measure $\theta_{\infty}$ have atoms for some values of $\eta$?  If so, is there a critical $\eta_*$ so that for $0 < \eta < \eta_*$, $\theta_{\infty}$ has atoms?  If the limit of $\theta_k$ (assuming it exists) is not the same as $\theta_{\infty}$, does this limit have atoms and is there a critical $\eta$ associated with it?

\vspace{.2in}
\textbf{Acknowledgments}.  We thank Everett Springer for careful reading and comments.  We also thank Charles Newman, Daniel Stein, Emmanuel Schertzer, and Itai Benjamini for very helpful discussions.  Research at NCAR was funded by the National Science Foundation.


\begin{thebibliography}{2}
\bibitem[1]{athreya} Athreya, K.: On a characteristic property of P\'{o}lya's urn.  \textit{Stud. Sci. Math. Hung.} \textbf{4}, 31--35 (1969) 
\bibitem[2]{arratia} Arratia, R.: Coalescing Brownian motions and the voter model on ${\mathbb Z}$, Unpublished partial manuscript (1981), available from rarratia@math.usc.edu.
\bibitem[3]{diaconis} Diaconis, P.: Recent progress on de Finetti's notion of exchangeability.  In J. Bernardo, M. de Groot, D. Lindley, and A. Smith, editors, \textit{Bayesian Statistics}, pp. 111--125. Oxford University Press, Oxford (1988)
\bibitem[4]{polya} Eggenberger, F., P\'{o}lya, G.: \"{U}ber die Statistik Verketter Vorg\"{a}nge. \textit{Z. Angew. Math. Mech.} \textbf{3}, 279--289 (1923)
\bibitem[5]{feller} Feller, W. \textit{Introduction to Probability Theory and Its Applications,} Vol. 2, Wiley, New York.
\bibitem[6]{FINR} Fontes, L., Isopi, M., Newman, C.M., Ravishankar, K.: Coarsening, nucleation and the marked Brownian web.  \textit{Ann. Inst. H. Poincare.} \textbf{42} 37--60 (2006)
\bibitem[7]{FNRS} Fontes, L., Newman, C., Ravishankar, K., Schertzer, E.: The Dynamical Discrete Web. \textit{Arxiv: math.PR/07042706} (2007)
\bibitem[8]{freedman} Freedman, D.: Bernard Friedman's urn.  \textit{Ann. Math. Statist.}, \textbf{36} 956--970 (1965)
\bibitem[9]{friedman} Friedman, B.: A simple urn model. \textit{Commun. Pure Appl. Math.} \textbf{2}, 59--70 (1949)
\bibitem[10]{glock} Glock, W.S.: The development of drainage system -- a synoptic view. \textit{Geograph. Rev.}, \textbf{21}, 475 (1931)
\bibitem[11]{howittwarren} Howitt, C., Warren, J.: Dynamics for the Brownian web and the erosion flow.  \textit{Arxiv: math.PR/0702542}.
\bibitem[12]{kingman} Kingman, J.: Uses of exchangeability. \textit{Annals of Probability} \textbf{2} 183--197 (1978)
\bibitem[13]{CAP} May, C., Paganoni, A., Secchi, P.: On a two-color generalized P\'{o}lya urn.  \textit{International Journal of Statistics} \textbf{63}, 115--134 (2005)
\bibitem[14]{pemantle} Pemantle, R.: A time-dependent version of P\'{o}lya's urn.  \textit{Journal of Theoretical Probability} \textbf{3}, 627--637 (1990)
\bibitem[15]{pemantle2} Pemantle, R.: A survey of random processes with reinforcement.  \textit{Probability Surveys} \textbf{4} 1--79 (2007)
\bibitem[16]{quinton} Quinton, John N.: Erosion and sediment transport, in \textit{Environmental modelling : finding simplicity in complexity}, ed. By John Wainwright and Mark Mulligan, Wiley, Hoboken, NJ, 2004.
\bibitem[17]{RRI} Rodr\'{i}guez-Iturbe, I., Rinaldo, A.: \textit{Fractal River Basins: Chance and Self-Organization}, Cambridge University Press, Cambridge, UK, 1997.
\bibitem[18]{toy} Toy, T.J., Foster, G.R., Renard, K.G.: \textit{Soil Erosion: Processes, Prediction, Measurement, and Control}, John Wiley and Sons, N.Y., 2002.
\bibitem[19]{WBR} Willgoose, G. R., Bras, R. L., and Rodriguez-Iturbe, I.: 1991a. A Physically Based Coupled Network Growth and Hillslope Evolution Model: 1 Theory. WRR, 27(7), 1671--1684.
\end{thebibliography}
\end{document}